\numberwithin{equation}{subsection}
\newtheorem{theo}{Theorem}[section]
\newtheorem{lem}[theo]{Lemma}
\newtheorem{prop}[theo]{Proposition}
\newtheorem{coro}[theo]{Corollary}
\newtheorem{defi}[theo]{Definition}
\newtheorem{rem}[theo]{Remark}
\let\olddefi\defi
\renewcommand{\defi}{\olddefi\normalfont}
\let\oldrem\rem
\renewcommand{\rem}{\oldrem\upshape}
\def\mathrmdef#1{\expandafter\def\csname#1\endcsname{{\rm#1}}}
\def\mathsfdef#1{\expandafter\def\csname#1\endcsname{{\rm\sf#1}}}
\def\mathcaldef#1{\expandafter\def\csname#1\endcsname{{\mathcal#1}}}
 \def\Catt{ \underline{\mathsf{Cat}} }
\def\ran{\mathrm{Ran} }
\def\lan{\mathrm{Lan} }
\def\dd{\mathbbmss{d}}
\def\A{\mathfrak{A}}
\def\b{\mathtt{B}}
\def\cC{\mathtt{C}}
\def\d{\mathtt{D}}
\def\s{\mathtt{S}}
\def\j{\mathrm{j}}
\def\laxdea{\mathrm{lax}\textrm{-}\mathcal{D}\mathrm{esc}
\left(\AAAA \right) }
\def\laxde{\mathrm{lax}\textrm{-}\mathcal{D}\mathrm{esc}}
\def\aaaa{\mathfrak{a}}
\def\bbbb{\mathfrak{b}}
\def\ffff{\mathfrak{f}}
\def\AAAA{\mathcal{A}}
\def\BBBB{\mathcal{B}}
\def\FFFF{\mathcal{F}}
\def\GGGG{\mathcal{G}}
\def\SSSSS{\mathbb{S}}
\def\BBBBB{\mathbb{B}}
\def\CCCCC{\mathbb{C}}
\begin{document}  
	
\ifluatex
\catcode`\^^J=10
\directlua{dofile "dednat6load.lua"}
\else
%
\def\diagxyto{\ifnextchar/{\toop}{\toop/>/}}
\def\to     {\rightarrow}
\def\defded#1#2{\expandafter\def\csname ded-#1\endcsname{#2}}
\def\ifdedundefined#1{\expandafter\ifx\csname ded-#1\endcsname\relax}
\def\ded#1{\ifdedundefined{#1}
    \errmessage{UNDEFINED DEDUCTION: #1}
  \else
    \csname ded-#1\endcsname
  \fi
}
\def\defdiag#1#2{\expandafter\def\csname diag-#1\endcsname{\bfig#2\efig}}
\def\defdiagprep#1#2#3{\expandafter\def\csname diag-#1\endcsname{{#2\bfig#3\efig}}}
\def\ifdiagundefined#1{\expandafter\ifx\csname diag-#1\endcsname\relax}
\def\diag#1{\ifdiagundefined{#1}
    \errmessage{UNDEFINED DIAGRAM: #1}
  \else
    \csname diag-#1\endcsname
  \fi
}
\newlength{\celllower}
\newlength{\lcelllower}
\def\cellfont{}
\def\lcellfont{}
\def\cell #1{\lower\celllower\hbox to 0pt{\hss\cellfont${#1}$\hss}}
\def\lcell#1{\lower\celllower\hbox to 0pt   {\lcellfont${#1}$\hss}}
\def\expr#1{\directlua{output(tostring(#1))}}
\def\eval#1{\directlua{#1}}
\def\pu{\directlua{pu()}}
%

\defdiag{underlyingprecategory}{   
  \morphism(0,0)|m|/->/<1200,0>[{\Catt\left(\mathsf{1},a\right)}`{\Catt\left(\mathsf{2},a\right)};{\Catt\left(s^0,a\right)}]
  \morphism(1200,0)|a|/{@{->}@/_30pt/}/<-1200,0>[{\Catt\left(\mathsf{2},a\right)}`{\Catt\left(\mathsf{1},a\right)};{\Catt\left(d^0,a\right)}]
  \morphism(1200,0)|b|/{@{->}@/^30pt/}/<-1200,0>[{\Catt\left(\mathsf{2},a\right)}`{\Catt\left(\mathsf{1},a\right)};{\Catt\left(d^1,a\right)}]
  \morphism(2400,0)|m|/->/<-1200,0>[{\Catt\left(\mathsf{3},a\right)}`{\Catt\left(\mathsf{2},a\right)};{\Catt\left(D^1,a\right)}]
  \morphism(2400,0)|a|/{@{->}@/_30pt/}/<-1200,0>[{\Catt\left(\mathsf{3},a\right)}`{\Catt\left(\mathsf{2},a\right)};{\Catt\left(D^0,a\right)}]
  \morphism(2400,0)|b|/{@{->}@/^30pt/}/<-1200,0>[{\Catt\left(\mathsf{3},a\right)}`{\Catt\left(\mathsf{2},a\right)};{\Catt\left(D^2,a\right)}]
}
\defdiag{underlyingprecategoryofamonoid}{   
  \morphism(0,0)|m|/->/<1200,0>[{\left\{m\right\}}`{m};{\Sigma{m(s_0)}}]
  \morphism(1200,0)/{@{->}@/_20pt/}/<-1200,0>[{m}`{\left\{m\right\}};]
  \morphism(1200,0)/{@{->}@/^20pt/}/<-1200,0>[{m}`{\left\{m\right\}};]
  \morphism(2400,0)|m|/->/<-1200,0>[{m\times{m}}`{m};{\Sigma{m}(D_1)}]
  \morphism(2400,0)|a|/{@{->}@/_20pt/}/<-1200,0>[{m\times{m}}`{m};{\Sigma{m}(D_0)}]
  \morphism(2400,0)|b|/{@{->}@/^20pt/}/<-1200,0>[{m\times{m}}`{m};{\Sigma{m}(D_2)}]
}
\defdiag{descentunderlyingprecategory}{   
  \morphism(1200,0)|m|/->/<-1200,0>[{\Catt\left(\mathsf{2},a\right)}`{\Catt\left(\mathsf{1},a\right)};{\Catt\left(s^0,a\right)^\ast}]
  \morphism(0,0)|b|/{@{->}@/_30pt/}/<1200,0>[{\Catt\left(\mathsf{1},a\right)}`{\Catt\left(\mathsf{2},a\right)};{\Catt\left(d^1,a\right)^\ast}]
  \morphism(0,0)|a|/{@{->}@/^30pt/}/<1200,0>[{\Catt\left(\mathsf{1},a\right)}`{\Catt\left(\mathsf{2},a\right)};{\Catt\left(d^0,a\right)^\ast}]
  \morphism(1200,0)|m|/->/<1200,0>[{\Catt\left(\mathsf{2},a\right)}`{\Catt\left(\mathsf{3},a\right)};{\Catt\left(D^1,a\right)^\ast}]
  \morphism(1200,0)|b|/{@{->}@/_30pt/}/<1200,0>[{\Catt\left(\mathsf{2},a\right)}`{\Catt\left(\mathsf{3},a\right)};{\Catt\left(D^2,a\right)^\ast}]
  \morphism(1200,0)|a|/{@{->}@/^30pt/}/<1200,0>[{\Catt\left(\mathsf{2},a\right)}`{\Catt\left(\mathsf{3},a\right)};{\Catt\left(D^0,a\right)^\ast}]
}
\defdiag{internalactionsdescentunderlyingprecategory}{   
  \morphism(1200,0)|m|/->/<-1200,0>[{\mathcal{F}\,{a(\mathsf{2})}}`{\mathcal{F}\,{a(\mathsf{1})}};{\mathcal{F}\,{a(s_0)}}]
  \morphism(0,0)|b|/{@{->}@/_30pt/}/<1200,0>[{\mathcal{F}\,{a(\mathsf{1})}}`{\mathcal{F}\,{a(\mathsf{2})}};{\mathcal{F}\,{a(d_1)}}]
  \morphism(0,0)|a|/{@{->}@/^30pt/}/<1200,0>[{\mathcal{F}\,{a(\mathsf{1})}}`{\mathcal{F}\,{a(\mathsf{2})}};{\mathcal{F}\,{a(d_0)}}]
  \morphism(1200,0)|m|/->/<1200,0>[{\mathcal{F}\,{a(\mathsf{2})}}`{\mathcal{F}\,{a(\mathsf{3})}};{\mathcal{F}\,{a(D_1)}}]
  \morphism(1200,0)|b|/{@{->}@/_30pt/}/<1200,0>[{\mathcal{F}\,{a(\mathsf{2})}}`{\mathcal{F}\,{a(\mathsf{3})}};{\mathcal{F}\,{a(D_2)}}]
  \morphism(1200,0)|a|/{@{->}@/^30pt/}/<1200,0>[{\mathcal{F}\,{a(\mathsf{2})}}`{\mathcal{F}\,{a(\mathsf{3})}};{\mathcal{F}\,{a(D_0)}}]
}
\defdiag{pseudofunctorfromdeltatres}{   
  \morphism(1200,0)|m|/->/<-1200,0>[{\AAAA(\mathsf{2})}`{\AAAA(\mathsf{1})};{\AAAA(s^0)}]
  \morphism(0,0)|b|/{@{->}@/_30pt/}/<1200,0>[{\AAAA(\mathsf{1})}`{\AAAA(\mathsf{2})};{\AAAA(d^1)}]
  \morphism(0,0)|a|/{@{->}@/^30pt/}/<1200,0>[{\AAAA(\mathsf{1})}`{\AAAA(\mathsf{2})};{\AAAA(d^0)}]
  \morphism(1200,0)|m|/->/<1200,0>[{\AAAA(\mathsf{2})}`{\AAAA(\mathsf{3})};{\AAAA(D^1)}]
  \morphism(1200,0)|b|/{@{->}@/_30pt/}/<1200,0>[{\AAAA(\mathsf{2})}`{\AAAA(\mathsf{3})};{\AAAA(D^2)}]
  \morphism(1200,0)|a|/{@{->}@/^30pt/}/<1200,0>[{\AAAA(\mathsf{2})}`{\AAAA(\mathsf{3})};{\AAAA(D^0)}]
}
\defdiag{cosimplicial_onetwothree}{   
  \morphism(600,0)|m|/->/<-600,0>[{\mathsf{2}}`{\mathsf{1}};{s^0}]
  \morphism(0,0)|b|/{@{->}@/_20pt/}/<600,0>[{\mathsf{1}}`{\mathsf{2}};{d^1}]
  \morphism(0,0)|a|/{@{->}@/^20pt/}/<600,0>[{\mathsf{1}}`{\mathsf{2}};{d^0}]
  \morphism(600,0)|m|/->/<600,0>[{\mathsf{2}}`{\mathsf{3}};{D^1}]
  \morphism(600,0)|b|/{@{->}@/_20pt/}/<600,0>[{\mathsf{2}}`{\mathsf{3}};{D^2}]
  \morphism(600,0)|a|/{@{->}@/^20pt/}/<600,0>[{\mathsf{2}}`{\mathsf{3}};{D^0}]
}
\defdiag{universaltwocelllaxdescentpsi}{   
  \morphism(450,0)|a|/->/<-450,-375>[{\laxdea}`{\AAAA{(\mathsf{1})}};{\dd^\AAAA}]
  \morphism(450,0)|a|/->/<450,-375>[{\laxdea}`{\AAAA{(\mathsf{1})}};{\dd^\AAAA}]
  \morphism(0,-375)|b|/->/<450,-375>[{\AAAA{(\mathsf{1})}}`{\AAAA{(\mathsf{2})}};{\AAAA(d^1)}]
  \morphism(900,-375)|b|/->/<-450,-375>[{\AAAA{(\mathsf{1})}}`{\AAAA{(\mathsf{2})}};{\AAAA(d^0)}]
  \morphism(225,-375)|a|/=>/<450,0>[{\phantom{O}}`{\phantom{O}};{\uppsi}]
}
\defdiag{universalonecelllaxdescentda}{   
  \morphism(0,0)|a|/->/<675,0>[{\laxdea}`{\AAAA{(\mathsf{1})}};{\dd^\AAAA}]
}
\defdiag{laxdescentassociativityleftside}{   
  \morphism(300,0)|a|/->/<-300,-450>[{\AAAA{(\mathsf{1})}}`{\AAAA{(\mathsf{2})}};{\AAAA(d^1)}]
  \morphism(300,0)|m|/->/<300,-450>[{\AAAA{(\mathsf{1})}}`{\AAAA{(\mathsf{2})}};{\AAAA(d^1)}]
  \morphism(900,0)|a|/->/<-600,0>[{\s}`{\AAAA{(\mathsf{1})}};{F}]
  \morphism(900,0)|r|/->/<300,-450>[{\s}`{\AAAA{(\mathsf{1})}};{F}]
  \morphism(0,-450)|b|/->/<300,-450>[{\AAAA{(\mathsf{2})}}`{\AAAA{(\mathsf{3})}};{\AAAA(D^2)}]
  \morphism(600,-450)|m|/->/<-300,-450>[{\AAAA{(\mathsf{2})}}`{\AAAA{(\mathsf{3})}};{\AAAA(D^1)}]
  \morphism(1200,-450)|m|/->/<-600,0>[{\AAAA{(\mathsf{1})}}`{\AAAA{(\mathsf{2})}};{\AAAA(d^0)}]
  \morphism(1200,-450)|r|/->/<-300,-450>[{\AAAA{(\mathsf{1})}}`{\AAAA{(\mathsf{2})}};{\AAAA(d^0)}]
  \morphism(900,-900)|b|/->/<-600,0>[{\AAAA{(\mathsf{2})}}`{\AAAA{(\mathsf{3})}};{\AAAA(D^0)}]
  \morphism(112,-450)|a|/=>/<375,0>[{\phantom{O}}`{\phantom{O}};{\AAAA(\sigma_{12})}]
  \morphism(548,-225)|a|/=>/<450,0>[{\phantom{O}}`{\phantom{O}};{\beta}]
  \morphism(562,-675)|a|/=>/<450,0>[{\phantom{O}}`{\phantom{O}};{\AAAA(\sigma_{01})}]
}
\defdiag{laxdescentassociativityrightside}{   
  \morphism(900,0)|a|/->/<300,-450>[{\s}`{\AAAA{(\mathsf{1})}};{F}]
  \morphism(900,0)|m|/->/<-300,-450>[{\s}`{\AAAA{(\mathsf{1})}};{F}]
  \morphism(300,0)|a|/<-/<600,0>[{\AAAA{(\mathsf{1})}}`{\s};{F}]
  \morphism(300,0)|l|/->/<-300,-450>[{\AAAA{(\mathsf{1})}}`{\AAAA{(\mathsf{2})}};{\AAAA(d^1)}]
  \morphism(1200,-450)|b|/->/<-300,-450>[{\AAAA{(\mathsf{1})}}`{\AAAA{(\mathsf{2})}};{\AAAA(d^0)}]
  \morphism(600,-450)|m|/->/<300,-450>[{\AAAA{(\mathsf{1})}}`{\AAAA{(\mathsf{2})}};{\AAAA(d^1)}]
  \morphism(0,-450)|m|/<-/<600,0>[{\AAAA{(\mathsf{2})}}`{\AAAA{(\mathsf{1})}};{\AAAA(d^0)}]
  \morphism(0,-450)|l|/->/<300,-450>[{\AAAA{(\mathsf{2})}}`{\AAAA{(\mathsf{3})}};{\AAAA(D^2)}]
  \morphism(300,-900)|b|/<-/<600,0>[{\AAAA{(\mathsf{3})}}`{\AAAA{(\mathsf{2})}};{\AAAA(D^0)}]
  \morphism(712,-450)|a|/=>/<375,0>[{\phantom{O}}`{\phantom{O}};{\beta}]
  \morphism(202,-225)|a|/=>/<450,0>[{\phantom{O}}`{\phantom{O}};{\beta}]
  \morphism(188,-675)|a|/=>/<450,0>[{\phantom{O}}`{\phantom{O}};{\AAAA(\sigma_{02})}]
}
\defdiag{laxdescentidentityleftside}{   
  \morphism(0,0)|a|/->/<525,0>[{\s}`{\AAAA{(\mathsf{1})}};{F}]
  \morphism(0,0)|l|/->/<0,-525>[{\s}`{\AAAA{(\mathsf{1})}};{F}]
  \morphism(525,0)|m|/->/<0,-525>[{\AAAA{(\mathsf{1})}}`{\AAAA{(\mathsf{2})}};{\AAAA(d^0)}]
  \morphism(525,0)/{@{=}@/^22pt/}/<300,-900>[{\AAAA{(\mathsf{1})}}`{\AAAA{(\mathsf{1})}};]
  \morphism(0,-525)|m|/->/<525,0>[{\AAAA{(\mathsf{1})}}`{\AAAA{(\mathsf{2})}};{\AAAA(d^1)}]
  \morphism(0,-525)/{@{=}@/_20pt/}/<825,-375>[{\AAAA{(\mathsf{1})}}`{\AAAA{(\mathsf{1})}};]
  \morphism(525,-525)|m|/->/<300,-375>[{\AAAA{(\mathsf{2})}}`{\AAAA{(\mathsf{1})}};{\AAAA(s^0)}]
  \morphism(150,-712)|a|/{@{=>}@<-3pt>}/<450,0>[{\phantom{O}}`{\phantom{O}};{\AAAA(\mathfrak{n}_1)^{-1}}]
  \morphism(540,-450)|a|/=>/<345,0>[{\phantom{O}}`{\phantom{O}};{\AAAA(\mathfrak{n}_0)}]
  \morphism(90,-262)|a|/=>/<345,0>[{\phantom{O}}`{\phantom{O}};{\beta}]
}
\defdiag{laxdescentidentityrightside}{   
  \morphism(0,0)|r|/{@{->}@/^13pt/}/<0,-900>[{\s}`{\AAAA{(\mathsf{1})}};{F}]
  \morphism(0,0)|l|/{@{->}@/_13pt/}/<0,-900>[{\s}`{\AAAA{(\mathsf{1})}};{F}]
  \morphism(-98,-450)/=/<195,0>[{\phantom{O}}`{\phantom{O}};]
}
\defdiag{equation_on_the_descent_datum_leftside}{   
  \morphism(450,0)|r|/->/<0,-300>[{\s}`{\laxdea};{\check{F}}]
  \morphism(450,-300)|l|/->/<-450,-300>[{\laxdea}`{\AAAA{(\mathsf{1})}};{\dd^{\AAAA}}]
  \morphism(450,-300)|r|/->/<450,-300>[{\laxdea}`{\AAAA{(\mathsf{1})}};{\dd^{\AAAA}}]
  \morphism(0,-600)|l|/->/<450,-300>[{\AAAA{(\mathsf{1})}}`{\AAAA{(\mathsf{2})}};{\AAAA(d^1)}]
  \morphism(900,-600)|r|/->/<-450,-300>[{\AAAA{(\mathsf{1})}}`{\AAAA{(\mathsf{2})}};{\AAAA(d^0)}]
  \morphism(188,-600)|a|/=>/<525,0>[{\phantom{O}}`{\phantom{O}};{\uppsi}]
}
\defdiag{equation_on_the_descent_datum_rightside}{   
  \morphism(450,0)|l|/->/<-450,-600>[{\s}`{\AAAA{(\mathsf{1})}};{F}]
  \morphism(450,0)|r|/->/<450,-600>[{\s}`{\AAAA{(\mathsf{1})}};{F}]
  \morphism(0,-600)|l|/->/<450,-300>[{\AAAA{(\mathsf{1})}}`{\AAAA{(\mathsf{2})}};{\AAAA(d^1)}]
  \morphism(900,-600)|r|/->/<-450,-300>[{\AAAA{(\mathsf{1})}}`{\AAAA{(\mathsf{2})}};{\AAAA(d^0)}]
  \morphism(188,-450)|a|/=>/<525,0>[{\phantom{O}}`{\phantom{O}};{\beta}]
}
\defdiag{equation_two_cell_for_descent_right_side_2}{   
  \morphism(450,-1350)|l|/<-/<-450,450>[{\AAAA{(\mathsf{2})}}`{\AAAA{(\mathsf{1})}};{\AAAA(d^1)}]
  \morphism(450,-1350)|r|/<-/<450,450>[{\AAAA{(\mathsf{2})}}`{\AAAA{(\mathsf{1})}};{\AAAA(d^0)}]
  \morphism(0,-900)|m|/<-/<450,450>[{\AAAA{(\mathsf{1})}}`{\laxdea};{\dd^{\AAAA}}]
  \morphism(900,-900)|m|/<-/<-450,450>[{\AAAA{(\mathsf{1})}}`{\laxdea};{\dd^{\AAAA}}]
  \morphism(450,-450)|m|/<-/<0,450>[{\laxdea}`{\s};{F_1'}]
  \morphism(900,-900)|r|/{@{<-}@/_30pt/}/<-450,900>[{\AAAA{(\mathsf{1})}}`{\s};{\dd^{\AAAA}\,\circ\,F_0'}]
  \morphism(225,-900)|a|/=>/<450,0>[{\phantom{O}}`{\phantom{O}};{\uppsi}]
  \morphism(675,-450)|a|/=>/<300,0>[{\phantom{O}}`{\phantom{O}};{\xi}]
}
\defdiag{equation_two_cell_for_descent_left_side_2}{   
  \morphism(450,-1350)|l|/<-/<-450,450>[{\AAAA{(\mathsf{2})}}`{\AAAA{(\mathsf{1})}};{\AAAA(d^1)}]
  \morphism(450,-1350)|r|/<-/<450,450>[{\AAAA{(\mathsf{2})}}`{\AAAA{(\mathsf{1})}};{\AAAA(d^0)}]
  \morphism(0,-900)|m|/<-/<450,450>[{\AAAA{(\mathsf{1})}}`{\laxdea};{\dd^{\AAAA}}]
  \morphism(900,-900)|m|/<-/<-450,450>[{\AAAA{(\mathsf{1})}}`{\laxdea};{\dd^{\AAAA}}]
  \morphism(450,-450)|m|/<-/<0,450>[{\laxdea}`{\s};{F_0'}]
  \morphism(0,-900)|l|/{@{<-}@/^30pt/}/<450,900>[{\AAAA{(\mathsf{1})}}`{\s};{\dd^{\AAAA}\,\circ\,F_1'}]
  \morphism(225,-900)|a|/=>/<450,0>[{\phantom{O}}`{\phantom{O}};{\uppsi}]
  \morphism(-75,-450)|a|/=>/<300,0>[{\phantom{O}}`{\phantom{O}};{\xi}]
}
\defdiag{xilaxdescenttwocellproperty}{   
  \morphism(0,-675)|l|/{@{<-}@/^30pt/}/<0,675>[{\laxdea}`{\s};{F_1'}]
  \morphism(0,-675)|r|/{@{<-}@/_30pt/}/<0,675>[{\laxdea}`{\s};{F_0'}]
  \morphism(0,-1050)|l|/<-/<0,375>[{\AAAA{(\mathsf{1})}}`{\laxdea};{\dd^{\AAAA}}]
  \morphism(-225,-338)|a|/=>/<450,0>[{\phantom{O}}`{\phantom{O}};{\xi{'}}]
}
\defdiag{definition_of_kan_extension_rightside}{   
  \morphism(0,0)|m|/->/<600,-600>[{\b}`{\cC};{\ran_H\,{J}}]
  \morphism(0,0)|l|/{@{->}@/_30pt/}/<600,-600>[{\b}`{\cC};{R}]
  \morphism(600,0)|a|/->/<-600,0>[{\s}`{\b};{H}]
  \morphism(600,0)|r|/->/<0,-600>[{\s}`{\cC};{J}]
  \morphism(210,-300)|a|/{@{=>}@<18pt>}/<375,0>[{\phantom{O}}`{\phantom{O}};{\gamma}]
  \morphism(75,-300)|a|/{@{=>}@<-18pt>}/<375,0>[{\phantom{O}}`{\phantom{O}};{\beta}]
}
\defdiag{definition_of_kan_extension_leftside}{   
  \morphism(0,0)|m|/->/<600,-600>[{\b}`{\cC};{\ran_H\,{J}}]
  \morphism(0,0)|l|/{@{->}@/_30pt/}/<600,-600>[{\b}`{\cC};{R}]
  \morphism(75,-300)|a|/{@{=>}@<-18pt>}/<375,0>[{\phantom{O}}`{\phantom{O}};{\beta}]
}
\defdiag{preservationofkanextension}{   
  \morphism(0,0)|b|/->/<600,-450>[{\b}`{\cC};{\hat{J}}]
  \morphism(600,0)|a|/->/<-600,0>[{\s}`{\b};{H}]
  \morphism(600,0)|r|/->/<0,-450>[{\s}`{\cC};{J}]
  \morphism(600,-450)|r|/->/<0,-300>[{\cC}`{\d};{G}]
  \morphism(210,-225)|a|/{@{=>}@<13pt>}/<375,0>[{\phantom{O}}`{\phantom{O}};{\gamma}]
}
\defdiag{preservedkanextension}{   
  \morphism(0,0)|b|/->/<600,-600>[{\b}`{\AAAA(\mathsf{1})};{\ran_H\left(\dd{^\AAAA}{J}\right)}]
  \morphism(600,0)|a|/->/<-600,0>[{\s}`{\b};{H}]
  \morphism(600,0)|r|/->/<0,-300>[{\s}`{{\qquad\qquad\laxdea}};{J}]
  \morphism(600,-300)|r|/->/<0,-300>[{{\qquad\qquad\laxdea}}`{\AAAA(\mathsf{1})};{\dd{^\AAAA}}]
  \morphism(600,-600)|r|/->/<0,-300>[{\AAAA(\mathsf{1})}`{\AAAA{(\mathsf{2})}};{\AAAA(d^0)}]
  \morphism(210,-300)|a|/{@{=>}@<13pt>}/<375,0>[{\phantom{O}}`{\phantom{O}};{\nu}]
}
\defdiag{definitionofphileftside}{   
  \morphism(0,0)|l|/->/<0,-900>[{\b}`{\AAAA{(\mathsf{1})}};{\ran_H\left(\dd^\AAAA{J}\right)}]
  \morphism(450,0)|a|/->/<-450,0>[{\s}`{\b};{H}]
  \morphism(450,0)|r|/->/<0,-450>[{\s}`{\laxdea};{J}]
  \morphism(450,-450)|m|/->/<-450,-450>[{\laxdea}`{\AAAA{(\mathsf{1})}};{\dd{^\AAAA}}]
  \morphism(0,-900)|l|/->/<450,-450>[{\AAAA{(\mathsf{1})}}`{\AAAA{(\mathsf{2})}};{\AAAA(d^1)}]
  \morphism(450,-450)|m|/->/<0,-450>[{\laxdea}`{\AAAA{(\mathsf{1})}};{\dd{^\AAAA}}]
  \morphism(450,-900)|r|/->/<0,-450>[{\AAAA{(\mathsf{1})}}`{\AAAA{(\mathsf{2})}};{\AAAA(d^0)}]
  \morphism(38,-225)|a|/=>/<375,0>[{\phantom{O}}`{\phantom{O}};{\nu}]
  \morphism(75,-900)|a|/=>/<300,0>[{\phantom{O}}`{\phantom{O}};{\uppsi}]
}
\defdiag{definitionofphirightside}{   
  \morphism(0,0)|l|/->/<0,-900>[{\b}`{\AAAA{(\mathsf{1})}};{\ran_H\left(\dd^\AAAA{J}\right)}]
  \morphism(1050,0)|a|/->/<-1050,0>[{\s}`{\b};{H}]
  \morphism(1050,0)|r|/->/<0,-450>[{\s}`{\qquad\qquad\laxdea};{J}]
  \morphism(0,0)|m|/->/<1050,-900>[{\b}`{\AAAA{(\mathsf{1})}};{\ran_H\left(\dd^\AAAA{J}\right)}]
  \morphism(0,-900)|m|/->/<1050,-450>[{\AAAA{(\mathsf{1})}}`{\AAAA{(\mathsf{2})}};{\AAAA(d^1)}]
  \morphism(1050,-450)|m|/->/<0,-450>[{\qquad\qquad\laxdea}`{\AAAA{(\mathsf{1})}};{\dd{^\AAAA}}]
  \morphism(1050,-900)|r|/->/<0,-450>[{\AAAA{(\mathsf{1})}}`{\AAAA{(\mathsf{2})}};{\AAAA(d^0)}]
  \morphism(450,-225)|a|/{@{=>}@<-5pt>}/<450,0>[{\phantom{O}}`{\phantom{O}};{\nu}]
  \morphism(112,-675)|a|/=>/<525,0>[{\phantom{O}}`{\phantom{O}};{\varphi}]
}
\defdiag{definitionofvarphilinha}{   
  \morphism(300,0)|a|/->/<-300,-450>[{\AAAA{(\mathsf{1})}}`{\AAAA{(\mathsf{2})}};{\AAAA(d^1)}]
  \morphism(300,0)|m|/->/<300,-450>[{\AAAA{(\mathsf{1})}}`{\AAAA{(\mathsf{2})}};{\AAAA(d^1)}]
  \morphism(1500,0)|r|/->/<300,-450>[{\s}`{\laxdea};{J}]
  \morphism(1500,0)|a|/->/<-600,0>[{\s}`{\b};{H}]
  \morphism(1800,-450)|b|/->/<-600,0>[{\laxdea}`{\AAAA{(\mathsf{1})}};{\dd^\AAAA}]
  \morphism(900,0)|a|/->/<-600,0>[{\b}`{\AAAA{(\mathsf{1})}};{\ran_H\left(\dd^\AAAA{J}\right)}]
  \morphism(900,0)|m|/->/<300,-450>[{\b}`{\AAAA{(\mathsf{1})}};{\ran_H\left(\dd^\AAAA{J}\right)}]
  \morphism(0,-450)|b|/->/<300,-450>[{\AAAA{(\mathsf{2})}}`{\AAAA{(\mathsf{3})}};{\AAAA(D^2)}]
  \morphism(600,-450)|m|/->/<-300,-450>[{\AAAA{(\mathsf{2})}}`{\AAAA{(\mathsf{3})}};{\AAAA(D^1)}]
  \morphism(1200,-450)|m|/->/<-600,0>[{\AAAA{(\mathsf{1})}}`{\AAAA{(\mathsf{2})}};{\AAAA(d^0)}]
  \morphism(1200,-450)|r|/->/<-300,-450>[{\AAAA{(\mathsf{1})}}`{\AAAA{(\mathsf{2})}};{\AAAA(d^0)}]
  \morphism(900,-900)|b|/->/<-600,0>[{\AAAA{(\mathsf{2})}}`{\AAAA{(\mathsf{3})}};{\AAAA(D^0)}]
  \morphism(112,-450)|a|/=>/<375,0>[{\phantom{O}}`{\phantom{O}};{\AAAA(\sigma_{12})}]
  \morphism(502,-225)|a|/=>/<375,0>[{\phantom{O}}`{\phantom{O}};{\varphi}]
  \morphism(562,-675)|a|/=>/<450,0>[{\phantom{O}}`{\phantom{O}};{\AAAA(\sigma_{01})}]
  \morphism(1208,-225)|a|/=>/<435,0>[{\phantom{O}}`{\phantom{O}};{\nu}]
}
\defdiag{definitionofvarphilinhalogoigual}{   
  \morphism(375,-375)|a|/->/<-375,-375>[{\AAAA{(\mathsf{1})}}`{\AAAA{(\mathsf{2})}};{\AAAA(d^1)}]
  \morphism(375,-375)|m|/->/<375,-375>[{\AAAA{(\mathsf{1})}}`{\AAAA{(\mathsf{2})}};{\AAAA(d^1)}]
  \morphism(1125,-375)|a|/->/<-750,0>[{\laxdea}`{\AAAA{(\mathsf{1})}};{\dd^\AAAA}]
  \morphism(1125,-375)|r|/->/<375,-375>[{\laxdea}`{\AAAA{(\mathsf{1})}};{\dd^\AAAA}]
  \morphism(0,-750)|b|/->/<375,-375>[{\AAAA{(\mathsf{2})}}`{\AAAA{(\mathsf{3})}};{\AAAA(D^2)}]
  \morphism(750,-750)|m|/->/<-375,-375>[{\AAAA{(\mathsf{2})}}`{\AAAA{(\mathsf{3})}};{\AAAA(D^1)}]
  \morphism(1500,-750)|m|/->/<-750,0>[{\AAAA{(\mathsf{1})}}`{\AAAA{(\mathsf{2})}};{\AAAA(d^0)}]
  \morphism(1500,-750)|r|/->/<-375,-375>[{\AAAA{(\mathsf{1})}}`{\AAAA{(\mathsf{2})}};{\AAAA(d^0)}]
  \morphism(1125,-1125)|b|/->/<-750,0>[{\AAAA{(\mathsf{2})}}`{\AAAA{(\mathsf{3})}};{\AAAA(D^0)}]
  \morphism(750,0)|a|/->/<-750,0>[{\s}`{\b};{H}]
  \morphism(750,0)|r|/->/<375,-375>[{\s}`{\laxdea};{J}]
  \morphism(0,0)|m|/->/<375,-375>[{\b}`{\AAAA{(\mathsf{1})}};{\ran_H\left(\dd^\AAAA{J}\right)}]
  \morphism(128,-750)|a|/=>/<495,0>[{\phantom{O}}`{\phantom{O}};{\AAAA(\sigma_{12})}]
  \morphism(690,-562)|a|/=>/<495,0>[{\phantom{O}}`{\phantom{O}};{\uppsi}]
  \morphism(690,-938)|a|/=>/<495,0>[{\phantom{O}}`{\phantom{O}};{\AAAA(\sigma_{01})}]
  \morphism(352,-188)|a|/=>/<495,0>[{\phantom{O}}`{\phantom{O}};{\nu}]
}
\defdiag{psiisadescentdatum}{   
  \morphism(1125,-450)|a|/->/<375,-450>[{\laxdea}`{\AAAA{(\mathsf{1})}};{\dd^\AAAA}]
  \morphism(1125,-450)|m|/->/<-375,-450>[{\laxdea}`{\AAAA{(\mathsf{1})}};{\dd^\AAAA}]
  \morphism(375,-450)|a|/<-/<750,0>[{\AAAA{(\mathsf{1})}}`{\laxdea};{\dd^\AAAA}]
  \morphism(375,-450)|l|/->/<-375,-450>[{\AAAA{(\mathsf{1})}}`{\AAAA{(\mathsf{2})}};{\AAAA(d^1)}]
  \morphism(1500,-900)|b|/->/<-375,-450>[{\AAAA{(\mathsf{1})}}`{\AAAA{(\mathsf{2})}};{\AAAA(d^0)}]
  \morphism(750,-900)|m|/->/<375,-450>[{\AAAA{(\mathsf{1})}}`{\AAAA{(\mathsf{2})}};{\AAAA(d^1)}]
  \morphism(0,-900)|m|/<-/<750,0>[{\AAAA{(\mathsf{2})}}`{\AAAA{(\mathsf{1})}};{\AAAA(d^0)}]
  \morphism(0,-900)|l|/->/<375,-450>[{\AAAA{(\mathsf{2})}}`{\AAAA{(\mathsf{3})}};{\AAAA(D^2)}]
  \morphism(375,-1350)|b|/<-/<750,0>[{\AAAA{(\mathsf{3})}}`{\AAAA{(\mathsf{2})}};{\AAAA(D^0)}]
  \morphism(1500,0)|a|/->/<-750,0>[{\s}`{\b};{H}]
  \morphism(1500,0)|r|/->/<-375,-450>[{\s}`{\laxdea};{J}]
  \morphism(750,0)|l|/->/<-375,-450>[{\b}`{\AAAA{(\mathsf{1})}};{\ran_H\left(\dd^\AAAA{J}\right)}]
  \morphism(862,-900)|a|/=>/<525,0>[{\phantom{O}}`{\phantom{O}};{\uppsi}]
  \morphism(300,-675)|a|/=>/<525,0>[{\phantom{O}}`{\phantom{O}};{\uppsi}]
  \morphism(300,-1125)|a|/=>/<525,0>[{\phantom{O}}`{\phantom{O}};{\AAAA(\sigma_{02})}]
  \morphism(675,-225)|a|/=>/<525,0>[{\phantom{O}}`{\phantom{O}};{\nu}]
}
\defdiag{consequence_of_applying_definition_of_varphi_added_step}{   
  \morphism(1875,-300)|a|/->/<750,-450>[{\laxdea}`{\AAAA{(\mathsf{1})}};{\dd^\AAAA}]
  \morphism(1875,-300)|m|/->/<-750,-450>[{\laxdea}`{\AAAA{(\mathsf{1})}};{\dd^\AAAA}]
  \morphism(1125,-750)|m|/<-/<0,450>[{\AAAA{(\mathsf{1})}}`{\b};{\ran_H\left(\dd^\AAAA{J}\right)}]
  \morphism(375,-300)|l|/->/<-375,-450>[{\AAAA{(\mathsf{1})}}`{\AAAA{(\mathsf{2})}};{\AAAA(d^1)}]
  \morphism(1125,-300)/=/<0,300>[{\b}`{\b};]
  \morphism(2625,-750)|b|/->/<-750,-450>[{\AAAA{(\mathsf{1})}}`{\AAAA{(\mathsf{2})}};{\AAAA(d^0)}]
  \morphism(1125,-750)|m|/->/<750,-450>[{\AAAA{(\mathsf{1})}}`{\AAAA{(\mathsf{2})}};{\AAAA(d^1)}]
  \morphism(0,-750)|m|/<-/<1125,0>[{\AAAA{(\mathsf{2})}}`{\AAAA{(\mathsf{1})}};{\AAAA(d^0)}]
  \morphism(0,-750)|l|/->/<375,-450>[{\AAAA{(\mathsf{2})}}`{\AAAA{(\mathsf{3})}};{\AAAA(D^2)}]
  \morphism(375,-1200)|b|/<-/<1500,0>[{\AAAA{(\mathsf{3})}}`{\AAAA{(\mathsf{2})}};{\AAAA(D^0)}]
  \morphism(1875,0)|a|/->/<-750,0>[{\s}`{\b};{H}]
  \morphism(1875,0)|r|/->/<0,-300>[{\s}`{\laxdea};{J}]
  \morphism(1125,-300)|l|/->/<-750,0>[{\b}`{\AAAA{(\mathsf{1})}};{\ran_H\left(\dd^\AAAA{J}\right)}]
  \morphism(1612,-750)|a|/=>/<525,0>[{\phantom{O}}`{\phantom{O}};{\uppsi}]
  \morphism(300,-525)|a|/=>/<525,0>[{\phantom{O}}`{\phantom{O}};{\varphi}]
  \morphism(375,-975)|a|/=>/<750,0>[{\phantom{O}}`{\phantom{O}};{\AAAA(\sigma_{02})}]
  \morphism(1238,-150)|a|/=>/<525,0>[{\phantom{O}}`{\phantom{O}};{\nu}]
}
\defdiag{consequence_of_applying_definition_of_varphi}{   
  \morphism(1350,0)|m|/->/<450,-450>[{\b}`{\AAAA{(\mathsf{1})}};{\ran_H\left(\dd^\AAAA{J}\right)}]
  \morphism(1350,0)|m|/->/<-450,-450>[{\b}`{\AAAA{(\mathsf{1})}};{\ran_H\left(\dd^\AAAA{J}\right)}]
  \morphism(450,0)|a|/<-/<900,0>[{\AAAA{(\mathsf{1})}}`{\b};{\ran_H\left(\dd^\AAAA{J}\right)}]
  \morphism(450,0)|l|/->/<-450,-450>[{\AAAA{(\mathsf{1})}}`{\AAAA{(\mathsf{2})}};{\AAAA(d^1)}]
  \morphism(1800,-450)|b|/->/<-450,-450>[{\AAAA{(\mathsf{1})}}`{\AAAA{(\mathsf{2})}};{\AAAA(d^0)}]
  \morphism(900,-450)|m|/->/<450,-450>[{\AAAA{(\mathsf{1})}}`{\AAAA{(\mathsf{2})}};{\AAAA(d^1)}]
  \morphism(0,-450)|m|/<-/<900,0>[{\AAAA{(\mathsf{2})}}`{\AAAA{(\mathsf{1})}};{\AAAA(d^0)}]
  \morphism(0,-450)|l|/->/<450,-450>[{\AAAA{(\mathsf{2})}}`{\AAAA{(\mathsf{3})}};{\AAAA(D^2)}]
  \morphism(450,-900)|b|/<-/<900,0>[{\AAAA{(\mathsf{3})}}`{\AAAA{(\mathsf{2})}};{\AAAA(D^0)}]
  \morphism(2250,0)|r|/->/<450,-450>[{\s}`{\laxdea};{J}]
  \morphism(2700,-450)|b|/->/<-900,0>[{\laxdea}`{\AAAA{(\mathsf{1})}};{\dd^\AAAA}]
  \morphism(2250,0)|l|/->/<-900,0>[{\s}`{\b};{H}]
  \morphism(1088,-450)|a|/=>/<525,0>[{\phantom{O}}`{\phantom{O}};{\varphi}]
  \morphism(412,-225)|a|/=>/<525,0>[{\phantom{O}}`{\phantom{O}};{\varphi}]
  \morphism(412,-675)|a|/=>/<525,0>[{\phantom{O}}`{\phantom{O}};{\AAAA(\sigma_{02})}]
  \morphism(1762,-225)|a|/=>/<525,0>[{\phantom{O}}`{\phantom{O}};{\nu}]
}
\defdiag{preservedkanextensiondzeroDzero}{   
  \morphism(0,0)|b|/->/<600,-600>[{\b}`{\AAAA(\mathsf{1})};{\ran_H\left(\dd{^\AAAA}{J}\right)}]
  \morphism(600,0)|a|/->/<-600,0>[{\s}`{\b};{H}]
  \morphism(600,0)|r|/->/<0,-300>[{\s}`{{\qquad\qquad\laxdea}};{J}]
  \morphism(600,-300)|r|/->/<0,-300>[{{\qquad\qquad\laxdea}}`{\AAAA(\mathsf{1})};{\dd{^\AAAA}}]
  \morphism(600,-600)|r|/->/<0,-300>[{\AAAA(\mathsf{1})}`{\AAAA{(\mathsf{2})}};{\AAAA(d^0)}]
  \morphism(600,-900)|r|/->/<0,-300>[{\AAAA{(\mathsf{2})}}`{\AAAA{(\mathsf{3})}};{\AAAA(D^0)}]
  \morphism(210,-300)|a|/{@{=>}@<13pt>}/<375,0>[{\phantom{O}}`{\phantom{O}};{\nu}]
}
\defdiag{leftsidevarphiidentity}{   
  \morphism(0,0)|a|/->/<525,0>[{\s}`{\laxdea};{J}]
  \morphism(0,0)|l|/->/<0,-375>[{\s}`{\b};{H}]
  \morphism(525,0)|r|/->/<0,-375>[{\laxdea}`{\AAAA{(\mathsf{1})}};{\dd{^\AAAA}}]
  \morphism(0,-375)|b|/->/<525,0>[{\b}`{\AAAA{(\mathsf{1})}};{\ran_H\left(\dd{^\AAAA}{J}\right)}]
  \morphism(0,-375)|l|/->/<0,-525>[{\b}`{\AAAA{(\mathsf{1})}};{\ran_H\left(\dd{^\AAAA}{J}\right)}]
  \morphism(525,-375)|m|/->/<0,-525>[{\AAAA{(\mathsf{1})}}`{\AAAA{(\mathsf{2})}};{\AAAA(d^0)}]
  \morphism(525,-375)/{@{=}@/^22pt/}/<300,-900>[{\AAAA{(\mathsf{1})}}`{\AAAA{(\mathsf{1})}};]
  \morphism(0,-900)|a|/->/<525,0>[{\AAAA{(\mathsf{1})}}`{\AAAA{(\mathsf{2})}};{\AAAA(d^1)}]
  \morphism(0,-900)/{@{=}@/_20pt/}/<825,-375>[{\AAAA{(\mathsf{1})}}`{\AAAA{(\mathsf{1})}};]
  \morphism(525,-900)|m|/->/<300,-375>[{\AAAA{(\mathsf{2})}}`{\AAAA{(\mathsf{1})}};{\AAAA(s^0)}]
  \morphism(150,-1088)|a|/{@{=>}@<-3pt>}/<450,0>[{\phantom{O}}`{\phantom{O}};{\AAAA(\mathfrak{n}_1)^{-1}}]
  \morphism(540,-825)|a|/=>/<345,0>[{\phantom{O}}`{\phantom{O}};{\AAAA(\mathfrak{n}_0)}]
  \morphism(75,-638)|a|/=>/<375,0>[{\phantom{O}}`{\phantom{O}};{\varphi}]
  \morphism(75,-188)|a|/=>/<375,0>[{\phantom{O}}`{\phantom{O}};{\nu}]
}
\defdiag{rightsidevarphiidentity}{   
  \morphism(0,0)|a|/->/<0,-525>[{\s}`{\b};{H}]
  \morphism(0,0)|a|/->/<525,0>[{\s}`{\laxdea};{J}]
  \morphism(0,-525)|r|/->/<525,0>[{\b}`{\AAAA{(\mathsf{1})}};{\ran_H\left(\dd{^\AAAA}{J}\right)}]
  \morphism(525,0)|a|/->/<525,0>[{\laxdea}`{\AAAA{(\mathsf{1})}};{\dd{^\AAAA}}]
  \morphism(525,0)|m|/->/<0,-525>[{\laxdea}`{\AAAA{(\mathsf{1})}};{\dd{^\AAAA}}]
  \morphism(1050,0)|m|/->/<0,-525>[{\AAAA{(\mathsf{1})}}`{\AAAA{(\mathsf{2})}};{\AAAA(d^0)}]
  \morphism(1050,0)/{@{=}@/^22pt/}/<300,-900>[{\AAAA{(\mathsf{1})}}`{\AAAA{(\mathsf{1})}};]
  \morphism(525,-525)|a|/->/<525,0>[{\AAAA{(\mathsf{1})}}`{\AAAA{(\mathsf{2})}};{\AAAA(d^1)}]
  \morphism(525,-525)/{@{=}@/_20pt/}/<825,-375>[{\AAAA{(\mathsf{1})}}`{\AAAA{(\mathsf{1})}};]
  \morphism(1050,-525)|m|/->/<300,-375>[{\AAAA{(\mathsf{2})}}`{\AAAA{(\mathsf{1})}};{\AAAA(s^0)}]
  \morphism(675,-712)|a|/{@{=>}@<-3pt>}/<450,0>[{\phantom{O}}`{\phantom{O}};{\AAAA(\mathfrak{n}_1)^{-1}}]
  \morphism(1065,-450)|a|/=>/<345,0>[{\phantom{O}}`{\phantom{O}};{\AAAA(\mathfrak{n}_0)}]
  \morphism(600,-262)|a|/=>/<375,0>[{\phantom{O}}`{\phantom{O}};{\uppsi}]
  \morphism(75,-262)|a|/=>/<375,0>[{\phantom{O}}`{\phantom{O}};{\nu}]
}
\defdiag{equation_on_the_descent_datum_leftside_KanExtension}{   
  \morphism(450,0)|r|/->/<0,-300>[{\s}`{\laxdea};{\check{J}}]
  \morphism(450,-300)|l|/->/<-450,-300>[{\laxdea}`{\AAAA{(\mathsf{1})}};{\dd^{\AAAA}}]
  \morphism(450,-300)|r|/->/<450,-300>[{\laxdea}`{\AAAA{(\mathsf{1})}};{\dd^{\AAAA}}]
  \morphism(0,-600)|l|/->/<450,-300>[{\AAAA{(\mathsf{1})}}`{\AAAA{(\mathsf{2})}};{\AAAA(d^1)}]
  \morphism(900,-600)|r|/->/<-450,-300>[{\AAAA{(\mathsf{1})}}`{\AAAA{(\mathsf{2})}};{\AAAA(d^0)}]
  \morphism(188,-600)|a|/=>/<525,0>[{\phantom{O}}`{\phantom{O}};{\uppsi}]
}
\defdiag{equation_on_the_descent_datum_rightside_KanExtension}{   
  \morphism(450,0)|l|/->/<-450,-600>[{\s}`{\AAAA{(\mathsf{1})}};{J}]
  \morphism(450,0)|r|/->/<450,-600>[{\s}`{\AAAA{(\mathsf{1})}};{J}]
  \morphism(0,-600)|l|/->/<450,-300>[{\AAAA{(\mathsf{1})}}`{\AAAA{(\mathsf{2})}};{\AAAA(d^1)}]
  \morphism(900,-600)|r|/->/<-450,-300>[{\AAAA{(\mathsf{1})}}`{\AAAA{(\mathsf{2})}};{\AAAA(d^0)}]
  \morphism(188,-450)|a|/=>/<525,0>[{\phantom{O}}`{\phantom{O}};{\varphi}]
}
\defdiag{omegaforthekanextension}{   
  \morphism(0,0)|l|/->/<600,-600>[{\b}`{\laxdea};{R}]
  \morphism(600,0)|a|/->/<-600,0>[{\s}`{\b};{H}]
  \morphism(600,0)|r|/->/<0,-600>[{\s}`{\laxdea};{J}]
  \morphism(210,-300)|a|/{@{=>}@<13pt>}/<405,0>[{\phantom{O}}`{\phantom{O}};{\omega}]
}
\defdiag{equationKanextensioninordertodefinebetaomega}{   
  \morphism(0,0)|b|/->/<600,-300>[{\b}`{\laxdea};{R}]
  \morphism(600,0)|a|/->/<-600,0>[{\s}`{\b};{H}]
  \morphism(600,0)|r|/->/<0,-300>[{\s}`{\laxdea};{J}]
  \morphism(600,-300)|l|/->/<0,-300>[{\laxdea}`{\AAAA{(\mathsf{1})}};{\dd^\AAAA}]
  \morphism(300,-150)|a|/{@{=>}@<5pt>}/<300,0>[{\phantom{O}}`{\phantom{O}};{\omega}]
}
\defdiag{equationKanextensioninordertodefinebetauniversalproperty}{   
  \morphism(0,0)|m|/->/<600,-600>[{\b}`{\AAAA{(\mathsf{1})}};{\ran_H\left(\dd{^\AAAA}{J}\right)}]
  \morphism(600,0)|a|/->/<-600,0>[{\s}`{\b};{H}]
  \morphism(600,0)|r|/->/<0,-600>[{\s}`{\AAAA{(\mathsf{1})}};{\dd^\AAAA\circ\,{J}}]
  \morphism(0,0)|l|/->/<0,-600>[{\b}`{\laxdea};{R}]
  \morphism(0,-600)|b|/->/<600,0>[{\laxdea}`{\AAAA{(\mathsf{1})}};{\dd^\AAAA}]
  \morphism(262,-300)|a|/{@{=>}@<12pt>}/<375,0>[{\phantom{O}}`{\phantom{O}};{\nu}]
  \morphism(0,-300)|a|/{@{=>}@<-14pt>}/<375,0>[{\phantom{O}}`{\phantom{O}};{\beta}]
}
\defdiag{equationKanextensioninordertodefinebetauniversalpropertyvvlinha}{   
  \morphism(0,0)|m|/->/<600,-300>[{\b}`{{\laxdea}};{\check{J}}]
  \morphism(600,0)|a|/->/<-600,0>[{\s}`{\b};{H}]
  \morphism(600,0)|r|/->/<0,-300>[{\s}`{{\laxdea}};{{J}}]
  \morphism(600,-300)|r|/->/<0,-300>[{{\laxdea}}`{\AAAA{(\mathsf{1})}};{\dd^\AAAA}]
  \morphism(0,0)|l|/->/<0,-600>[{\b}`{\laxdea};{R}]
  \morphism(0,-600)|b|/->/<600,0>[{\laxdea}`{\AAAA{(\mathsf{1})}};{\dd^\AAAA}]
  \morphism(300,-150)|a|/{@{=>}@<5pt>}/<300,0>[{\phantom{O}}`{\phantom{O}};{\tilde{\upnu}}]
  \morphism(52,-300)|a|/{@{=>}@<-10pt>}/<375,0>[{\phantom{O}}`{\phantom{O}};{\beta}]
}
\defdiag{deAomegaintermsofbetanutildeuppsirightside}{   
  \morphism(0,0)|b|/->/<600,-300>[{\b}`{\laxdea};{R}]
  \morphism(600,0)|a|/->/<-600,0>[{\s}`{\b};{H}]
  \morphism(600,0)|r|/->/<0,-300>[{\s}`{\laxdea};{J}]
  \morphism(600,-300)|l|/->/<0,-300>[{\laxdea}`{\AAAA{(\mathsf{1})}};{\dd^\AAAA}]
  \morphism(600,-300)|a|/->/<600,0>[{\laxdea}`{\AAAA{(\mathsf{1})}};{\dd^\AAAA}]
  \morphism(1200,-300)|r|/->/<0,-300>[{\AAAA{(\mathsf{1})}}`{\AAAA{(\mathsf{2})}};{\AAAA(d^0)}]
  \morphism(600,-600)|b|/->/<600,0>[{\AAAA{(\mathsf{1})}}`{\AAAA{(\mathsf{2})}};{\AAAA(d^1)}]
  \morphism(300,-150)|a|/{@{=>}@<5pt>}/<300,0>[{\phantom{O}}`{\phantom{O}};{\omega}]
  \morphism(638,-450)|a|/=>/<525,0>[{\phantom{O}}`{\phantom{O}};{\uppsi}]
}
\defdiag{deAomegaintermsofbetanutildeuppsi}{   
  \morphism(0,0)|m|/->/<600,-300>[{\b}`{{\laxdea}};{\check{J}}]
  \morphism(600,0)|a|/->/<-600,0>[{\s}`{\b};{H}]
  \morphism(600,0)|r|/->/<0,-300>[{\s}`{{\laxdea}};{{J}}]
  \morphism(600,-300)|l|/->/<0,-300>[{{\laxdea}}`{\AAAA{(\mathsf{1})}};{\dd^\AAAA}]
  \morphism(0,0)|l|/->/<0,-600>[{\b}`{\laxdea};{R}]
  \morphism(0,-600)|b|/->/<600,0>[{\laxdea}`{\AAAA{(\mathsf{1})}};{\dd^\AAAA}]
  \morphism(600,-300)|a|/->/<600,0>[{{\laxdea}}`{\AAAA{(\mathsf{1})}};{\dd^\AAAA}]
  \morphism(1200,-300)|r|/->/<0,-300>[{\AAAA{(\mathsf{1})}}`{\AAAA{(\mathsf{2})}};{\AAAA(d^0)}]
  \morphism(600,-600)|b|/->/<600,0>[{\AAAA{(\mathsf{1})}}`{\AAAA{(\mathsf{2})}};{\AAAA(d^1)}]
  \morphism(300,-150)|a|/{@{=>}@<5pt>}/<300,0>[{\phantom{O}}`{\phantom{O}};{\tilde{\upnu}}]
  \morphism(52,-300)|a|/{@{=>}@<-10pt>}/<375,0>[{\phantom{O}}`{\phantom{O}};{\beta}]
  \morphism(638,-450)|a|/=>/<525,0>[{\phantom{O}}`{\phantom{O}};{\uppsi}]
}
\defdiag{rightsidetoseethatbetaisatwocell}{   
  \morphism(0,0)|m|/->/<600,-300>[{\b}`{{\laxdea}};{\check{J}}]
  \morphism(600,0)|a|/->/<-600,0>[{\s}`{\b};{H}]
  \morphism(600,0)|r|/->/<0,-300>[{\s}`{{\laxdea}};{{J}}]
  \morphism(600,-300)|r|/->/<0,-300>[{{\laxdea}}`{\AAAA{(\mathsf{1})}};{\dd^\AAAA}]
  \morphism(0,0)|l|/->/<0,-600>[{\b}`{\laxdea};{R}]
  \morphism(0,-600)|a|/->/<600,0>[{\laxdea}`{\AAAA{(\mathsf{1})}};{\dd^\AAAA}]
  \morphism(0,-900)|b|/->/<600,0>[{\AAAA{(\mathsf{1})}}`{\AAAA{(\mathsf{2})}};{\AAAA(d^1)}]
  \morphism(600,-600)|r|/->/<0,-300>[{\AAAA{(\mathsf{1})}}`{\AAAA{(\mathsf{2})}};{\AAAA(d^0)}]
  \morphism(0,-600)|l|/->/<0,-300>[{\laxdea}`{\AAAA{(\mathsf{1})}};{\dd^\AAAA}]
  \morphism(300,-150)|a|/{@{=>}@<5pt>}/<300,0>[{\phantom{O}}`{\phantom{O}};{\tilde{\upnu}}]
  \morphism(52,-300)|a|/{@{=>}@<-10pt>}/<375,0>[{\phantom{O}}`{\phantom{O}};{\beta}]
  \morphism(38,-750)|a|/=>/<525,0>[{\phantom{O}}`{\phantom{O}};{\uppsi}]
}
\defdiag{leftsideoftheequationforbetalaxdescent}{   
  \morphism(0,0)|a|/->/<600,-300>[{\b}`{{\laxdea}};{\check{J}}]
  \morphism(600,-300)|l|/->/<0,-300>[{{\laxdea}}`{\AAAA{(\mathsf{1})}};{\dd^\AAAA}]
  \morphism(0,0)|l|/->/<0,-600>[{\b}`{\laxdea};{R}]
  \morphism(0,-600)|b|/->/<600,0>[{\laxdea}`{\AAAA{(\mathsf{1})}};{\dd^\AAAA}]
  \morphism(600,-300)|a|/->/<600,0>[{{\laxdea}}`{\AAAA{(\mathsf{1})}};{\dd^\AAAA}]
  \morphism(1200,-300)|r|/->/<0,-300>[{\AAAA{(\mathsf{1})}}`{\AAAA{(\mathsf{2})}};{\AAAA(d^0)}]
  \morphism(600,-600)|b|/->/<600,0>[{\AAAA{(\mathsf{1})}}`{\AAAA{(\mathsf{2})}};{\AAAA(d^1)}]
  \morphism(52,-300)|a|/{@{=>}@<-10pt>}/<375,0>[{\phantom{O}}`{\phantom{O}};{\beta}]
  \morphism(638,-450)|a|/=>/<525,0>[{\phantom{O}}`{\phantom{O}};{\uppsi}]
}
\defdiag{rightsideoftheequationforbetalaxdescent}{   
  \morphism(0,0)|a|/->/<600,-300>[{\b}`{{\laxdea}};{\check{J}}]
  \morphism(600,-300)|r|/->/<0,-300>[{{\laxdea}}`{\AAAA{(\mathsf{1})}};{\dd^\AAAA}]
  \morphism(0,0)|l|/->/<0,-600>[{\b}`{\laxdea};{R}]
  \morphism(0,-600)|a|/->/<600,0>[{\laxdea}`{\AAAA{(\mathsf{1})}};{\dd^\AAAA}]
  \morphism(0,-900)|b|/->/<600,0>[{\AAAA{(\mathsf{1})}}`{\AAAA{(\mathsf{2})}};{\AAAA(d^1)}]
  \morphism(600,-600)|r|/->/<0,-300>[{\AAAA{(\mathsf{1})}}`{\AAAA{(\mathsf{2})}};{\AAAA(d^0)}]
  \morphism(0,-600)|l|/->/<0,-300>[{\laxdea}`{\AAAA{(\mathsf{1})}};{\dd^\AAAA}]
  \morphism(52,-300)|a|/{@{=>}@<-10pt>}/<375,0>[{\phantom{O}}`{\phantom{O}};{\beta}]
  \morphism(38,-750)|a|/=>/<525,0>[{\phantom{O}}`{\phantom{O}};{\uppsi}]
}
\defdiag{omegaforthekanextensionlinha}{   
  \morphism(0,0)|l|/->/<600,-600>[{\b}`{\laxdea};{R}]
  \morphism(600,0)|a|/->/<-600,0>[{\s}`{\b};{H}]
  \morphism(600,0)|r|/->/<0,-600>[{\s}`{\laxdea};{J}]
  \morphism(210,-300)|a|/{@{=>}@<13pt>}/<405,0>[{\phantom{O}}`{\phantom{O}};{\omega}]
}
\defdiag{kan_extension_rightside_prova_nocasodenu}{   
  \morphism(0,0)|m|/->/<600,-600>[{\b}`{\laxdea};{\check{J}}]
  \morphism(0,0)|l|/{@{->}@/_30pt/}/<600,-600>[{\b}`{\laxdea};{R}]
  \morphism(600,0)|a|/->/<-600,0>[{\s}`{\b};{H}]
  \morphism(600,0)|r|/->/<0,-600>[{\s}`{\laxdea};{J}]
  \morphism(210,-300)|a|/{@{=>}@<18pt>}/<375,0>[{\phantom{O}}`{\phantom{O}};{\tilde{\upnu}}]
  \morphism(75,-300)|a|/{@{=>}@<-18pt>}/<375,0>[{\phantom{O}}`{\phantom{O}};{\tilde{\upbeta}}]
}
\defdiag{eqpinternalgroupoid}{   
  \morphism(0,0)/->/<1200,0>[{e}`{e\times_be};]
  \morphism(1200,0)|a|/{@{->}@/_20pt/}/<-1200,0>[{e\times_be}`{e};{\pi^e}]
  \morphism(1200,0)|b|/{@{->}@/^20pt/}/<-1200,0>[{e\times_be}`{e};{\pi_e}]
  \morphism(2400,0)/->/<-1200,0>[{e\times_be\times_be}`{e\times_be};]
  \morphism(2400,0)/{@{->}@/_20pt/}/<-1200,0>[{e\times_be\times_be}`{e\times_be};]
  \morphism(2400,0)/{@{->}@/^20pt/}/<-1200,0>[{e\times_be\times_be}`{e\times_be};]
}
\defdiag{equation_on_the_descent_datum_leftsideforthecaseofpseudofunctor}{   
  \morphism(600,0)|r|/->/<0,-375>[{\FFFF(b)}`{\laxde\left(\FFFF^p\right)};{K_{p}}]
  \morphism(600,-375)|l|/->/<-600,-450>[{\laxde\left(\FFFF^p\right)}`{\FFFF^p(\mathsf{1})=\FFFF(e)};{\dd^{\FFFF^p}}]
  \morphism(600,-375)|r|/->/<600,-450>[{\laxde\left(\FFFF^p\right)}`{\FFFF(e)=\FFFF^p(\mathsf{1})};{\dd^{\FFFF^p}}]
  \morphism(0,-825)|l|/->/<600,-450>[{\FFFF^p(\mathsf{1})=\FFFF(e)}`{\FFFF(e\times_be)=\FFFF^p(\mathsf{2})};{\FFFF^p(d^1)=\FFFF(\pi_e)}]
  \morphism(1200,-825)|r|/->/<-600,-450>[{\FFFF(e)=\FFFF^p(\mathsf{1})}`{\FFFF(e\times_be)=\FFFF^p(\mathsf{2})};{\FFFF(\pi^e)=\FFFF^p(d^0)}]
  \morphism(412,-825)|a|/=>/<375,0>[{\phantom{O}}`{\phantom{O}};{\uppsi}]
}
\defdiag{equation_on_the_descent_datum_rightsideforthecaseofthepseudofunctor}{   
  \morphism(525,0)|l|/->/<-525,-825>[{\FFFF(b)}`{\FFFF(e)};{\FFFF(p)}]
  \morphism(525,0)|r|/->/<525,-825>[{\FFFF(b)}`{\FFFF(e)};{\FFFF(p)}]
  \morphism(0,-825)|l|/->/<525,-450>[{\FFFF(e)}`{\FFFF(e\times_be)};{\FFFF(\pi_e)}]
  \morphism(1050,-825)|r|/->/<-525,-450>[{\FFFF(e)}`{\FFFF(e\times_be)};{\FFFF(\pi^e)}]
  \morphism(525,0)|m|/->/<0,-1275>[{\FFFF(b)}`{\FFFF(e\times_be)};{\FFFF(\pi_e\cdot{p})=\FFFF(\pi^e\cdot{p})}]
  \morphism(562,-638)|a|/{@{=>}@<-23pt>}/<375,0>[{\phantom{O}}`{\phantom{O}};{\ffff_{{}_{\pi^e\,{p}}}^{-1}}]
  \morphism(112,-638)|a|/{@{=>}@<-23pt>}/<375,0>[{\phantom{O}}`{\phantom{O}};{\ffff_{{}_{\pi_e\,{p}}}}]
}

\def\pu{}
\fi


\title{Descent data and absolute Kan extensions} 
\author{Fernando Lucatelli Nunes}
\address{Department of Information and Computing Sciences, Utrecht University}
\eaddress{f.lucatellinunes@uu.nl}
\amsclass{18N10, 18C15, 18C20, 18F20, 18A22, 18A30, 18A40}

\keywords{descent theory, effective descent morphisms, internal actions, indexed categories,   creation of absolute Kan extensions, B\'{e}nabou-Roubaud theorem,  monadicity theorem}

\thanks{This research was partially supported by the Institut de Recherche en Math\'{e}matique et Physique (IRMP, UCLouvain, Belgium), and by the Centre for Mathematics of the University of Coimbra - UIDB/00324/2020, funded by the Portuguese Government through FCT/MCTES.}

\maketitle 

\begin{abstract}
The fundamental construction underlying descent theory, 
the lax descent category, comes with a functor that forgets the 
\textit{descent data}. We prove that,
in any $2$-category $\A $ with lax descent objects, the
forgetful morphisms
 create all Kan extensions that are preserved by certain morphisms.  As  a consequence, in the case $\A = \Cat $,
  we get a \textit{monadicity theorem} which says that a right adjoint functor is monadic if it is, up  to the composition with an equivalence, (naturally isomorphic to) a functor that forgets descent data. In particular, within the classical context    of \textit{descent theory}, we show that, in a fibred category, 
  the forgetful functor between the category of internal actions of a precategory $a$
  and the category of internal actions of the underlying discrete precategory is monadic if and only if it has a left adjoint. More particularly, this shows that 
  one of the
implications of the celebrated B\'{e}nabou-Roubaud theorem 
does not depend on the 
so called Beck-Chevalley condition. Namely, we prove that, in indexed categories, 
whenever an effective descent morphism induces a right adjoint functor, the induced functor is  monadic.
\end{abstract}

\tableofcontents  
\setcounter{secnumdepth}{-1}
\section{Introduction}

The various notions of \textit{descent objects}, the $2$-dimensional limits underlying \textit{descent theory}, can be seen as $2$-dimensional analogues
of the equalizer. While equalizers encompass equality and
commutativity of diagrams in $1$-dimensional category theory,
the  (lax) descent objects encompass $2$-dimensional 
coherence: morphism (or $2$-cell) plus
coherence equations. 

For this reason, results on the (lax) descent objects 
 usually shed light on a wide
range of situations, including, of course, Grothendieck descent theory (\textit{e.g.}~\cite{MR1466540, MR2107402, 2016arXiv160604999L}), Janelidze-Galois theory~\cite{MR2073649}, algebraic topology~\cite{MR1453515}, two-dimensional monad theory (\textit{e.g.}~\cite{MR1935980, MR3491845}), and two-dimensional category theory (\textit{e.g.}~\cite{2019arXiv190201225L}).

As shown in \cite{MR1466540}, in the classical case of the $2$-category 
$\Cat $ of categories,
 \textit{internal category theory}
provides a useful perspective to introduce 
\textit{descent theory} or, more particularly, the lax descent category.
The lax descent category can be seen as a generalization of the $2$-functor 
$$\mathsf{Mon}(\Set)^\op \to \Cat, \quad m\mapsto m\textrm{-}\Set $$
in which $\mathsf{Mon}(\Set)$ denotes the usual category of monoids (of the cartesian monoidal category $\Set $),
and $m\textrm{-}\Set$ is the category of sets endowed with actions of the monoid $m$, usually called
$m$-sets.

Recall that every small category $a $ (internal category in $\Set $) has an underlying
truncated simplicial set, called the underlying \textit{precategory} 
$$\Catt (\j - , a)  : \Delta _ {\mathrm{3}}^\op\to \Set $$
%
\pu
\begin{equation*}
\diag{underlyingprecategory}
\end{equation*}
in which, denoting by $\Delta $ the category
 of the finite non-empty ordinals and order preserving
functions,
 $\j : \Delta _ {\mathrm{3}}\to \Cat $ is the usual inclusion
given by the composition of the inclusions 
$\Delta _ {\mathrm{3}}\to \Delta \to \Cat $.

It is well known that there is a fully faithful functor
$\Sigma : \mathsf{Mon}(\Set)\to \Cat (\Set ) $ between the category of monoids (internal monoids in $\Set $)
and the category of small categories (internal categories in $\Set $) that
associates each monoid with the corresponding single object category. The underlying precategory of $\Sigma m $
is given by
$$\Sigma m : \Delta _ {\mathrm{3}}^\op\to \Set $$
%
\pu
\begin{equation*}
\diag{underlyingprecategoryofamonoid}
\end{equation*}
in which $m$ is the underlying set of the monoid,  $\left\{m \right\}$ is the singleton with $m$ as element, $\Sigma m(D_2), \Sigma m(D_0): m\times m \to m  $
are the two product projections, $\Sigma m(D_1)$ is the operation of the monoid, 
and $\Sigma m(s_0)$ gives the unit.
In this context, the objects and morphisms of the
category $m\textrm{-}\Set $ 
 can be described internally in $\Set $ as follows.

 Since $\Set $ has pullbacks, 
 we can consider the (basic) \textit{indexed category}, that is to say, the pseudofunctor
\begin{eqnarray*}
\Set /- : &\Set ^\op &\to \Cat\\ 
        &   w       & \mapsto \Set /w  \\
        &   f      &	\mapsto f^\ast
\end{eqnarray*}	
in which $ \Set /w $ denotes the comma category,
and 
$f^\ast $ denotes the \textit{change of base functor}
(given by the pullback along $f$).

An $m$-set is a set $w$ endowed with an endomorphism 
$\xi  $
of the projection $\textrm{proj}_{m} : m\times w\to m $ in the  comma category $\Set / m $, subject to
the equations
$$  \mathfrak{p}\cdot m(s_0)^\ast (\xi )\cdot \mathfrak{p} = \id _{\Set }, \quad 
m(D_0)^\ast (\xi )\cdot \mathfrak{p}\cdot  m(D_2)^\ast (\xi )
= \mathfrak{p}\cdot m(D_1)^\ast (\xi )\cdot \mathfrak{p}
$$ 
in which, by abuse of language, we denote by $\mathfrak{p}$ the appropriate 
\textit{canonical isomorphisms} given by the pseudofunctor $\Set /- $ 
(induced by the universal properties of the pullbacks in each case). 
These equations 
correspond to the identity and associativity equations
for the action.
The morphisms $(w, \xi)\to (w', \xi ') $ of $m$-sets are morphisms 
(functions) $w\to w'$
 between the underlying sets respecting the structures $\xi $ and $\xi ' $.

This viewpoint gives $m\textit{-}\Set $ precisely as the 
\textit{lax descent category} of the composition of 
$\op\left(\Sigma m\right) : \Delta _ {\mathrm{3}}\to \Set ^\op $  
with the pseudofunctor $\Set /- : \Set ^\op\to\Cat $.
More generally, given a small category $a $, 
the lax descent category (see Definition \ref{definitionlaxdescentcategory}) of
%
\pu
\begin{equation*}
	\diag{descentunderlyingprecategory}
\end{equation*}
is equivalent to the category  
$\Cat \left[ a , \Set \right] $
of functors $a\to \Set $ and natural transformations,
that is to say, the category of \textit{actions
of the small category $a$ in $\Set$}.

In order to reach the level of abstraction of \cite{MR1466540}, firstly it 
should be noted that the definitions above can be 
considered in any category $\mathbb{C} $
with pullbacks, using the basic
indexed category $\mathbb{C}/- : \mathbb{C}^\op\to\Cat $. That is to say,
we get the (basic) internal notion
of the category of actions 
$a\to \mathbb{C} $ for each internal category $a $.
Secondly, we can replace the pseudofunctor
$\mathbb{C}/-  $
by any other pseudofunctor (indexed category)
$\mathcal{F} : \mathbb{C}^\op\to \Cat $ 
 of interest. By definition, given an internal
 (pre)category $a : \Delta _ \mathrm{3} ^\op \to \mathbb{C} $ of $\mathbb{C}$, the lax descent
 category of
%
\pu
\begin{equation*}
\diag{internalactionsdescentunderlyingprecategory}
\end{equation*}
is the category of \textit{$\mathcal{F}$-internal actions}
of $a$ in $\mathbb{C}$.

Recall that, if $\mathbb{C}$ has pullbacks, given a 
morphism $p: e\to b $, the kernel pair induces a precategory
which is actually the underlying precategory of an \textit{internal groupoid} of $\mathbb{C} $, denoted herein by $\mathsf{Eq}(p) $. 
Following the definition,
 given any pseudofunctor $\FFFF : \mathbb{C} ^\op\to \Cat $,
we have that the category of $\FFFF$-internal actions of  $\mathsf{Eq}(p) $
is given by the lax descent category 
$\mathrm{lax}\textrm{-}\mathcal{D}\mathrm{esc} \left( \FFFF\cdot \op \left(\mathsf{Eq}(p)\right)\right)$. 
In this case, the universal property of the lax descent category induces a factorization 
\begin{equation*}\tag{$\FFFF$-descent factorization of $\FFFF (p) $}
\label{introductiondescentfactorization}
\xymatrix{
\FFFF (b)
\ar[rr]|-{\FFFF (p) }
\ar[rd]
&&
\FFFF (e)
\\
&
\mathrm{lax}\textrm{-}\mathcal{D}\mathrm{esc}  \left( \FFFF\cdot \op \left(\mathsf{Eq}(p)\right)\right)
\ar[ru]
&
}
\end{equation*}
in which $\mathrm{lax}\textrm{-}\mathcal{D}\mathrm{esc} \left( \FFFF\,  \mathsf{Eq}(p) \right)\to \FFFF (e) $ is the forgetful functor that forgets
descent data (see, for instance,  \cite[Section~3]{MR1466540} or, more appropriately to our context, Lemma \ref{factorizationdescentfacets} below). 

In this setting, B\'{e}nabou and Roubaud~\cite{MR0255631} showed that, if $\FFFF : \mathbb{C} ^\op\to \Cat $
comes from a bifibration satisfying the so called \textit{Beck-Chevalley condition} (see, for instance, \cite[Section~7]{2016arXiv160604999L} or Section~\ref{sectionfinale} below), then 
the \ref{introductiondescentfactorization}
is equivalent to the Eilenberg-Moore factorization of the adjunction $\FFFF (p)!\dashv \FFFF (p) $. In particular, in this case, 
$\FFFF (p) $ is monadic if and only if $p $ is of 
\textit{effective $\FFFF $-descent}
(which means that $\FFFF (b)\to \mathrm{lax}\textrm{-}\mathcal{D}\mathrm{esc} \left( \FFFF\cdot \op \left(\mathsf{Eq}(p)\right)\right)$ is an equivalence).

The main result of the present paper is 
 within the general context of 
the  lax descent object of a truncated
pseudocosimplicial object inside a $2$-category $\A $. More precisely, our main theorem says that, for any given truncated pseudocosimplicial
object $$\AAAA : \Delta _\mathrm{3}\to \A $$ 
%
\pu
\begin{equation*}
	\diag{pseudofunctorfromdeltatres}
\end{equation*}
the \textit{forgetful morphism}  $ \dd ^\AAAA : \laxdea \to \AAAA   $  creates
 the right Kan extensions that are preserved  
by   $\AAAA (d^0)$ and
$\AAAA (D^0)\cdot \AAAA (d^0)$.
In particular, such forgetful morphisms create \textit{absolute Kan extensions}. If 
$\A = \Cat $, we get in particular that the functor  $ \dd ^\AAAA : \laxdea \to \AAAA   $
that forgets descent data creates
 absolute limits and colimits.

The main theorem implies that,
given any pseudofunctor $\FFFF : \mathbb{C} ^\op
\to\Cat $,  the 
forgetful functor 
$$\mathrm{lax}\textrm{-}\mathcal{D}\mathrm{esc} \left( \FFFF\,\circ\, \op (a) \right)\to\mathcal{F} \, a(\mathsf{1}) $$
between the $\FFFF$-internal actions of a precategory 
$a : \Delta _ {\mathrm{3} } ^\op\to \mathbb{C} $
 and the category of internal actions of the \textit{underlying discrete precategory} of 
$a$ creates absolute limits and colimits. This generalizes the
fact that, if $a$ is actually a small category, 
the forgetful functor (restriction functor)  
$$\Cat \left[ a , \Set \right]\to  \Cat \left[ 
\overline{a (\mathsf{1})} , \Set \right] $$
creates absolute limits and colimits,
in which, by abuse of language, $\overline{a (\mathsf{1})}$
denotes the \textit{underlying discrete category} of $a$ (see, for instance, \cite[Proposition~2.21]{MR0470019}).

As a particular case of this conclusion, given any indexed category 
$\FFFF : \mathbb{C} ^\op\to \Cat ,$ whenever $p $ is of effective $\FFFF $-descent,
$\FFFF (p) $ creates absolute limits and colimits. 
Therefore, by Beck's monadicity theorem,
assuming that $\FFFF (p) $ has a left adjoint, 
if $p $ is of effective $\FFFF $-descent then 
$\FFFF (p) $ is monadic (Theorem \ref{MONADIC} and Theorem \ref{consequenceofthemaintheoreminthecaseofGrothendieckdescent}). 

This shows that, if  $\FFFF $ comes from a bifibration,
one of the 
implications of the B\'{e}nabou-Roubaud theorem 
\textit{does not depend} on the 
 Beck-Chevalley condition. Namely, \textit{in a bifibred category with pullbacks, 
effective descent morphisms always induce monadic functors}. 

It should be observed that it is known that,  without assuming the Beck-Chevalley condition,
monadicity of $\FFFF (p) $ does not imply that
$p$ is of effective $\FFFF$-descent. This is shown
for instance in \cite[Remark~7]{MR2107401},
where Sobral, considering the indexed category $\cat^\op\to\Cat $
of op-fibrations in the category of small categories,
provides an example of a morphism
that is not of effective descent but does induce 
a monadic functor.

In Section \ref{Sectionofthelaxdescentcategory}, we briefly give the basic definition of the lax descent category, and present the corresponding definition for a general $2$-category. Namely,  a $2$-dimensional limit called \textit{the lax descent object} (see \cite[Section~5]{MR0401868} or, for pseudofunctors, \cite[Section~3]{2016arXiv160703087L}). We
mostly follow the approach of \cite[Section~2]{2019arXiv190201225L} except for starting with
pseudofunctors $(\AAAA , \aaaa): \Delta _{\mathrm{3}}\to\A $ instead of using 
a strict replacement of the domain.

In Section \ref{Sectionforgetfulmorphismsandkanextensions}, we establish our main theorems 
on 
the \textit{morphisms that forget descent data}. In order to do so, we start by recalling the definitions 
on Kan extensions  inside a $2$-category (\textit{e.g.} \cite[Section~2]{MR0463261}). Then, we prove Theorem \ref{maintheorem} and show the main consequences,
including a monadicity theorem (Theorem \ref{MONADIC}). 
We also show how Theorem \ref{MONADIC}
and the monadicity theorem of \cite[Section~5]{2019arXiv190201225L} implies in a new monadicity characterization in Remark \ref{monadicityinCat} (Theorem \ref{monadicityinCat2}). It says that  \textit{a right adjoint functor is monadic if, and only if, it is a functor that forgets descent data composed with an equivalence}.

Section \ref{Sectiondescenttheory} establishes the setting of \textit{Grothendieck descent theory}~\cite{MR1285884, 2016arXiv160604999L}. The main aim of the section is to establish Lemma \ref{factorizationdescentfacets} in order to recover 
the usual \textit{descent factorization} (see, for instance, \cite[Section~3]{MR1466540})  directly 
via the universal property of \textit{lax descent category}.

In Section \ref{sectionfinale}, we give 
aspects of the relation between monadicity and effective descent morphisms in the context of \cite{MR0255631,  MR1285884, 2016arXiv160604999L}. We recall the \textit{Beck-Chevalley condition} and the B\'{e}nabou-Roubaud theorem. We discuss examples of non-effective descent morphisms
inducing monadic functors. Finally, we also establish and discuss  the main consequences of our 
Theorem \ref{maintheorem} in this context, including the result that, \textit{in bifibred categories, effective descent morphisms always induce monadic functors}, even without satisfying the Beck-Chevalley condition.

\setcounter{secnumdepth}{5}

\section{The lax descent category}\label{Sectionofthelaxdescentcategory}
Let $\Cat $ be the cartesian closed category of categories in some universe. We denote the \textit{internal hom}  by 
$$\Cat[-,-]: \Cat ^\op\times \Cat\to \Cat ,$$
which of course is a $2$-functor ($\Cat $-enriched functor). Moreover, we denote by $$\Catt (-,-) : \Cat ^\op\times \Cat\to \Cat $$ the composition of $\Cat[-,-]$
with the functor that gives the underlying discrete category. Finally, a \textit{small category}
is a category $\SSSSS $ such that the underlying discrete category,  \textit{i.e.} $\Catt (\mathsf{1}, \SSSSS ) $,
and the collection of morphisms, \textit{i.e.} $\Catt (\mathsf{2}, \SSSSS ) $, 
 consist of sets. Equivalently,  a \textit{small category} is an internal category of $\Set $.

A $2$-category herein is the same as a $\Cat $-enriched category. We denote the \textit{enriched hom}
of a $2$-category $\A $ by
$$\A (-,-): \A ^\op \times \A \to \Cat $$
which, again, is of course a $2$-functor.
As usual, 
the composition of $1$-cells (morphisms) are denoted
by $\circ $, $\cdot $ or omitted whenever it is clear from the context. 
The vertical composition of $2$-cells is denoted by $\cdot $  or omitted when it is clear,  while the horizontal composition is denoted
by $\ast$. From the vertical and horizontal compositions, we construct the fundamental operation of \textit{pasting}~\cite{MR1040947}, introduced in \cite[Section~1]{MR0220789} and \cite[pag.~79]{MR0357542}.

We denote by $\Delta  $ the full subcategory of the underlying category of $\Cat $ whose objects are finite nonempty 
ordinals seen as posets (or thin categories). We are particularly interested in the subcategory $\Delta _\mathrm{3} $ of $\Delta $
with the objects $\mathsf{1}, \mathsf{2} $ and $\mathsf{3} $ 
generated by the morphisms
%
\pu
\begin{equation*}
	\diag{cosimplicial_onetwothree}
\end{equation*}
with the following relations:
\begin{equation*}
	\begin{aligned}
			s^0 d^1 &= \id _{\mathsf{1} } = s^0 d^0;
	\end{aligned}
	\qquad\qquad\qquad
	\begin{aligned}
			D ^t d ^k &=& D^{k}d^{t-1},\mbox{ if } t>k. 
	\end{aligned}
\end{equation*}

In order to fix notation, we briefly recall the definition of pseudofunctor between a category $\CCCCC $ and a
$2$-category  $\A $ below. For the case of $\A = \Cat $, this
definition was originally introduced by Grothendieck~\cite{MR0354651}
in its contravariant form, while its further generalization for
arbitrary bicategories was originally introduced by B\'{e}nabou~\cite[Section~4]{MR0220789} under the name 
\textit{homomorphism of bicategories}. 

\begin{defi}\label{pseudofunctorcategory2category}
Let $\CCCCC $ be a category (which can be seen as a locally discrete $2$-category) and $\A $ a $2$-category. A \textit{pseudofunctor} $\FFFF :\CCCCC \to \A $ is a pair $(\FFFF , \ffff ) $ with the following data:
\begin{itemize}
\renewcommand\labelitemi{--}	
\item A \textit{function} $\FFFF : \obj (\CCCCC )\to \obj (\A ) $;
\item For each pair $(x,y)$ of objects in $\CCCCC $, functors $\FFFF _{x,y} : 
\CCCCC (x,y)\to \A (\FFFF (x), \FFFF(y)) $, in which $\CCCCC (x,y) $ is seen
as a discrete category;
\item For each pair $g: x\to y , h: y\to z $ of morphisms in $\CCCCC $, an
invertible $2$-cell in $\A $: 
$$\ffff _ {hg}: \FFFF (h) \FFFF(g)\Rightarrow \FFFF(hg); $$
\item For each object $x$ of $\CCCCC $, an invertible $2$-cell in $\A $: $$\ffff_ x: \id _{\FFFF (x)} \Rightarrow \FFFF(\id _ x  );$$
\end{itemize} 
	such that, if $g: x\to y,  h:y\to z$ and  $ e:w\to x  $ are morphisms of $\CCCCC $, the following equations hold in $\A $:
	\begin{enumerate}
		\item Associativity:
		$$\xymatrix{ 
			\FFFF w\ar[rr]^{\FFFF (e)}\ar[dd]_{\FFFF (hge)}\ar[ddrr]|{\FFFF (ge)}
			&&
			\FFFF x\ar[dd]^{\FFFF (g)}\ar@{}[dl]|{\xLeftarrow{\ffff _ {ge}}} 
			&&
			\FFFF w\ar[rr] ^{ \FFFF (e) }\ar[dd]_{\FFFF (hge)}\ar@{}[dr]|{\xLeftarrow{\ffff_ {(hg)e}}}
			&&
			\FFFF x\ar[dd]^{\FFFF (g)}\ar[ddll]|{ \FFFF (hg)}
			\\
			&&&=&&&
			\\
			\FFFF z\ar@{}[ru]|{\xLeftarrow{\ffff _ {h(ge)}}}
			&&
			\FFFF y\ar[ll]^{\FFFF (h)}
			&&
			\FFFF z 
			&&
			\FFFF y\ar[ll]^{\FFFF (h)} \ar@{}[ul]|{\xLeftarrow{\ffff _ {hg}}} 							
		}$$
		\normalsize
		\item Identity:
		$$\xymatrix{  \FFFF w     \ar[rr]^{\FFFF (e)}\ar[dd]_{\FFFF (\id _ {x}e)}&&
			\FFFF x\ar@/_5ex/[dd]|{\FFFF (\id _ {x}) }
			\ar@{}[dd]|{\xLeftarrow{\ffff _ {x}} }
			\ar@/^5ex/[dd]|{\id _ {\FFFF x} }
			&&
			\FFFF w\ar[dd]_{\FFFF (e\id _ {w})}
			&& 
			\FFFF w\ar@{=}[ll]\ar@/_5ex/[dd]|{ \FFFF (\id _ {{}_w}) }
			\ar@{}[dd]|{\xLeftarrow{\ffff _ {{}_w}} }
			\ar@/^5ex/[dd]|{\id _ {\FFFF w} }
			&&
			\FFFF w\ar@/_4ex/[dd]|{\FFFF (e) }
			\ar@{}[dd]|{=}
			\ar@/^4ex/[dd]|{\FFFF (e) }										
			\\
			&\ar@{}[l]|{\xLeftarrow{\ffff _ {\id _ {x} e }}} &
			&=&
			&\ar@{}[l]|{\xLeftarrow{\ffff _ {{}_{e\id _ {w}  }}}} &
			&=& \\										
			\FFFF x\ar@{=}[rr]&&
			\FFFF x
			&&
			\FFFF x &&\FFFF w\ar[ll]^{\FFFF (e)}																				
			&& \FFFF x							}$$ 
		\normalsize			
	\end{enumerate}								
\end{defi}
In this paper, we are going to be particularly
interested in pseudofunctors of the type
$$(\AAAA , \aaaa): \Delta _{\mathrm{3}}\to\A ,$$
also called truncated pseudocosimplicial objects.
For simplicity, given such a truncated 
pseudocosimplicial
category, we define:
\begin{equation*}
	\begin{aligned}
		\AAAA (\sigma _ {01} ) &=& 
		\aaaa _{{}_{D^0 d^0}}^{-1}\cdot 
		\aaaa _{{}_{D^1 d^0}}; & \\
		\AAAA (\sigma _ {02} )
		 &=& \aaaa _{{}_{D^0 d^1}}^{-1}\cdot 
		 \aaaa _{{}_{D^2 d^0}}; &  \\
		\AAAA (\sigma _ {12} ) 
		&=& 
		\aaaa _{{}_{D^1 d^1}}^{-1}\cdot 
		\aaaa _{{}_{D^2 d^1}}; &
	\end{aligned}
	\qquad\qquad\qquad
	\begin{aligned}
			\AAAA (\mathfrak{n}_0 )  &= 
			\aaaa _{{}_{\mathsf{1} }}^{-1}\cdot 
			\aaaa _{{}_{s^0 d^0}}; \\
		\AAAA (\mathfrak{n}_1 )  &= 
		\aaaa _{{}_{\mathsf{1} }}^{-1}\cdot 
		\aaaa _{{}_{s^0 d^1}}. 
	\end{aligned}
\end{equation*}
Using this terminology, we recall the definition of the lax descent category of a pseudofunctor $\Delta _{\mathrm{3}}\to\Cat $.

\begin{defi}[Lax descent category]\label{definitionlaxdescentcategory}
Given a pseudofunctor $(\AAAA , \aaaa): \Delta _{\mathrm{3}}\to\Cat $, the \textit{lax descent category}  $\laxdea  $
of $\AAAA$ is defined as follows:
\begin{enumerate}
	\item The objects are pairs 
	$(w,\varphi )$ in which $w$ is an object of $ \AAAA (\mathsf{1} ) $ and $$\varphi : \AAAA (d ^1)(w)\to\AAAA (d ^0)(w) $$ is a morphism in $ \AAAA (\mathsf{2} ) $ satisfying the following equations:	
	\begin{itemize}
		\item[] Associativity:
		$$\AAAA (D ^0 )(\varphi )\,\cdot \, \AAAA (\sigma _ {02}) _ {w}\, \cdot \, \AAAA (D ^2)(\varphi ) = \AAAA (\sigma _ {01} )_ {w}\,\cdot\,\AAAA(D ^1)(\varphi )\,\cdot\,\AAAA (\sigma _ {12}) _ {w};   $$
		\normalsize
		\item[] Identity:
		$$\AAAA(\mathfrak{n} _0) _ {w}\,\cdot\, \AAAA(s^0) (\varphi ) = \AAAA(\mathfrak{n} _1) _ {w}. $$
		\normalsize
	\end{itemize}
	If the pair $(w,\varphi )$ is an object of $\laxdea $, we say that $\varphi $ is a \textit{descent datum} for $w$ w.r.t. $\AAAA $, or just an $\AAAA $-\textit{descent datum} for $w$. 	
	\item A morphism $\mathfrak{m} : (w,\varphi )\to (w',\varphi ') $ is a morphism $\mathfrak{m}: w\to w' $ in $\AAAA (\mathsf{1} ) $ such that $$\AAAA (d ^0)(\mathfrak{m} )\cdot\varphi  =\varphi ' \cdot \AAAA (d ^1)(\mathfrak{m} ) .$$
\end{enumerate}
The composition of morphisms is given by the composition of morphisms in $\AAAA (\mathsf{1} ) $.
\end{defi}

The lax descent category comes with an obvious \textit{forgetful functor} 
\begin{eqnarray*}
	\dd ^\AAAA: &\laxdea &\to \AAAA (\mathsf{1} )\\
	& (w, \varphi ) &\mapsto w\\
	&\mathfrak{m} &\mapsto \mathfrak{m} 
\end{eqnarray*}
and a natural transformation 
$\uppsi : \AAAA (d^1)\circ \dd ^\AAAA \Longrightarrow \AAAA (d^0)\circ \dd ^\AAAA $ pointwise
defined  by 
$$\uppsi _{(w, \varphi)} : = \varphi : \AAAA (d^1)(w)\to \AAAA (d^0)(w) .$$

Actually, the pair $\left( \dd ^\AAAA: \laxdea\to \AAAA (\mathsf{1} ), \uppsi : \AAAA (d^1)\circ\dd ^ \AAAA \Rightarrow \AAAA (d^0) \circ \dd ^ \AAAA
\right) $ 
 is a two dimensional limit  of $\AAAA $ (see \cite[pag.~177]{MR0401868} or, for instance, in our cause of pseudofunctors, \cite[Section~3]{2016arXiv160703087L}). Namely,
 the lax descent category of $$(\AAAA , \aaaa): \Delta _{\mathrm{3}}\to\Cat $$
 is the \textit{lax descent object}, as defined below, of the pseudofunctor $\AAAA$ in the $2$-category $\Cat $.

 \begin{defi}[Lax descent object~\cite[Section~2]{2019arXiv190201225L}]\label{definitionoflaxdescent}
 Given a pseudofunctor $\AAAA : \Delta _{\mathrm{3}}\to\A $,  
 the \textit{lax descent object}
of $\AAAA $ is, if it exists, an object $\laxdea $ of $\A$ together with a pair
%
\pu
%
\pu
\begin{equation*}
\left(\diag{universalonecelllaxdescentda}, \quad\diag{universaltwocelllaxdescentpsi}\right)
\end{equation*} 
of a morphism $\dd ^\AAAA : \laxdea\to \AAAA (\mathsf{1} ) $,
 called herein the \textit{forgetful morphism} (of descent data),  and
 a $2$-cell $\uppsi $ satisfying the following universal property.
 \begin{enumerate} 
 	\item For each pair
 	$\left( F: \s\to \AAAA (\mathsf{1} ),\,\beta : \AAAA ( d^1  )\circ F\Rightarrow \AAAA ( d^0  )\circ F \right) $ 
 	in which $F$ is a morphism and $\beta $ is a $2$-cell such that  
 	the equations
%
\pu
%
\pu
\begin{equation}\label{Associativityequationdescent}
	\diag{laxdescentassociativityleftside}\quad =\quad \diag{laxdescentassociativityrightside}
\end{equation}
%
\pu
%
\pu
\begin{equation}\label{Identityequationdescent}
\diag{laxdescentidentityleftside}\quad = \quad \diag{laxdescentidentityrightside}
\end{equation}
hold in $\A $, there is a unique morphism  
\begin{equation}
\check{F}: \s\to \laxdea
\end{equation}
in $\A $
making the equations
%
\pu
%
\pu
\begin{eqnarray}\label{firstequationofF}
F&=&\dd ^{\AAAA }\circ \check{F}\\
\diag{equation_on_the_descent_datum_leftside} & = &\diag{equation_on_the_descent_datum_rightside}
\end{eqnarray}
hold. 	
In this case, we say that the
 	 $2$-cell $\beta $
 	is an \textit{$\AAAA $-descent datum} for the morphism $F$.

\item The pair $(\dd ^\AAAA, \uppsi ) $ satisfies the \textit{descent associativity} (Equation~\eqref{Associativityequationdescent}) and the \textit{descent identity} (Equation~\eqref{Identityequationdescent}). In this case, the unique morphism
 	induced is clearly the identity on $\laxdea $.
 	
\item \label{universalpropertyofthelaxdescentfortwocells} Assume that 
$$F_1', F_0': \s \to \laxdea $$
are morphisms of $\A $. For each $2$-cell $\xi : \dd^{\AAAA}\,\circ\, F_1' \Rightarrow \dd^{\AAAA}\,\circ\, F_0'  : \s\to \AAAA (\mathsf{1} ) $ satisfying the equation 
%
\pu
%
\pu
\begin{equation*}
\diag{equation_two_cell_for_descent_left_side_2}\quad=\quad\diag{equation_two_cell_for_descent_right_side_2}
\end{equation*}

there is a unique $2$-cell  $$\xi ' : F_1' \Rightarrow F _0' : \s\to\laxdea  $$  
 	such that
%
\pu
\begin{equation*}
	\diag{xilaxdescenttwocellproperty}=\quad \xi .
\end{equation*}

 \end{enumerate}
 \end{defi}

\begin{lem}
	Let  $\AAAA : \Delta _{\mathrm{3}}\to\A $ be a pseudofunctor.	
	The pseudofunctor $\AAAA $ has a lax descent object $\laxdea $ if and only if 
	there is an isomorphism
	$$\A \left( \s , \laxdea \right)\cong \laxde\left(
	\A \left(\s,\AAAA - \right) \right) $$
	$2$-natural in $\s $,
	in which 
	$\A \left(\s,\AAAA - \right) : \Delta _{\mathrm{3}}\to \Cat $
	is the composition below. 
	$$\xymatrix@=5em{
		\Delta _{\mathrm{3}}\ar[r]^-{\AAAA }\ar@/_2pc/[rr]|-{\A \left(\s,\AAAA - \right) }
		&
		\A\ar[r]^-{\A \left(\s, - \right)}
		&
		\Cat 	
	}$$
\end{lem}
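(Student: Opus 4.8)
The plan is to recognize that this lemma is precisely the representability formulation of the universal property recorded in Definition \ref{definitionoflaxdescent}, and to prove it by unwinding that universal property against the explicit description of the lax descent category (Definition \ref{definitionlaxdescentcategory}) together with the $2$-dimensional Yoneda lemma. The first step is to compute the category $\laxde\left(\A\left(\s,\AAAA-\right)\right)$ explicitly. Applying Definition \ref{definitionlaxdescentcategory} to the pseudofunctor $\A\left(\s,\AAAA-\right):\Delta_{\mathrm{3}}\to\Cat$, and using that its value at $\mathsf{1}$ is $\A\left(\s,\AAAA(\mathsf{1})\right)$ and that it sends each $d^k$ to postcomposition with $\AAAA(d^k)$, an object is a pair $(F,\beta)$ consisting of a morphism $F:\s\to\AAAA(\mathsf{1})$ and a $2$-cell $\beta:\AAAA(d^1)\circ F\Rightarrow\AAAA(d^0)\circ F$. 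One then checks that the associativity and identity equations for $\beta$ as a descent datum in $\A\left(\s,\AAAA-\right)$ coincide, after whiskering the canonical isomorphisms $\AAAA(\sigma_{ij})$ and $\AAAA(\mathfrak{n}_i)$, with Equations \eqref{Associativityequationdescent} and \eqref{Identityequationdescent}. Likewise, a morphism $(F_1,\beta_1)\to(F_0,\beta_0)$ is exactly a $2$-cell $\xi:F_1\Rightarrow F_0$ satisfying the compatibility equation appearing in item \ref{universalpropertyofthelaxdescentfortwocells} of Definition \ref{definitionoflaxdescent}.

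For the direct implication, I would assume $\laxdea$ exists as a lax descent object and define the comparison functor $\A\left(\s,\laxdea\right)\to\laxde\left(\A\left(\s,\AAAA-\right)\right)$ by sending a morphism $G:\s\to\laxdea$ to the pair $\left(\dd^\AAAA\circ G,\ \uppsi\ast G\right)$ and a $2$-cell to its whiskering with $\uppsi$. Items (1) and (3) of Definition \ref{definitionoflaxdescent} state precisely that this assignment is bijective on objects (via the unique induced $\check{F}$) and bijective on morphisms (via the unique induced $\xi'$), so the comparison is an isomorphism of categories. Its $2$-naturality in $\s$ is immediate, since both the comparison and its inverse are described by whiskering with the fixed universal pair $(\dd^\AAAA,\uppsi)$, and whiskering is functorial and compatible with precomposition along any morphism $\s'\to\s$.

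For the converse, I would start from a $2$-natural isomorphism $\A\left(\s,\laxdea\right)\cong\laxde\left(\A\left(\s,\AAAA-\right)\right)$ and apply the $2$-dimensional Yoneda lemma: the image of $\id_{\laxdea}$ under the isomorphism at $\s=\laxdea$ provides a universal pair $(\dd^\AAAA,\uppsi)$. Naturality then transports, for each $\s$, an arbitrary pair $(F,\beta)$ and compatible $\xi$ to a morphism $\check{F}$ and a $2$-cell $\xi'$ satisfying the required equations, while the bijectivity furnished by the isomorphism yields their uniqueness; this is exactly the universal property of Definition \ref{definitionoflaxdescent}.

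The main obstacle is purely the bookkeeping in the first step: verifying that the abstract associativity and identity equations of Definition \ref{definitionlaxdescentcategory}, once specialized to $\A\left(\s,\AAAA-\right)$ and translated through the pasting calculus, reproduce Equations \eqref{Associativityequationdescent} and \eqref{Identityequationdescent} on the nose, with the correct placement of the coherence isomorphisms $\aaaa$ encoded in $\AAAA(\sigma_{ij})$ and $\AAAA(\mathfrak{n}_i)$. Once this identification is in place, both implications reduce to the standard equivalence between a universal cone and a representable $2$-functor.
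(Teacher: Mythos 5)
Your proposal is correct. Note that the paper itself supplies no proof of this lemma: it is stated as an immediate reformulation of Definition \ref{definitionoflaxdescent}, namely that the universal property of the lax descent object is exactly representability of the $2$-functor $\s\mapsto\laxde\left(\A\left(\s,\AAAA-\right)\right)$. Your argument is the standard one that fills this gap, and it is sound: the explicit identification of $\laxde\left(\A\left(\s,\AAAA-\right)\right)$ via Definition \ref{definitionlaxdescentcategory} (objects are pairs $(F,\beta)$ satisfying Equations \eqref{Associativityequationdescent} and \eqref{Identityequationdescent}, morphisms are the compatible $2$-cells of item \ref{universalpropertyofthelaxdescentfortwocells}), the whiskering comparison $G\mapsto\left(\dd^{\AAAA}\circ G,\ \uppsi\ast G\right)$ whose bijectivity on objects and morphisms is precisely items (1) and (3), and the $2$-Yoneda argument for the converse. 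Two points you leave implicit but should state: well-definedness of the comparison (that $\left(\dd^{\AAAA}\circ G,\ \uppsi\ast G\right)$ again satisfies the two descent equations) rests on item (2) of Definition \ref{definitionoflaxdescent} together with the fact that whiskering preserves pasting equalities; and that the comparison lands on morphisms (that $\dd^{\AAAA}\ast\alpha$ satisfies the compatibility equation) follows from the interchange law. Neither is a genuine gap, merely bookkeeping of the same kind you already flag in your last paragraph.
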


\section{Forgetful morphisms and Kan extensions}\label{Sectionforgetfulmorphismsandkanextensions}
Assuming the existence of the lax descent object of a pseudofunctor 
$(\AAAA , \aaaa): \Delta _{\mathrm{3}}\to\A $, the
forgetful morphism 
 $\dd ^\AAAA $ has many properties that are direct consequences of 
 the definition. Among them, the morphism $\dd ^\AAAA $ is
 \textit{faithful} and \textit{conservative} (by which we mean that, for any object $\s $ of $\A $,   the functor $\A (\s , \dd ^\AAAA )$ is faithful and reflects isomorphisms).

In this section,  we give the core observation of the present paper. Namely,
we investigate the properties of creation of Kan extensions by $\dd ^\AAAA $. 
We start by briefly recalling the basic definitions of  preservation and creation
of Kan extensions (see, for instance, \cite[Section~I.4]{MR0280560} and \cite[Section~2]{MR0299653}).

Let $J: \s\to \cC  $ and $H: \s\to \b $ be morphisms of a $2$-category $\A $. The \textit{right Kan extension of $J $ along $H $} is, if it exists, 
the right reflection $\ran _ H\, J $ of $J$ along the functor
$$\A (H, \cC ) : \A ( \b ,  \cC )\to \A (\s , \cC ). $$
This means that the right Kan extension is actually a pair 
$$\left( \ran _ H\, J : \b\to \cC , \gamma  : \left(\ran _ H\, J\right)\circ H\Rightarrow J 
\right) $$ 
consisting of a morphism $\ran _ H\, J $ and a $2$-cell $\gamma $, called the universal $2$-cell, in $\A$  such that, for each morphism $R: \b\to \cC $ of $\A $,
%
\pu	
%
\pu	
\begin{equation}
\diag{definition_of_kan_extension_leftside}\quad \mapsto \quad\diag{definition_of_kan_extension_rightside}
\end{equation}
defines a bijection $\A (\b ,\cC )(R, \ran _ H\, J)\cong \A (\s , \cC )(R \circ H,  J) $.

Let $J: \s \to \cC $, $H: \s\to \b $ and $G: \cC\to \d $ be morphisms in $\A $.
If $(\hat{J} ,\gamma ) $ is the right Kan extension  of 
$J$ along $H$, we say that $G$ \textit{preserves the right Kan extension
$\ran _H \, J$ } 
if the pair
%
\pu	
\begin{equation*}
	\left( G\circ \hat{J}, \, \diag{preservationofkanextension}\right)  
\end{equation*}
is the right Kan extension $\ran _ H\,  GJ $ of $GJ$ along $H$. 
Equivalently,  $G$ preserves $\ran _ H\, J $ 
if $\ran _ H\, G J $  exists
and, in addition to that, the unique $2$-cell 
$$ G\circ \hat{J} \Rightarrow  \ran _ H \, G J, $$ 
induced by the pair $( G\circ \hat{J} ,  \id _ G\ast \gamma )$ and the universal property of  $\ran _ H \, G J $, 
is invertible (see, for instance, a discussion on canonical (iso)morphisms in \cite{arXiv:1711.02051}).

Furthermore, we say that $G$ \textit{reflects the right Kan extension of $J$ along $H$} if, whenever $( G\circ \hat{J}, \id _ G\ast \gamma  ) $ is the right Kan extension of $GJ $ along $H$, $(\hat{J},  \gamma  ) $ is the
right Kan extension  of $J$ along $H$.

Finally, assuming the existence of $\ran _H\, GJ $, we say that $G: \cC\to \d $ \textit{creates the right Kan extension of 
$G J : \s\to \d   $ along $H$} if we have that (1) $G$ reflects $\ran _H\, GJ $ and (2)
 $\ran _H\, J $ exists and is preserved by $G$.

\begin{rem}[Left Kan extension]
Codually, we have the notion of \textit{left Kan extension} of a morphism  $J: \s\to \cC  $ along $H: \s\to \b $, denoted herein by $\lan _H J$. We also have the appropriate codual definitions of those introduced above. Namely, the concepts of \textit{preservation}, \textit{reflection} and \textit{creation} of left Kan extensions.
\end{rem}

\begin{rem}[Conical (co)limits]\label{trivialRemarkforconicallimits} 
For $\A = \Cat $, 
right Kan extensions along functors of the type $\SSSSS\to\mathsf{1} $
give the notion of conical limits. This is the most elementary and 
well known relation between Kan extensions
and conical limits, which gives 
the most elementary examples of right Kan extensions. We briefly recall
this fact below (see, for instance, \cite[Section~4]{MR0280560}).

Let $  J : \SSSSS\to \CCCCC $ be a functor in which $\SSSSS $ is a small category.
Recall that a cone over $J$ is a pair 
\begin{equation*}
\left(
w,
\vcenter{\xymatrix{
		&
		\SSSSS
		\ar@{}[dd]|-{\xRightarrow{\hskip 2em \kappa\hskip 2em } }
		\ar[ld]
		\ar@/^3pc/[dd]^-{J}
		\\
		\mathsf{1}
		\ar[rd]_-{w}
		&
		\\
		&\CCCCC }}
\right)
\end{equation*}
in which $\mathsf{1}$ is the terminal category, $w: \mathsf{1}\to \CCCCC $ denotes the functor whose image is the object $w$, and $\kappa$
is a natural transformation.

Denoting the composition of 
$$\xymatrix{\SSSSS \ar[r] & \mathsf{1}\ar[r]^{w}  & \CCCCC  } $$
by $\overline{w} $,  a morphism $\iota : w\to w' $ of $\CCCCC$
defines a morphism between the cones $(w, \kappa : \overline{w}\Rightarrow J )   $ 
and $(w ', \kappa ' : \overline{w '}\Rightarrow J )   $ 
 over $J$ if the equation
\begin{equation*} 
\vcenter{\xymatrix{
		&
		\SSSSS
		\ar@{}[dd]|-{\xRightarrow{\hskip 2em \kappa\hskip 2em } }
		\ar[ld]
		\ar@/^3pc/[dd]^-{J}
		\\
		\mathsf{1}
		\ar[rd]_-{w}
		&
		\\
		&\CCCCC }}
\quad =\quad
\vcenter{
\xymatrix{
\SSSSS
\ar[d]
\ar@/^4pc/[dd]^-{J}
\ar@/^2pc/@{{}{ }{}}[dd]|-{\xRightarrow{\hskip 0.5em \kappa '\hskip 0.5em } }
\\
\mathsf{1}
\ar@/_2pc/@{{}{ }{}}[d]|-{\xRightarrow{\hskip 0.5em \iota\hskip 0.5em } }
\ar@/_4pc/[d]_-{w }
\ar[d]^-{w' } 
\\
\CCCCC 
}} 
\end{equation*}
holds, in which, by abuse of language,  $\iota  $ denotes the natural transformation defined by the morphism
$\iota : w\to w' $. 

The above defines a category of cones over $J$. If it exists, the \textit{conical limit
of $J$} is the terminal object of this category. This is clearly equivalent to saying that
 the conical limit of $J$, denoted herein by $\lim J $, is the right Kan extension
$\ran _{\SSSSS\to \mathsf{1}}\, J $ in the $2$-category of categories $\Cat $, either one existing if the other does. In this context, the definitions of \textit{preservation}, \textit{reflection} and \textit{creation} of conical limits coincide 
with those coming from the respective notions in the case of right Kan extensions along $\SSSSS\to\mathsf{1} $ (\textit{e.g.} \cite[Section~4]{MR0280560}). 

Codually, the notion of \textit{conical colimit} of $J: \SSSSS \to \CCCCC $ coincides with
the notion of left Kan extension of $J$ along the unique functor $\SSSSS\to \mathsf{1} $ 
in the $2$-category $\Cat $. Again, the notions of \textit{preservation}, \textit{reflection} and \textit{creation} of conical colimits
coincide with those coming from the respective notions in the case of left Kan extensions along $\SSSSS\to \mathsf{1} $. 

It is well known that there is a deeper relation between conical (and weighted) limits and Kan extensions for much more general contexts. For instance, in the case of  $2$-categories endowed with Yoneda structures~\cite{MR0463261}, the concept of pointwise Kan extensions encompasses this relation (\textit{e.g.} \cite[pag.~50]{MR0280560} for the original case of the $2$-category of $\mathbb{V}$-enriched categories).
Although this concept plays a fundamental role in the theory of Kan extensions, we do not give further comment since we do not use this concept in the present paper. 
\end{rem}

In order to prove our main theorem, we present an elementary 
result below, whose version for limits and colimits is well known.
 
\begin{lem}\label{lemmaforconservativemorphisms}
Let $\A $ be a $2$-category and $H, J, G $ morphisms of $\A$. Assume that
$\ran _H J: \b\to \cC $ exists and is preserved by $G: \cC\to \d $.
If $G$ is conservative, then $G$ creates  the right Kan extension of 
$GJ$ along $H$.
\end{lem}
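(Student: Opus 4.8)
The plan is to unwind the definition of creation and notice that only one of its two clauses requires an argument. Indeed, clause (2) --- that $\ran_H J$ exists and is preserved by $G$ --- is exactly the hypothesis, and the existence of $\ran_H GJ$ presupposed by the definition is then automatic, since preservation means precisely that $G \circ (\ran_H J)$, equipped with the whiskered universal $2$-cell, is a right Kan extension $\ran_H GJ$. Thus everything reduces to verifying clause (1): that $G$ reflects the right Kan extension of $J$ along $H$.

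To do so, I would start from a candidate pair, i.e. a morphism $\hat{J} \colon \b \to \cC$ together with a $2$-cell $\gamma \colon \hat{J} \circ H \Rightarrow J$, for which $(G \circ \hat{J}, \id_G \ast \gamma)$ is the right Kan extension of $GJ$ along $H$; the goal is to show that $(\hat{J}, \gamma)$ is already the right Kan extension of $J$ along $H$. Writing $R := \ran_H J$ with universal $2$-cell $\epsilon \colon R \circ H \Rightarrow J$, the universal property of $R$ furnishes a unique comparison $2$-cell $\theta \colon \hat{J} \Rightarrow R$ with $\epsilon \cdot (\theta \ast \id_H) = \gamma$. Whiskering this identity by $G$ and using the interchange law gives $(\id_G \ast \epsilon) \cdot ((G \ast \theta) \ast \id_H) = \id_G \ast \gamma$.

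The crux of the argument --- and the only step that is not purely formal bookkeeping --- is to read this last identity correctly. By hypothesis $(G \circ R, \id_G \ast \epsilon)$ is $\ran_H GJ$ (preservation), and $(G \circ \hat{J}, \id_G \ast \gamma)$ is also $\ran_H GJ$ (our standing assumption); the identity therefore exhibits $G \ast \theta$ as the canonical comparison $2$-cell between these two presentations of the same right Kan extension, which is consequently invertible. Hence $G \ast \theta = \A(\b, G)(\theta)$ is an isomorphism in $\A(\b, \d)$. Since $G$ is conservative, the functor $\A(\b, G)$ reflects isomorphisms, so $\theta$ is invertible in $\A(\b, \cC)$.

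It then remains to transport the universal property along the isomorphism $\theta$: given any $S \colon \b \to \cC$ and $2$-cell $\alpha \colon S \circ H \Rightarrow J$, the unique factorization $\beta \colon S \Rightarrow R$ through $\epsilon$ yields $\theta^{-1} \cdot \beta \colon S \Rightarrow \hat{J}$ as the unique factorization through $\gamma$, using $\epsilon \cdot (\theta \ast \id_H) = \gamma$ and the invertibility of $\theta$. This shows that $(\hat{J}, \gamma)$ is a right Kan extension of $J$ along $H$, proving that $G$ reflects $\ran_H GJ$ and hence, together with clause (2), that $G$ creates the right Kan extension of $GJ$ along $H$.
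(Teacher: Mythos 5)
Your proof is correct and follows essentially the same route as the paper's: both reduce to the reflection clause, induce the comparison $2$-cell $\theta\colon \hat{J}\Rightarrow \ran_H J$ via the universal property, use uniqueness to identify $\id_G\ast\theta$ with the canonical invertible comparison between the two presentations of $\ran_H GJ$, and then invoke conservativity of $G$ (i.e.\ that $\A(\b,G)$ reflects isomorphisms). The only difference is that you spell out the final transport of the universal property along the isomorphism $\theta$, which the paper leaves implicit.
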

\begin{proof}
By hypothesis, 
$(G\cdot \ran _H J, \id _ G\ast \gamma ) $ is the right Kan extension of $GJ$ along $H$.
If  $(G\cdot \check{J}, \id _ G\ast \gamma ' ) $ is also the right Kan extension of $GJ$ along $H$,
we get a (unique) induced invertible $2$-cell $G\cdot \check{J}\Rightarrow G\cdot \ran _H J $. By the uniqueness property,
this induced invertible $2$-cell should be the image by 
$\A (\s , G )$ of the $2$-cell
$ \check{J}\Rightarrow \ran _H J $
induced by the universal property of $\ran _H J  $ and the $2$-cell $\gamma ' $. Since $\A (\s , G )$ reflects isomorphisms, the proof is complete.
\end{proof}

\begin{theo}[Main Theorem]\label{maintheorem}
Assume that the lax descent object of 
the pseudofunctor
$(\AAAA , \aaaa): \Delta _{\mathrm{3}}\to\A $  exists.
Let 
$  J : \s\to \laxdea $ and $ H: \s \to \b  $ be morphisms of $\A $. 
\begin{itemize}
	\renewcommand\labelitemi{--}	
\item The forgetful morphism $\dd ^\AAAA : \laxdea \to \AAAA(\mathsf{1} )$ creates the right Kan extension 
of $\dd ^\AAAA  J$ along $H$, provided that $\ran _ H\left( \dd ^\AAAA  J\right) $ 
exists and is preserved by $\AAAA (d^0)$ and $ \AAAA (D^0) \cdot \AAAA (d^0)$.
\item Codually, the forgetful morphism $\dd ^\AAAA : \laxdea \to \AAAA(\mathsf{1} )$ creates the left Kan extension 
of $\dd ^\AAAA  J$ along $H$, provided that  
$\lan _ H\left( \dd ^\AAAA  J\right) $ 
exists and is preserved by $\AAAA (d^1)$ and $ \AAAA (D^2) \cdot \AAAA (d^1)$. 
\end{itemize}
\end{theo}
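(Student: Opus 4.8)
The plan is to reduce the theorem to two facts --- that $\ran _H J$ exists and that $\dd ^\AAAA$ preserves it --- and then to read off creation from conservativity: since $\dd ^\AAAA$ is conservative, Lemma \ref{lemmaforconservativemorphisms} upgrades ``exists and is preserved'' to ``is created''. Write $F := \ran _H\left(\dd ^\AAAA J\right)$ with universal $2$-cell $\nu : F\circ H \Rightarrow \dd ^\AAAA J$, and $\varphi := \uppsi \ast \id _J$ for the descent datum carried by $J$; since $J$ factors through $\laxdea$, $\varphi$ obeys the associativity and identity equations of Definition \ref{definitionlaxdescentcategory}.

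First I would equip $F$ with a descent datum $\beta : \AAAA (d^1)\circ F \Rightarrow \AAAA (d^0)\circ F$. Because $\AAAA (d^0)$ preserves $F$, the pair $\left(\AAAA (d^0)\circ F,\ \AAAA (d^0)\ast \nu\right)$ is the right Kan extension $\ran _H\left(\AAAA (d^0)\circ \dd ^\AAAA J\right)$; consequently a $2$-cell with codomain $\AAAA (d^0)\circ F$ is uniquely determined by its restriction along $H$. I therefore define $\beta$ to be the unique $2$-cell with $\left(\AAAA (d^0)\ast \nu\right)\cdot \left(\beta \ast \id _H\right) = \varphi \cdot \left(\AAAA (d^1)\ast \nu\right)$.

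Next I would verify that $\beta$ is an $\AAAA$-descent datum. The governing principle, applied throughout, is that a $2$-cell whose codomain is a right Kan extension is determined by its whiskering with $H$. For the identity equation both composites have codomain $F$, which is itself a Kan extension, so it is enough to check the equation after restriction along $H$, where the defining equation of $\beta$ and the identity equation for $\varphi$ make it hold --- no further hypothesis is needed here. For the associativity equation both composites are $2$-cells $\AAAA (D^2)\AAAA (d^1)\circ F \Rightarrow \AAAA (D^0)\AAAA (d^0)\circ F$, the identifications being supplied by $\AAAA (\sigma _{01}),\AAAA (\sigma _{02}),\AAAA (\sigma _{12})$; here the hypothesis that $\AAAA (D^0)\cdot \AAAA (d^0)$ preserves $F$ makes the common codomain a right Kan extension, so the equation may again be checked after restriction along $H$, where it reduces to the associativity equation for $\varphi$. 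The universal property of the lax descent object then yields a unique $R : \b \to \laxdea$ with $\dd ^\AAAA\circ R = F$ and $\uppsi \ast \id _R = \beta$.

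Finally I would prove $(R,\tilde{\nu})$ is $\ran _H J$ and that $\dd ^\AAAA$ preserves it. The defining equation of $\beta$ is precisely the compatibility required, via the $2$-cell part of the universal property of $\laxdea$, to lift $\nu$ to a $2$-cell $\tilde{\nu} : R\circ H \Rightarrow J$ with $\dd ^\AAAA\ast \tilde{\nu} = \nu$. For the universal property, given $R' : \b \to \laxdea$ and $\rho : R'\circ H \Rightarrow J$, applying $\dd ^\AAAA$ and the universal property of $F$ yields a unique $\theta : \dd ^\AAAA\circ R' \Rightarrow F$ with $\nu \cdot (\theta \ast \id _H) = \dd ^\AAAA\ast \rho$; that $\theta$ is compatible with the descent data (hence lifts to $\bar{\rho} : R' \Rightarrow R$) is checked by restriction along $H$, using $\AAAA (d^0)$-preservation together with the compatibility of $\dd ^\AAAA\ast \rho$. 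Faithfulness of $\dd ^\AAAA$ on $2$-cells then delivers both $\tilde{\nu}\cdot (\bar{\rho}\ast \id _H) = \rho$ and uniqueness of $\bar{\rho}$, so $(R,\tilde{\nu}) = \ran _H J$; preservation by $\dd ^\AAAA$ is automatic since $\dd ^\AAAA\circ R = F$ and $\dd ^\AAAA\ast \tilde{\nu} = \nu$. The left Kan extension statement follows by codualizing, with $\AAAA (d^1)$ and $\AAAA (D^2)\cdot \AAAA (d^1)$ in the roles of $\AAAA (d^0)$ and $\AAAA (D^0)\cdot \AAAA (d^0)$. I expect the associativity verification to be the main obstacle: arranging the defining equation of $\beta$, the coherence cells $\AAAA (\sigma _{ij})$, and the naturality of $\nu$ so that restriction along $H$ genuinely collapses everything to the associativity of $\varphi$ is the delicate pasting step, and it is exactly what the $\AAAA (D^0)\cdot \AAAA (d^0)$-preservation hypothesis is there to license.
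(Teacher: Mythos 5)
Your proposal is correct and follows essentially the same route as the paper: reduce creation to existence-plus-preservation via the conservativity lemma, use $\AAAA(d^0)$-preservation to define the descent datum on $\ran_H\left(\dd^\AAAA J\right)$, use $\AAAA(D^0)\cdot\AAAA(d^0)$-preservation (resp.\ the Kan extension itself) for the associativity (resp.\ identity) equation, lift through the universal property of $\laxdea$, and finish the verification of the universal property and uniqueness via $\AAAA(d^0)$-preservation and faithfulness of $\dd^\AAAA$. Apart from notation (your $\beta$, $R$, $\tilde{\nu}$ are the paper's $\varphi$, $\check{J}$, $\tilde{\upnu}$) and your compressing of the explicit pasting computations into the principle that $2$-cells into a Kan extension are determined by restriction along $H$, the two arguments coincide.
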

\begin{proof}
By Lemma \ref{lemmaforconservativemorphisms}, since $\dd ^\AAAA $ is conservative, in order to prove that
$\dd ^\AAAA $  creates
the right Kan extension of 
 $\dd ^\AAAA  J : \s\to  \AAAA (\mathsf{1}) $
 along $H$,
it is enough to prove that $\ran _ H\, J$ exists and is preserved
by $\dd ^\AAAA $.

Let 
$\left( \dd ^\AAAA , \uppsi \right)$ be the universal pair
that gives the lax descent object $\laxdea $.	
We assume that $\left( J : \s\to \laxdea , \,  H: \s \to \b \right)  $ is a pair of morphisms in $\A $ such that the right Kan extension 
$$ \left(\ran _H\, \dd ^\AAAA  J,\quad \nu : \left( \ran _H \, \dd ^\AAAA  J\right)\circ H \Rightarrow \dd ^\AAAA J  \right) $$
of $\dd ^\AAAA J $ along $H$ is preserved by $\AAAA (d^0)$ and $ \AAAA (D^0) \cdot \AAAA (d^0)$.
\begin{itemize}
	\renewcommand\labelitemi{--}	
	\item By the universal property of the right Kan extension
%
\pu	
\begin{equation*}
\left( \AAAA (d^0 )\cdot
\ran_H\left(\dd{^\AAAA}{J}\right), \, \diag{preservedkanextension}\right)  
\end{equation*}
we get that there is a unique $2$-cell  $$\varphi : \AAAA (d^1)\cdot \ran _H \,\left( \dd ^\AAAA J\right)
\Rightarrow \AAAA (d^0)\cdot \ran _H \, \dd ^\AAAA J  $$ in $\A $ such that the equation
%
\pu
%
\pu
\begin{equation}\label{definitionofvarphi}
\diag{definitionofphileftside}\quad = \quad \diag{definitionofphirightside}
\end{equation}
	holds. 	We prove below that $(\ran _H \left(\dd ^\AAAA J \right), \varphi ) $ 
	satisfies the  \textit{descent associativity} (Eq.~\eqref{Associativityequationdescent}) and the \textit{descent identity} (Eq.~\eqref{Identityequationdescent}) w.r.t. $\AAAA $.
	
By the \textit{definition of $\varphi $} (see Eq.~\eqref{definitionofvarphi}), we have that
%
\pu
\begin{equation*}
\varphi '  \coloneqq\quad\diag{definitionofvarphilinha}
\end{equation*}
is equal to
%
\pu
\begin{equation*}
\diag{definitionofvarphilinhalogoigual}
\end{equation*}
Since $\uppsi $ is an $\AAAA $-descent datum for $\dd ^\AAAA $, we have that the $2$-cell above (and hence $\varphi ' $) is equal to
%
\pu
\begin{equation*}\label{psiisadescentdatumeq}
\diag{psiisadescentdatum}
\end{equation*}
which, by the  definition of $\varphi$ (see Eq.~\eqref{definitionofvarphi}), is equal to
the $2$-cell 
%
\pu
\begin{equation*}
	\diag{consequence_of_applying_definition_of_varphi_added_step}
\end{equation*}
which, by the  definition of $\varphi$ (see Eq.~\eqref{definitionofvarphi}) again, is equal to
%
\pu
\begin{equation}\tag{$\varphi '' $} 
\diag{consequence_of_applying_definition_of_varphi}
\end{equation}
denoted by $\varphi '' $. It should be noted that we proved that $\varphi ' = \varphi '' $.

By the universal property of the right Kan extension
%
\pu	
\begin{equation*}
\left( \AAAA (D^0)\cdot \AAAA (d^0 )\cdot
\ran_H\left(\dd{^\AAAA}{J}\right), \, \diag{preservedkanextensiondzeroDzero}\right)  
\end{equation*}
the equality $\varphi ' = \varphi '' $ implies that
the \textit{descent associativity}  w.r.t. $\AAAA $ (Eq.~\eqref{Associativityequationdescent})  for the pair
$(\ran _H \, \dd ^\AAAA J , \varphi ) $ holds. 
 
Analogously, we have that, by the \textit{definition of $\varphi $} (see Eq.~\eqref{definitionofvarphi}),
the equation
%
\pu
%
\pu
\begin{equation*}
\diag{leftsidevarphiidentity}\quad = \quad \diag{rightsidevarphiidentity}
\end{equation*}
holds. Moreover, by the \textit{descent identity}   w.r.t. $\AAAA $ (see Eq.~\eqref{Identityequationdescent}) for the pair 
$\left(\dd ^\AAAA , \uppsi \right) $, 
the right side (hence both sides) of the equation above 
is equal to $\nu $. 

Therefore, by the universal property of the
right Kan extension $(\ran_H\left(\dd{^\AAAA}{J}\right), \nu ) $,
we conclude that the \textit{descent identity}  (Eq.~\eqref{Identityequationdescent})  w.r.t. $\AAAA $ for the pair 
$(\ran_H\left(\dd{^\AAAA}{J}\right), \varphi ) $ holds.

This completes the proof that 
$\varphi $ is an $\AAAA$-\textit{descent datum} for $\ran_H\left(\dd{^\AAAA}{J}\right) $.

\item By the universal property of the lax descent object, we conclude that
there is a unique morphism $\check{J}: \b\to \laxdea $ of $\A $ such that
%
\pu
%
\pu
\begin{eqnarray*}
\ran_H\left(\dd{^\AAAA}{J}\right) &=&\dd ^{\AAAA }\circ \check{J}\\
\\	\diag{equation_on_the_descent_datum_leftside_KanExtension} & = &\diag{equation_on_the_descent_datum_rightside_KanExtension}
\end{eqnarray*}
Moreover, by the universal property of the lax descent object (taking $\xi = \nu $ in Eq.~\ref{universalpropertyofthelaxdescentfortwocells} of Definition \ref{definitionoflaxdescent}) and Equation~\eqref{definitionofvarphi}, it follows that there is a unique $2$-cell
$\tilde{\upnu} : \check{J}\cdot H\Rightarrow J $ in $\A $ such that
$$ \id _{\dd ^\AAAA } \ast \tilde{\upnu }  = \nu .
$$
We prove below that the pair $(\check{J}, \tilde{\upnu } )$ is in fact the
right Kan extension of $J$ along $H$. 

Given any morphism
$R: \b \to \laxdea $ and any $2$-cell 
%
\pu	
\begin{equation}
\diag{omegaforthekanextension}
\end{equation}
of $\A $, by the universal property of the right Kan extension
$$\left(\ran_H\left(\dd{^\AAAA}{J}\right),\, \nu \right) = \left( \dd ^\AAAA \cdot \check{J} ,\,   \id _{\dd ^\AAAA } \ast \tilde{\upnu }\right),$$ there is a unique $2$-cell
$$ \beta : \dd ^\AAAA \circ R \Rightarrow \ran_H\left(\dd{^\AAAA}{J}\right) $$ in $\A $ such that
%
\pu
%
\pu
%
\pu
\begin{equation*}
	\diag{equationKanextensioninordertodefinebetaomega} = \diag{equationKanextensioninordertodefinebetauniversalproperty} = 
\diag{equationKanextensioninordertodefinebetauniversalpropertyvvlinha}
\end{equation*}
It should be noted that, by the definition of $\beta $, we have that
%
\pu
%
\pu
%
\pu
\begin{equation}\label{betalinha}
\beta'\coloneqq\diag{deAomegaintermsofbetanutildeuppsi}=\diag{deAomegaintermsofbetanutildeuppsirightside} 
\end{equation}
holds. Again, by the definition of $\beta $, the right side of Equation~\eqref{betalinha} is equal to
\begin{equation*}
\beta '' \coloneqq \diag{rightsidetoseethatbetaisatwocell}
\end{equation*}
which proves that $\beta ' = \beta '' $.  

By the universal property of the right Kan extension 
$$\left( \AAAA (d^0)
\cdot \ran_H\left(\dd{^\AAAA}{J}\right) ,\, \id _{\AAAA (d^0)}\ast \nu \right) \,\, = \,\, \left( \AAAA (d^0)
\cdot \dd^\AAAA\cdot \check{J} , \, \id _{\AAAA (d^0)}\ast\id _{\dd^\AAAA}\ast \tilde{\upnu}  \right), $$
since $\beta ' = \beta '' $, we conclude that
%
\pu
%
\pu
\begin{equation*}
\diag{leftsideoftheequationforbetalaxdescent}\, =\, \diag{rightsideoftheequationforbetalaxdescent}
\end{equation*}
By the universal property of $\laxdea $ (see \ref{universalpropertyofthelaxdescentfortwocells} of Definition \ref{definitionoflaxdescent}), we get that there is a unique 
$2$-cell 
$\tilde{\upbeta} :  R\Rightarrow \check{J} $ in $\A$
such that 
$$\id_{\dd ^\AAAA}\ast  \tilde{\upbeta} = \beta .$$
By the faithfulness of $\dd ^\AAAA $, it is clear then that
$\tilde{\upbeta} $ is the unique $2$-cell such that
%
\pu	
%
\pu	
\begin{equation*}
\diag{kan_extension_rightside_prova_nocasodenu}\quad = \quad \diag{omegaforthekanextensionlinha} 
\end{equation*}
This completes the proof that $(\check{J} , \tilde{\upnu } ) $ is the right Kan extension of
$J $ along $H$. 

\item Finally, from the definition of  $\ran_H J = (\check{J} , \tilde{\upnu } ) $, it is clear that $\ran_H J  $ is indeed
preserved by $\dd ^\AAAA $.
\end{itemize}

\end{proof}

\subsection{Creation of absolute Kan extensions}

In a $2$-category $\A $, we say that a right Kan extension $\ran _ H J $ 
is \textit{absolute} if it is preserved by any morphism whose domain is 
the codomain of $\ran _ H J $. Moreover, we say that a morphism $G $ \textit{creates absolute right Kan extensions}
if, whenever $\ran _H GJ $ is an absolute right Kan extension, $G$ creates it.
We, of course, have evident codual notions for \textit{left Kan extensions}.

Finally, we say that \textit{$G$ creates absolute Kan extensions}
if it creates both absolute right Kan extensions and absolute left Kan extensions.

The following is an immediate consequence 
of Theorem \ref{maintheorem}.
\begin{coro}\label{absolutekanextensionsanddescent}
	Assume that 
	$(\AAAA , \aaaa): \Delta _{\mathrm{3}}\to\A $  has a lax descent object.
The forgetful morphism $\dd ^\AAAA : \laxdea\to \AAAA (\mathsf{1})$ creates  absolute Kan extensions.\\
Consequently, if a morphism $F$ of $\A $
is equal to $\dd ^\AAAA $ composed with any equivalence, then $F$
 creates 
absolute Kan extensions. 
\end{coro}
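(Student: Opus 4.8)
The plan is to read off the first assertion directly from Theorem~\ref{maintheorem} and then to deduce the consequence from two standard $2$-categorical facts. For the first assertion, suppose $\ran_H\left(\dd^\AAAA J\right)$ is an \emph{absolute} right Kan extension, with $J:\s\to\laxdea$ and $H:\s\to\b$. Its codomain is $\AAAA(\mathsf{1})$, so by the very definition of absoluteness it is preserved by \emph{every} morphism out of $\AAAA(\mathsf{1})$; in particular it is preserved by $\AAAA(d^0)$ and by $\AAAA(D^0)\cdot\AAAA(d^0)$. These are exactly the hypotheses of the first item of Theorem~\ref{maintheorem}, so $\dd^\AAAA$ creates $\ran_H\left(\dd^\AAAA J\right)$. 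The statement for absolute left Kan extensions follows in the same way from the codual item of Theorem~\ref{maintheorem}, now using that an absolute $\lan_H\left(\dd^\AAAA J\right)$ is preserved by $\AAAA(d^1)$ and $\AAAA(D^2)\cdot\AAAA(d^1)$. Hence $\dd^\AAAA$ creates both absolute right and absolute left Kan extensions, that is, it creates absolute Kan extensions.

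For the consequence I would first isolate the two elementary facts on which it rests: (i) any equivalence creates all Kan extensions, left and right; and (ii) preservation and reflection of a \emph{fixed} Kan extension datum compose, so that if $G_1$ preserves $\ran_H J$ and $G_2$ preserves $\ran_H\left(G_1 J\right)$ then $G_2\circ G_1$ preserves $\ran_H J$, and likewise for reflection, whence creation composes. Writing the given morphism as $F=\dd^\AAAA\circ E$ with $E$ an equivalence, I would argue as follows for an absolute $\ran_H(FJ)=\ran_H\left(\dd^\AAAA(EJ)\right)$: by the first assertion $\dd^\AAAA$ creates it, which yields $\ran_H(EJ)$ and shows $\dd^\AAAA$ preserves and reflects it; by (i) the equivalence $E$ creates $\ran_H(EJ)$, which yields $\ran_H J$ and shows $E$ preserves and reflects it; and assembling these through (ii) shows $F=\dd^\AAAA\circ E$ creates $\ran_H(FJ)$. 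The left Kan extension case is identical, and the case where the equivalence is taken on the other side of $\dd^\AAAA$ is handled by the same composition principle (ii) together with (i).

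The first assertion is the routine part: it is a pure matching of the word ``absolute'' against the hypotheses of Theorem~\ref{maintheorem}. The main thing to get right in the consequence is the bookkeeping around the equivalence. One must verify that existence and absoluteness propagate correctly through $E$, so that the intermediate extension $\ran_H(EJ)$ is genuinely available and is precisely the datum created by $\dd^\AAAA$, and that the compositionality of (ii) is applied to matching universal $2$-cells at each stage, using $\id_{\dd^\AAAA}\ast\id_E=\id_F$. Once fact (i) is in place --- itself the standard observation that an equivalence admits a pseudo-inverse under which Kan extensions along any $H$, together with their preservation by arbitrary morphisms, are transported in both directions --- there is no remaining obstacle.
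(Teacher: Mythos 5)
Your proof is correct and follows essentially the same route as the paper: the first assertion is exactly the paper's matching of absoluteness against the hypotheses of Theorem~\ref{maintheorem} (codually for left extensions), and the consequence rests on the same two facts the paper invokes, namely that equivalences create all Kan extensions and that creation composes. The only substantive difference is presentational: you spell out the compositionality of preservation and reflection and work with the decomposition $F=\dd^\AAAA\circ E$ (the order actually used later in Theorem~\ref{MONADIC}), whereas the paper's proof writes $F=\mathfrak{F}\circ\dd^\AAAA$ and leaves the compositionality of creation implicit.
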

\begin{proof}
By Theorem \ref{maintheorem}, the morphism $\dd ^\AAAA $ creates all the right Kan extensions preserved by $\AAAA (d^0)$ and $ \AAAA (D^0) \cdot \AAAA (d^0)$. Since absolute right Kan extensions with codomain in $\AAAA{(\mathsf{1})}$ are preserved by $\AAAA (d^0)$ and $ \AAAA (D^0) \cdot \AAAA (d^0)$, we have that 
$\dd ^\AAAA $ creates absolute right Kan extensions. Codually,  	$\dd ^\AAAA $ creates absolute left Kan extensions. This completes the proof $\dd ^\AAAA $  creates absolute Kan extensions.

To prove the second statement, it should be noted that equivalences create all Kan extensions. Hence, whenever $\mathfrak{F} $ is an equivalence, the composition $F = \mathfrak{F}\circ \dd ^\AAAA $ creates
any of the Kan extensions that are created by   $\dd ^\AAAA $. In particular, $F$ creates absolute Kan extensions. 
\end{proof}

Finally, as a consequence of Remark \ref{trivialRemarkforconicallimits}  and Corollary \ref{absolutekanextensionsanddescent}, since the notion of absolute limits/colimits of diagrams $J: \SSSSS\to \CCCCC $
coincides with the notion of absolute right/left Kan extensions along $\SSSSS\to \mathsf{1}$,
we get:

\begin{coro}\label{Catmaintheoremconsequence}
Let
	$(\AAAA , \aaaa): \Delta _{\mathrm{3}}\to\Cat $  be a pseudofunctor.
If a functor $F$ 
is equal to $\dd ^\AAAA $ composed with any equivalence, then $F$
creates 
absolute limits and colimits. 
\end{coro}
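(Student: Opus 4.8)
The plan is to obtain the statement as a direct specialization of Corollary \ref{absolutekanextensionsanddescent} to the case $\A = \Cat$, combined with the dictionary between absolute (co)limits and absolute Kan extensions provided by Remark \ref{trivialRemarkforconicallimits}.

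First I would note that the hypothesis of Corollary \ref{absolutekanextensionsanddescent} is automatic here: for any pseudofunctor $(\AAAA , \aaaa): \Delta _{\mathrm{3}}\to\Cat $, the lax descent object exists, being exactly the lax descent category of Definition \ref{definitionlaxdescentcategory} equipped with its forgetful functor $\dd^\AAAA$ and the canonical $2$-cell $\uppsi$. Consequently, the second assertion of Corollary \ref{absolutekanextensionsanddescent} applies verbatim and tells us that any functor $F = \mathfrak{F}\circ \dd^\AAAA$, with $\mathfrak{F}$ an equivalence, creates all absolute right Kan extensions and all absolute left Kan extensions.

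It then remains to reinterpret ``creates absolute Kan extensions'' as ``creates absolute limits and colimits''. For this I would invoke Remark \ref{trivialRemarkforconicallimits}: the conical limit of a diagram $J: \SSSSS\to \CCCCC$ is the right Kan extension $\ran_{\SSSSS\to \mathsf{1}}\, J$, and under this identification the notions of preservation, reflection, and creation agree, so an absolute limit corresponds precisely to an absolute right Kan extension with codomain $\CCCCC$. Since $F$ creates every absolute right Kan extension, it creates in particular those along functors of the form $\SSSSS\to \mathsf{1}$, hence all absolute limits; the codual argument, using absolute left Kan extensions along $\SSSSS\to\mathsf{1}$, yields absolute colimits.

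The argument is essentially a bookkeeping of definitions, so I do not expect a genuine obstacle. The one point requiring care is this final identification: one must confirm that ``absolute'', in the sense of being preserved by every functor out of $\CCCCC$, matches the clause ``preserved by any morphism whose domain is the codomain of $\ran_H J$'' in the definition of an absolute Kan extension. This matching is exactly what Remark \ref{trivialRemarkforconicallimits} records, so the passage is immediate once that remark is in hand.
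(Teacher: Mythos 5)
Your proposal is correct and follows exactly the paper's own route: the paper likewise deduces this corollary from Corollary \ref{absolutekanextensionsanddescent} together with the identification, recorded in Remark \ref{trivialRemarkforconicallimits}, of absolute limits/colimits with absolute right/left Kan extensions along $\SSSSS\to\mathsf{1}$. Your additional observation that the lax descent object always exists in $\Cat$ (namely, the lax descent category of Definition \ref{definitionlaxdescentcategory}) is a point the paper leaves implicit, and it is handled correctly.
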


Recall that, since split coequalizers, also called split forks, are examples of absolute coequalizers (see the Proposition on \cite[pag.~224]{MR2056584}),
\textit{a right adjoint functor is monadic if, and only if, it creates absolute coequalizers} (by Beck's monadicity theorem~\cite{MR1987896}).
 Therefore, by Corollary \ref{Catmaintheoremconsequence} we get:

\begin{theo}\label{MONADIC} 
	Assume that $G: \BBBBB\to \CCCCC $ has a left adjoint. If  
	there is a pseudofunctor $(\AAAA , \aaaa): \Delta _{\mathrm{3}}\to\Cat $  
	such that 
	$G =\dd ^\AAAA \circ \mathcal{K} $ for some equivalence $\mathcal{K} $, then $G$ is monadic.
\end{theo}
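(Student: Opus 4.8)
The plan is to combine Corollary~\ref{Catmaintheoremconsequence} with Beck's monadicity theorem in the convenient form recalled immediately above the statement, namely that a right adjoint functor is monadic if and only if it creates absolute coequalizers. Since the hypotheses already supply both a left adjoint for $G$ and a factorization $G = \dd^\AAAA \circ \mathcal{K}$ through the forgetful functor of a lax descent category and an equivalence, essentially all the real work has been done in Theorem~\ref{maintheorem} and its corollaries; the proof amounts to assembling these ingredients.

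First I would record that $G$ is a right adjoint functor, as it is assumed to have a left adjoint. Next, because $(\AAAA,\aaaa)\colon \Delta_{\mathrm{3}}\to\Cat$ is a pseudofunctor and $\mathcal{K}$ is an equivalence, the hypothesis $G = \dd^\AAAA \circ \mathcal{K}$ is precisely the situation covered by Corollary~\ref{Catmaintheoremconsequence}, which applies directly and gives that $G$ creates absolute limits and colimits. In particular, $G$ creates absolute colimits; and since the coequalizer of a parallel pair is the colimit over the walking-parallel-pair category $\SSSSS$, equivalently the left Kan extension $\lan_{\SSSSS\to\mathsf{1}}$ (Remark~\ref{trivialRemarkforconicallimits}), creation of absolute colimits specializes to creation of absolute coequalizers: whenever a parallel pair $f, g$ in $\BBBBB$ is such that $Gf, Gg$ admits an absolute coequalizer in $\CCCCC$, the pair $f, g$ has a coequalizer that $G$ creates.

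Finally, having verified that $G$ is a right adjoint that creates absolute coequalizers, I would invoke the recalled consequence of Beck's monadicity theorem to conclude that $G$ is monadic. The only point requiring attention is the translation between the language of absolute left Kan extensions used in Corollary~\ref{Catmaintheoremconsequence} and the language of absolute coequalizers used in Beck's theorem, which is immediate from Remark~\ref{trivialRemarkforconicallimits}; beyond this bookkeeping I expect no genuine obstacle, since the substantive content lies entirely in Theorem~\ref{maintheorem} and Corollary~\ref{absolutekanextensionsanddescent}.
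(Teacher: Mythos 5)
Your proposal is correct and follows essentially the same route as the paper's own proof: apply Corollary~\ref{Catmaintheoremconsequence} to the factorization $G = \dd^\AAAA \circ \mathcal{K}$ to get creation of absolute colimits (hence absolute coequalizers), then conclude by the recalled form of Beck's monadicity theorem. The extra care you take in translating between absolute left Kan extensions and absolute coequalizers via Remark~\ref{trivialRemarkforconicallimits} is implicit in the paper's proof but not a departure from it.
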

\begin{proof}
	Assume that $G$ has a left adjoint.
	
If there is a pseudofunctor $(\AAAA , \aaaa): \Delta _{\mathrm{3}}\to\Cat $ such that $G=\dd ^\AAAA \circ \mathcal{K} $ for an equivalence $\mathcal{K}$, then $G$ creates absolute coequalizers
	by Corollary \ref{Catmaintheoremconsequence}. By Beck's monadicity theorem, we conclude that $G$ is monadic.
\end{proof}	

Codually, we have:
\begin{theo}\label{COMONADIC}
	Assume that $G: \BBBBB\to \CCCCC $ has a right adjoint. If $G$ is equal to a forgetful functor of descent data $\dd ^\AAAA$ composed with an equivalence, then $G$ is comonadic.
\end{theo}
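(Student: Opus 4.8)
The plan is to run the codual of the proof of Theorem~\ref{MONADIC}, replacing Beck's monadicity theorem~\cite{MR1987896} by its dual, the \emph{comonadicity theorem}. Recall that a functor possessing a right adjoint is comonadic if and only if it creates the equalizers of those parallel pairs $(f,g)$ for which $(Gf,Gg)$ admits a split equalizer; and, codually to the Proposition on \cite[pag.~224]{MR2056584}, every split equalizer is an \emph{absolute} equalizer. Hence it is enough to show that $G$ creates absolute equalizers, because any $G$-split equalizer pair is in particular absolute and will therefore be created by $G$.

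To obtain this, I would invoke Corollary~\ref{Catmaintheoremconsequence}. By hypothesis there is a pseudofunctor $(\AAAA,\aaaa)\colon \Delta_\mathrm{3}\to\Cat$ together with an equivalence $\mathcal{K}$ such that $G=\dd^\AAAA\circ\mathcal{K}$; the corollary then guarantees that $G$ creates absolute limits \emph{and} absolute colimits, and in particular all absolute equalizers. Combined with the previous paragraph, $G$ creates the equalizers of every $G$-split pair, so, since $G$ has a right adjoint, the comonadicity theorem lets us conclude that $G$ is comonadic.

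The only real, and quite modest, difficulty is keeping the duality straight: passing from the monadic to the comonadic setting must trade the left adjoint for a right adjoint, coequalizers for equalizers, and absolute colimits for absolute limits, turning ``a right adjoint that creates absolute coequalizers is monadic'' into ``a functor with a right adjoint that creates absolute equalizers is comonadic.'' Crucially, no separate dualization of the lax descent construction is needed here: Corollary~\ref{Catmaintheoremconsequence} already provides creation of \emph{both} absolute limits and colimits, and it is precisely the limit half that feeds into the comonadicity theorem.
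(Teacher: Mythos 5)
Your proposal is correct and coincides with the paper's own argument: the paper states Theorem~\ref{COMONADIC} as the codual of Theorem~\ref{MONADIC} (introduced only by ``Codually, we have:'' with no separate proof), and your proof is exactly that codualization --- the dual of Beck's monadicity theorem, split equalizers being absolute equalizers, and Corollary~\ref{Catmaintheoremconsequence} supplying creation of absolute equalizers. Your closing observation is also accurate: since Corollary~\ref{Catmaintheoremconsequence} already yields creation of both absolute limits and absolute colimits, no dualization of the lax descent machinery is needed, and only the limit half is used.
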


\begin{rem}\label{monadicityinCat}
If $G$ is monadic, then,
by the monadicity theorem of \cite[Section~5]{2019arXiv190201225L}, $G$ is
an \textit{effective faithful functor}. That is to say, $G$ is the forgetful functor of descent data w.r.t. its \textit{$2$-dimensional cokernel diagram}
$$ (\BBBB , \bbbb): \Delta _{\mathrm{3}}\to\Cat $$ 
composed with an equivalence  
(see \cite[Section~2]{2019arXiv190201225L} for the definition of the \textit{$2$-dimensional cokernel diagram} of a morphism).

Therefore, by the above and Theorem \ref{MONADIC},  we have:
\begin{theo}[Monadicity Theorem]\label{monadicityinCat2}
Assume that $G: \BBBBB\to \CCCCC $ has a left adjoint. The functor $G$ is monadic if, and only if,
there is a pseudofunctor $(\AAAA , \aaaa): \Delta _{\mathrm{3}}\to\Cat $  
such that 
$G =\dd ^\AAAA \circ \mathcal{K} $ for some equivalence $\mathcal{K} $.
\end{theo}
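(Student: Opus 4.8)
The statement is a biconditional, so the plan is to dispatch the two implications separately, each by assembling a result already available rather than by fresh computation.

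For the ``if'' direction, suppose there is a pseudofunctor $(\AAAA, \aaaa): \Delta _{\mathrm{3}}\to\Cat$ and an equivalence $\mathcal{K}$ with $G = \dd^\AAAA \circ \mathcal{K}$. Since $G$ is assumed to have a left adjoint, this is exactly the hypothesis of Theorem \ref{MONADIC}, whose conclusion is that $G$ is monadic. Hence this direction needs nothing beyond citing Theorem \ref{MONADIC}.

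For the ``only if'' direction, the plan is to manufacture the required pseudofunctor and equivalence out of the assumption that $G$ is monadic. The key external input is the monadicity theorem of \cite{2019arXiv190201225L}, which characterizes monadic functors as the \emph{effective faithful functors}. Concretely, I would take $\AAAA$ to be the $2$-dimensional cokernel diagram $(\BBBB, \bbbb): \Delta _{\mathrm{3}}\to\Cat$ of $G$ (which is available because a monadic $G$ is in particular faithful); the cited theorem then guarantees that $G$ factors as the forgetful morphism $\dd^\BBBB$ of descent data for this diagram, composed with an equivalence $\mathcal{K}$. This is precisely the factorization demanded by the statement. The conceptual content of this step is the identification of ``effective faithful'' in the sense of that paper with ``forgetful functor of descent data, up to composition with an equivalence''; once that dictionary is in place, the factorization is immediate.

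The main obstacle is concentrated entirely in the ``only if'' direction, and in fact it is offloaded to the cited characterization: the delicate work of constructing the cokernel diagram and checking that $\dd^\BBBB$ recovers $G$ up to equivalence is carried out in \cite{2019arXiv190201225L}, not here. At the level of the present paper the proof is therefore a short two-line synthesis: the forward implication is Theorem \ref{MONADIC}, and the reverse implication is the characterization of monadic functors via their $2$-dimensional cokernel diagrams. No additional pasting-diagram computation is required.
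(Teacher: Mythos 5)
Your proposal is correct and matches the paper's own argument exactly: the ``if'' direction is precisely Theorem \ref{MONADIC}, and the ``only if'' direction is precisely the appeal to the monadicity theorem of \cite[Section~5]{2019arXiv190201225L}, which exhibits a monadic $G$ as the forgetful functor of descent data with respect to its $2$-dimensional cokernel diagram $(\BBBB , \bbbb): \Delta _{\mathrm{3}}\to\Cat$ composed with an equivalence. The paper carries out this same two-line synthesis inside Remark \ref{monadicityinCat}, so there is nothing to add.
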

\end{rem}

\begin{rem}[Creation of limits and of absolute colimits]
	We do not give full definitions in this remark, since it is not the
	main point of this paper. The interested reader 
	may find the missing definitions and proofs in \cite{2019arXiv190201225L}.
	
	Employing the monadicity theorem of \cite[Section~5]{2019arXiv190201225L}, Theorem \ref{maintheorem} can be seen as a generalization of the
	well known results of creation of limits (and colimits)
	of monadic functors.
	
	More precisely, as mentioned in the proof of Theorem \ref{MONADIC}, by 
	the monadicity theorem of \cite[Section~5]{2019arXiv190201225L},
	given a monadic functor $G$, denoted herein by $(\BBBB , \bbbb): \Delta _{\mathrm{3}}\to\Cat $,
	 we get that $G$ is the forgetful morphism (of the descent data) w.r.t. its \textit{$2$-dimensional cokernel diagram}. Therefore:
	\begin{itemize}
		\renewcommand\labelitemi{--}
		\item 	Since $G $ has a left adjoint, $\BBBB  (d^0) $ and 
	$\BBBB  (D^0)\cdot \BBBB  (d^0) $ have left adjoints (see \cite[Section~4]{2019arXiv190201225L}). Hence, since right adjoint morphisms preserve all right Kan extensions, we get that $G$ creates all right Kan extensions by Theorem
	\ref{maintheorem}. In particular, $G$ creates all limits.
	
		\item By Theorem \ref{maintheorem}, we have that $G$, being a forgetful morphism of descent data, does create all
		absolute left Kan extensions. But, more generally, $G$ creates the left Kan extensions that are preserved by $\BBBB (D^2)\cdot \BBBB (d^1) $ and $\BBBB (d^1) $. 
		
		Therefore, by the definition of the $2$-dimensional cokernel diagram $\BBBB $ (\cite[Section~2]{2019arXiv190201225L}), we conclude that
	$G$ creates the left Kan extensions that are preserved by $T$ and $T^2 $ (in which $T$ denotes the endofunctor underlying the codensity monad of $G$).
	\end{itemize}	
\end{rem}

\section{Descent theory}\label{Sectiondescenttheory}
Using the concepts previously introduced in this paper, we briefly recover the classical setting of descent theory w.r.t. fibrations. The exposition in this section is heavily influenced by \cite{MR1466540, 2016arXiv160604999L} but it is independent and different from both of them.

Following their approach, instead of considering fibrations, 
we start with a pseudofunctor
$$\FFFF: \CCCCC^\op \to \Cat 
$$
which can be also called an \textit{indexed category}.

A \textit{precategory} in $\CCCCC $ is a functor
$a: \Delta_{\mathrm{3} }^\op \to \CCCCC $ and, hence, each 
internal category or groupoid of $\CCCCC $ 
 has an underlying precategory. In particular, internal groups and monoids
 w.r.t. the cartesian structure also have underlying precategories.
By abuse of language, whenever a precategory $a$ is the underlying precategory
of an internal category (internal groupoid, monoid or group), we say that the precategory $a$ \textit{is} 
an internal category (internal groupoid, monoid or group).

\begin{rem}[Composition of pseudofunctors]
Let	$a: \Delta_{\mathrm{3} }^\op \to \CCCCC $ be a precategory. Firstly,
we can consider the functor $\op (a): \Delta_{\mathrm{3} } \to \CCCCC ^\op $, also denoted by
$a ^\op $, 
which is the image of $a$ by the usual dualization (invertible) $2$-functor
$$\op : \Cat ^\co\to\Cat .$$
Secondly, we can consider that $\op (a): \Delta_{\mathrm{3} } \to \CCCCC ^\op $ is actually a pseudofunctor between locally discrete $2$-categories.
Therefore we can define the composition
$$\FFFF\circ\, \op (a): \Delta_{\mathrm{3} }\to \Cat $$
as a particular case of composition of pseudofunctors/homomorphisms
of bicategories/$2$-categories. Namely,
the composition is defined by
\begin{eqnarray*}
\FFFF\circ\, \op (a) := \BBBB : & \Delta_{\mathrm{3} } & \to \Cat\\ 
& x & \mapsto \FFFF\left(a(x) \right) \\
& g: x\to y & \mapsto \FFFF\left(a^\op (g: x\to y ) \right)   \\
 \bbbb _  {x} : = \ffff _{ a(x) } : &\id _ {\FFFF (a(x) ) } &\Rightarrow \FFFF\left(  \id _{a(x) }\right) \\
\bbbb _  {hg}  : = \ffff _ {a^\op (h) a^\op (g) } : &\FFFF \, a^\op (h) \cdot \FFFF \, a^\op (g)    &
\Rightarrow \FFFF \, a^\op (hg).
\end{eqnarray*}
\end{rem}	
By definition, the category of \textit{$\FFFF $-internal actions} of a precategory $a: \Delta_{\mathrm{3} }^\op \to \CCCCC $ (actions $a\to \CCCCC $)
 is the lax descent object of the composition
$\FFFF\circ\, \op(a)  : \Delta_{\mathrm{3} }\to \Cat $. That is to say,
$$\FFFF\textrm{-}\textrm{IntAct}\left(a  \right) := \laxde\left(\FFFF\circ \,\op (a)  \right) .
$$
As briefly mentioned in the introduction, this definition 
generalizes the well known definitions of categories of actions. For instance, taking $\CCCCC = \Set $ and $\FFFF = \Set /- : \Set ^\op \to\Cat $,
if $a: \Delta_{\mathrm{3} } ^\op\to \Set $ is an internal category, 
the category of $\left( \Set /-\right)$-internal actions of $a $ coincides
up to equivalence with the category $\Cat \left[ a, \Set \right] $ of functors $a\to\Set $ and natural transformations. This shows that the definition above has as particular cases
the well known categories of $m$-sets (or $g$-sets) for a monoid $m$ (or a group $g$).

Analogously, given a topological group $g$, we can consider the category of 
$g\textrm{-}\Top $ of the Eilenberg-Moore algebras of the monad $g\times - $ with the
multiplication $g\times g \times - \to g\times - $ given by the operation of $g$, that is to say, 
the category of $g$-spaces. This again coincides with the category of
$\left(\Top /-\right) \textrm{-}\textrm{IntAct}\left( g  \right)  $, in which $g$, by abuse of language,
is the underlying precategory of $g$.

A precategory is \textit{discrete} if it is naturally isomorphic to a constant functor $\overline{w} : \Delta_{\mathrm{3} }^\op \to \CCCCC $ for an object $w$ of $\CCCCC $.
Clearly, we have:
\begin{lem}
	The category of $\FFFF$-internal actions  of 
	a discrete precategory $\overline{w} $ is equivalent to $\FFFF (w) $.
\end{lem}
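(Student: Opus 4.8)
The plan is to reduce to the case of the literal constant functor and then read off the lax descent category directly from Definition~\ref{definitionlaxdescentcategory}. First I would note that if the precategory $a$ is discrete, there is a natural isomorphism $a\cong\overline{w}$; applying the $2$-functor $\op$ and then whiskering with $\FFFF$ produces a pseudonatural isomorphism $\FFFF\circ\op(a)\cong\FFFF\circ\op(\overline{w})$, and since the lax descent category construction is functorial in pseudonatural transformations and carries pseudonatural equivalences to equivalences (clear from its explicit description), it suffices to treat $a=\overline{w}$. Writing $\BBBB:=\FFFF\circ\op(\overline{w})$, the pseudofunctor $\BBBB$ has $\BBBB(\mathsf{1})=\BBBB(\mathsf{2})=\BBBB(\mathsf{3})=\FFFF(w)$, sends every generating $1$-cell of $\Delta_{\mathrm{3}}$ to $\FFFF(\id_w)$, and has each structure $2$-cell $\bbbb$ equal to the composition constraint of $\FFFF$ at the pair $(\id_w,\id_w)$, namely $\ffff_{\id_w\id_w}$.

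The next step is to compute the derived structure $2$-cells. Because $\op(\overline{w})$ collapses every morphism of $\Delta_{\mathrm{3}}$ to $\id_w$, the two constraints entering each $\BBBB(\sigma_{ij})$ literally coincide, so that $\BBBB(\sigma_{01})=\BBBB(\sigma_{02})=\BBBB(\sigma_{12})=\id$; likewise $\BBBB(\mathfrak{n}_0)=\BBBB(\mathfrak{n}_1)$ is the single \emph{invertible} natural transformation $\eta:=\ffff_w^{-1}\cdot\ffff_{\id_w\id_w}\colon \FFFF(\id_w)\circ\FFFF(\id_w)\Rightarrow\id_{\FFFF(w)}$.

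With these identifications I would define $\Phi\colon \FFFF(w)\to\laxde(\BBBB)$ by $x\mapsto(x,\id_{\FFFF(\id_w)(x)})$ on objects and by the identity assignment on morphisms. The descent identity for $(x,\id)$ becomes $\eta_x\cdot\id=\eta_x$ and the descent associativity collapses to $\id=\id\cdot\id$, so $\Phi$ is well defined; moreover $\dd^{\BBBB}\circ\Phi=\id_{\FFFF(w)}$ with $\dd^{\BBBB}$ faithful, so $\Phi$ is faithful, and fullness is immediate from the explicit description of the morphisms of $\laxde(\BBBB)$. The crucial point is surjectivity on objects: given any descent datum $\varphi\colon \FFFF(\id_w)(x)\to\FFFF(\id_w)(x)$, the descent identity reads $\eta_x\cdot\FFFF(\id_w)(\varphi)=\eta_x$, whence $\FFFF(\id_w)(\varphi)=\id$ since $\eta_x$ is invertible; then I would feed this into the naturality square of the unit constraint $\ffff_w$ at the endomorphism $\varphi$ and cancel the isomorphism $(\ffff_w)_{\FFFF(\id_w)(x)}$ to force $\varphi=\id$. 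Hence every object of $\laxde(\BBBB)$ equals $(x,\id)=\Phi(x)$, so $\Phi$ is an isomorphism of categories, which in particular yields the asserted equivalence $\FFFF(w)\simeq\FFFF\textrm{-}\textrm{IntAct}\left(\overline{w}\right)$.

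I expect the main obstacle to be the bookkeeping that makes the coherence data collapse, i.e.\ checking that the composition constraints entering the $\BBBB(\sigma_{ij})$ and the $\BBBB(\mathfrak{n}_i)$ agree on the nose, so that $\BBBB(\sigma_{ij})=\id$ and $\BBBB(\mathfrak{n}_0)=\BBBB(\mathfrak{n}_1)$. The one genuinely non-formal move is the last one: the descent identity only gives $\FFFF(\id_w)(\varphi)=\id$, and it is the naturality of $\ffff_w$ — not the descent equation by itself — that upgrades this to $\varphi=\id$ and thereby rigidifies the descent data.
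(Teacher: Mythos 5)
Your proof is correct, and it is essentially the verification the paper leaves to the reader: the lemma is stated with no proof at all (it is prefaced by ``Clearly, we have:''), and your direct computation from Definition~\ref{definitionlaxdescentcategory} is the intended argument. Your write-up also makes explicit the one point that is genuinely not automatic — the descent identity only yields $\FFFF(\id_w)(\varphi)=\id$, and it is the naturality of the unit constraint $\ffff_w$ that rigidifies this to $\varphi=\id$ — which is precisely the step hidden behind the paper's ``clearly.''
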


Given a precategory $a : \Delta_{\mathrm{3} }^\op\to \CCCCC $,  the \textit{underlying discrete
precategory} of the
precategory $a$ is the precategory constantly equal to $a (\mathsf{1}) $, which we  denote by $\overline{a (\mathsf{1})} : \Delta_{\mathrm{3} }^\op\to \CCCCC $. 
We have, then, that the functor
$$\laxde\left(\FFFF\circ a^\op \right)\to \FFFF\circ a (\mathsf{1} )
$$
that forgets the descent data is the forgetful functor 
$$\FFFF\textrm{-}\textrm{IntAct}\left( a  \right)\to
\FFFF\textrm{-}\textrm{IntAct}\left(\overline{a(\mathsf{1})} \right)
$$
between the category of $\FFFF$-internal actions of $a$ and the 
category of $\FFFF$-internal actions of the underlying discrete precategory of $a$.

\begin{rem}[Underlying discrete precategory]\label{discretizationremark}
	The definition of the
	\textit{underlying discrete precategory} of a precategory  is 
	motivated by the special case of internal categories,
	and/or the case of precategories that can be extended to truncated simplicial objects
	$\underline{\Delta_{\mathrm{3} }}^\op\to \CCCCC $,
	\begin{equation*}
	\vcenter{
		\xymatrix{  \mathsf{1}\ar[rr]|-{s_0} && \mathsf{2}
			\ar@/^1.5pc/[ll]|-{d_0}\ar@/_1.5pc/[ll]|-{d_1}
			\ar@/_2.5pc/[rr]|-{S_0}\ar@/^2.5pc/[rr]|-{S_1}
			&& \mathsf{3}\ar@/^1.3pc/[ll]|-{D_0}\ar[ll]|-{D_1}\ar@/_1.3pc/[ll]|-{D_2} } }
	\qquad\to\qquad
	\CCCCC ,
	\end{equation*} 
	in which $\underline{\Delta}_{\mathrm{3} } $ is the full subcategory of $\Delta $ with the objects
	$\mathsf{1}$, $\mathsf{2}$ and $\mathsf{3}$.
	We have an adjunction
	$$\xymatrix{
		\Cat \left[\underline{\Delta}_{\mathrm{3} } ^\op, \CCCCC\right] \ar@/_0.8pc/[rr]\ar@{}[rr]|{\bot }
		&&
		\Cat \left[\mathsf{1} , \CCCCC\right]\cong \CCCCC \ar@/_0.8pc/[ll]
	} $$
	in which the left adjoint is given by the usual functor $w\mapsto \overline{w} $ that associates each 
	object to the constant functor $\overline{w} : \underline{\Delta}_{\mathrm{3} }\to \CCCCC $. Of course, the right adjoint is given by the conical limit, which, in this case,
	coincides with $a(\mathsf{1})$, since $\mathsf{1}$ is the initial object of $\underline{\Delta_{\mathrm{3} }}^\op $. The underlying discrete precategory, in this case, is given by the monad induced by this adjunction. 
\end{rem}

\begin{rem}[Forgetful functor]
As particular case of Remark \ref{discretizationremark},
in the case of $\CCCCC = \Set $ and $\FFFF =\left( \Set /-\right) $,  if $a : \Delta_{\mathrm{3} }^\op \to\Set $ is an internal category, 
the forgetful functor $$\left(\Set /-\right) \textrm{-}\textrm{IntAct}\left( a  \right)\to
\left(\Set /-\right) \textrm{-}\textrm{IntAct}\left(\overline{a(\mathsf{1})} \right)
$$ coincides with the usual forgetful functor
$\Cat [a, \Set ]\to  \Set ^{a(\mathsf{1})}\simeq \Set /a(\mathsf{1})  $ between the category of functors
$a\to\Set $ and the category of \textit{functions} between the set  $a(\mathsf{1})$
of objects of $a $ and the collection of objects of $\Set $. In particular, this shows that, if $a$ is a monoid, we get that this forgetful functor 
coincides with the usual forgetful functor $a\textrm{-}\Set\to \Set $.
Analogously, taking $\CCCCC = \Top $ and $\FFFF = \left(\Top /- \right) $, if $g: \Delta_{\mathrm{3} }^\op \to \Top$
is an internal group (topological group), then the forgetful functor  	
$$\left(\Top /-\right) \textrm{-}\textrm{IntAct}\left( g  \right)\to
\left(\Top /-\right) \textrm{-}\textrm{IntAct}\left(\overline{g(\mathsf{1})} \right)
$$
coincides with the usual forgetful functor $g\textrm{-}\Top\to\Top $ between the category
of $g$-spaces and $\Top$. 
\end{rem}

As a consequence of Corollary \ref{Catmaintheoremconsequence}: 

\begin{coro}
Given an indexed category 	$\FFFF: \CCCCC^\op \to \Cat $
and a precategory $a : \Delta_{\mathrm{3} }^\op\to \CCCCC $,
the forgetful functor 
$$\FFFF\textrm{-}\textrm{IntAct}\left( a  \right)\to
\FFFF\textrm{-}\textrm{IntAct}\left(\overline{a(\mathsf{1})} \right)
$$
creates absolute Kan extensions and, hence, in particular, it creates absolute limits and colimits. 
\end{coro}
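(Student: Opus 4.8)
The plan is to recognise this corollary as an immediate instance of Corollary~\ref{absolutekanextensionsanddescent}, applied to the pseudofunctor $\FFFF\circ\op(a)$. First I would set $\BBBB := \FFFF\circ\op(a): \Delta_{\mathrm{3}}\to\Cat$, which is a genuine pseudofunctor by the Remark on the composition of pseudofunctors. By the very definition of the category of $\FFFF$-internal actions, $\FFFF\textrm{-}\textrm{IntAct}(a) = \laxde\left(\BBBB\right)$; hence the lax descent object of $\BBBB$ exists in $\Cat$, the forgetful functor of descent data $\dd^{\BBBB}: \laxde\left(\BBBB\right)\to\BBBB(\mathsf{1}) = \FFFF(a(\mathsf{1}))$ is available, and the hypothesis of Corollary~\ref{absolutekanextensionsanddescent} is met.

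The single identification to make explicit is the one already recorded in the paragraph preceding the statement: the forgetful functor $\FFFF\textrm{-}\textrm{IntAct}(a)\to\FFFF\textrm{-}\textrm{IntAct}(\overline{a(\mathsf{1})})$ is precisely $\dd^{\BBBB}$ post-composed with the equivalence $\FFFF(a(\mathsf{1}))\simeq\FFFF\textrm{-}\textrm{IntAct}(\overline{a(\mathsf{1})})$ supplied by the Lemma identifying the $\FFFF$-internal actions of a discrete precategory $\overline{w}$ with $\FFFF(w)$. Thus the functor in question is of the form $\mathcal{K}\circ\dd^{\BBBB}$ for an equivalence $\mathcal{K}$, which is exactly the shape required by the second statement of Corollary~\ref{absolutekanextensionsanddescent}.

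With this in hand the conclusion is immediate: by Corollary~\ref{absolutekanextensionsanddescent} the morphism $\dd^{\BBBB}$ creates absolute Kan extensions, and since an equivalence creates all Kan extensions, the composite $\mathcal{K}\circ\dd^{\BBBB}$ does too, which is the first assertion. The claim about absolute limits and colimits then follows either directly from Corollary~\ref{Catmaintheoremconsequence}, or from the first assertion together with Remark~\ref{trivialRemarkforconicallimits}, which identifies absolute (co)limits of a diagram $\SSSSS\to\CCCCC$ with absolute right/left Kan extensions along $\SSSSS\to\mathsf{1}$. I do not expect any genuine obstacle here, since all the substantive work lives in Theorem~\ref{maintheorem} and the corollaries derived from it, the present statement being merely its specialisation to $\FFFF\circ\op(a)$. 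The one point meriting a moment's care is checking that the forgetful functor between the two categories of internal actions really is $\dd^{\BBBB}$ up to the equivalence of that Lemma, but this is exactly what the discussion immediately before the corollary establishes, so no additional argument is needed.
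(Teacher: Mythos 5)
Your proof is correct and follows the paper's own route: the paper obtains this corollary precisely by applying Corollary \ref{Catmaintheoremconsequence} (and, for the Kan-extension claim, Corollary \ref{absolutekanextensionsanddescent}) to the pseudofunctor $\FFFF\circ\,\op(a)$, using the identification---made in the paragraph immediately preceding the statement---of the forgetful functor between the categories of internal actions with $\dd^{\FFFF\circ\,\op(a)}$ composed with the equivalence supplied by the Lemma on discrete precategories. Your write-up is, if anything, slightly more explicit than the paper's one-line derivation, but the substance and decomposition are identical.
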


\textit{Henceforth, we assume that $\CCCCC $ has pullbacks, and 
a pseudofunctor $\FFFF :\CCCCC ^\op\to \Cat  $ is given.}
Every morphism $p: e\to b$ of $\CCCCC  $
induces an internal groupoid whose underlying precategory, denoted herein
by $\mathsf{Eq}(p) $, is given by
%
\pu
\begin{equation*}
\diag{eqpinternalgroupoid}
\end{equation*}
in which $e\times _b e $ denotes the pullback of $p$ along itself, and  the arrows are given
by the projections and the diagonal morphism (see, for instance, 
\cite[Section~3]{MR1466540}).
For short, \textit{we denote by $\FFFF ^p : \Delta_{\mathrm{3} }\to \Cat $ 
the pseudofunctor obtained by the composition 
$$\FFFF \circ  \mathsf{Eq}(p)^\op : \Delta_{\mathrm{3} }\to \Cat .  $$}

\begin{lem}\label{factorizationdescentfacets}
Let $\left( \dd ^{\FFFF ^p  } , \uppsi \right)$ be the universal pair
that gives the lax descent category of $\FFFF ^p$.  
For each morphism $p: e\to b $ of $\CCCCC $, we get a factorization 
\begin{equation}\label{descentfactorizationp}
\xymatrix{
\FFFF (b)
\ar[rd]|-{K_{ p}}
\ar[rr]^-{\FFFF (p)}
&&
\FFFF (e)
\\
&
\laxde \left(\FFFF ^p\right) 
\ar[ru]|-{\dd ^{{ }^{\FFFF ^p}}}
&
}
\end{equation}
\textit{called the $\FFFF$-descent factorization of $\FFFF (p) $}, 
in which  $K_{p}$ is 
the unique functor such that the diagram above is commutative and the equation
%
\pu
%
\pu
\begin{equation*}
\diag{equation_on_the_descent_datum_leftsideforthecaseofpseudofunctor}\quad =\quad \diag{equation_on_the_descent_datum_rightsideforthecaseofthepseudofunctor}
\end{equation*}	
holds.
\end{lem}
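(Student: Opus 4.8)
The plan is to apply the universal property of the lax descent object (item (1) of Definition \ref{definitionoflaxdescent}) to the pair $\left(\FFFF(p),\beta\right)$, where $F := \FFFF(p) : \FFFF(b)\to \FFFF(e) = \FFFF^p(\mathsf{1})$ and $\beta$ is the canonical $\FFFF^p$-descent datum for $F$ built from the pseudofunctoriality constraints $\ffff$ of $\FFFF$. Since $\mathsf{Eq}(p)$ is induced by the kernel pair of $p$, the two projections satisfy $\pi_e\cdot p = \pi^e\cdot p$ as morphisms $e\times_b e\to b$ (where, following the diagrams, $\FFFF^p(d^1) = \FFFF(\pi_e)$ and $\FFFF^p(d^0) = \FFFF(\pi^e)$); hence $\FFFF(\pi_e\cdot p) = \FFFF(\pi^e\cdot p)$, and I set
$$\beta := \ffff_{\pi^e p}^{-1}\cdot \ffff_{\pi_e p} : \FFFF^p(d^1)\circ\FFFF(p)\Longrightarrow \FFFF^p(d^0)\circ\FFFF(p),$$
which is exactly the right-hand side appearing in the statement.

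First I would verify that $\beta$ satisfies the descent associativity equation \eqref{Associativityequationdescent} with respect to $\FFFF^p$. Expanding both sides using the definition of $\beta$ together with the definitions of $\FFFF^p(\sigma_{01}),\FFFF^p(\sigma_{02}),\FFFF^p(\sigma_{12})$ in terms of the constraints $\ffff$ (recall that $\FFFF^p(D^t) = \FFFF(\mathsf{Eq}(p)(D_t))$ are the projections $e\times_b e\times_b e\to e\times_b e$), one sees that each side is a composite of structural $2$-cells of the single pseudofunctor $\FFFF$, with a common source $\FFFF(\rho)\circ\FFFF(p)$ and a common target $\FFFF(\rho')\circ\FFFF(p)$, all intermediate objects being of the form $\FFFF(\text{a projection } e\times_b e\times_b e\to b)$ --- and all these projections coincide because $e\times_b e\times_b e$ lies over $b$. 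Consequently both composites are parallel constraint-$2$-cells of $\FFFF$ between the same source and target, so they agree by the coherence of the pseudofunctor $\FFFF$ (Definition \ref{pseudofunctorcategory2category}, associativity axiom).

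Second, I would check the descent identity equation \eqref{Identityequationdescent}. Here $\FFFF^p(s^0) = \FFFF(\delta)$, where $\delta : e\to e\times_b e$ is the diagonal, and $\pi_e\cdot\delta = \pi^e\cdot\delta = \id_e$. Whiskering $\beta$ by $\FFFF(\delta)$ and composing with the unit constraints collected in $\FFFF^p(\mathfrak{n}_0)$ and $\FFFF^p(\mathfrak{n}_1)$, the required identity reduces, again by the coherence of $\FFFF$ (now its unit axiom, together with the associativity constraints relating $\ffff_{\pi_e p}$, $\ffff_{\pi^e p}$ and $\ffff$ applied to $\delta$), to a triviality. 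Thus $\left(\FFFF(p),\beta\right)$ is a valid descent datum.

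Finally, item (1) of Definition \ref{definitionoflaxdescent} then yields a unique morphism $K_p := \check{F} : \FFFF(b)\to \laxde\left(\FFFF^p\right)$ satisfying $\dd^{\FFFF^p}\circ K_p = \FFFF(p)$ together with the $2$-cell equation of the statement (this being exactly the instance of \eqref{firstequationofF} for $F = \FFFF(p)$ and the above $\beta$), which gives the asserted $\FFFF$-descent factorization \eqref{descentfactorizationp}. The main obstacle is the associativity verification: it is the only place where the full coherence of $\FFFF$ is used, and carrying it out requires unwinding the definitions of the $\FFFF^p(\sigma_{ij})$ and keeping careful track of the several pseudofunctoriality constraints over the triple fibre product, even though conceptually it amounts to nothing more than the fact that any two parallel pastings of constraint cells of $\FFFF$ coincide.
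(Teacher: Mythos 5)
Your proposal is correct and takes essentially the same route as the paper: both identify $\ffff_{\pi^e\, p}^{-1}\cdot \ffff_{\pi_e\, p}$ as the $\FFFF^p$-descent datum for $\FFFF(p)$, justify the descent associativity and identity equations by the pseudofunctoriality of $\FFFF$, and then invoke the universal property of the lax descent object to obtain the unique comparison $K_p$. The paper merely compresses the verification into the remark that it ``follows directly from the fact that $\FFFF$ is a pseudofunctor,'' which is precisely the coherence argument you spell out.
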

\begin{proof}
This factorization can be found, for instance, in \cite[Section~3]{MR1466540} or \cite[Section~8]{2016arXiv160604999L}.
In our context, in order to prove this result, it is enough to 
verify that 
$$\ffff_{\pi^e\, p }^{-1}\cdot \ffff_{\pi_e\, p } : \FFFF^p (d^1)\cdot \FFFF (p)\Rightarrow \FFFF^p (d^0)\cdot \FFFF (p)  $$
is an $\FFFF ^p $-descent datum for $\FFFF (p)$, which follows directly from the fact that
$\FFFF : \CCCCC ^\op\to\Cat $ is a pseudofunctor.
\end{proof}

\begin{defi}[Effective descent morphism]
A morphism $p$ of $\CCCCC $ is of \textit{effective $\FFFF $-descent} if
the comparison $K_p $ of \textit{the $\FFFF$-descent factorization of $\FFFF (p) $} (Eq.~\eqref{descentfactorizationp}) is an equivalence. 
\end{defi}

\begin{rem}
	By definition, if $p$ is of effective $\FFFF $-descent, this means in particular 
	that $\FFFF (p) : \FFFF (b)\to \FFFF ( e) $ is, up to the composition with a canonical equivalence, 
	the forgetful functor between the category of $\FFFF $-internal actions
	of the internal groupoid $\mathsf{Eq}(p) $ and the category of $\FFFF $-internal actions of the underlying discrete groupoid $\overline{e} $.

\end{rem}

\section{Effective descent morphisms and monadicity}\label{sectionfinale}
The celebrated B\'{e}nabou-Roubaud theorem (see \cite{MR0255631} or, for instance, \cite[Theorem~1.4]{2016arXiv160604999L}) gives
an insightful connection between monad theory and descent theory. Namely,
the theorem says that the $\FFFF$-descent factorization of $\FFFF (p) $ (Eq.~\eqref{descentfactorizationp}) coincides up to equivalence with the Eilenberg-Moore
factorization of the right adjoint functor $\FFFF (p ) $,
provided that $\FFFF $ comes from a bifibration satisfying
the so called \textit{Beck-Chevalley condition}.

The theorem motivates what is often called \textit{monadic approach to descent} (\textit{e.g.} \cite[Section~2]{MR1285884}), and it is useful to the characterization of effective descent morphisms
in  several cases of interest (\textit{e.g.} \cite{MR2056587,MR1285884,MR1271335}).

In our context, the B\'{e}nabou-Roubaud theorem can be stated as follows. 
Assume that $\FFFF : \CCCCC^\op\to \Cat $ is a pseudofunctor such that, for every morphism $p$ of $\CCCCC $, 
\begin{itemize}\renewcommand\labelitemi{--}
\item there is an adjunction $(\FFFF (p)!\dashv \FFFF (p), \varepsilon ^p , \eta ^p) :\FFFF (b)\to\FFFF (e) $, and
\item the $2$-cell obtained from the pasting
\begin{equation*}
\xymatrix{
\FFFF (e)
\ar@{=}[dd]
\ar[rr]^-{\FFFF (p)! }
&&
\FFFF (b)
\ar@{{}{ }{}}@/_2pc/[lldd]|-{\xRightarrow{\hskip 1em\eta ^p \hskip 1em }}
\ar[lldd]|-{\FFFF (p) }
\ar[dd]^-{\FFFF (p) }
\\
& &
\\
\FFFF (e) 
\ar@{{}{ }{}}[rr]|-{\xRightarrow{\ffff_{\pi^e\, p }^{-1}\cdot \ffff_{\pi_e\, p } } }
\ar[dd]_-{\FFFF (\pi ^e ) }
&&
\FFFF (e)
\ar@{{}{ }{}}@/^2pc/[lldd]|-{\xRightarrow{\hskip 1em\varepsilon ^{\pi _e } \hskip 1em }}
\ar[lldd]|-{\FFFF (\pi _e) }
\ar@{=}[dd]
\\
&&
\\
\FFFF (e\times _b e)
\ar[rr]_-{\FFFF (\pi _e)!  }
&&
\FFFF (e)
}
\end{equation*}
is invertible. 
\end{itemize}
We have that, denoting by $T^p $ the monad  $\left(\FFFF (p)\cdot\FFFF (p)!, \id _ {\FFFF (p)} \ast \varepsilon ^p\ast \id _ {\FFFF (p)!}, \eta ^p  \right)$, 
the Eilenberg-Moore factorization 
\begin{equation*}
\xymatrix{
\FFFF (b)\ar[rr]|-{\FFFF (p) }
\ar[rd]
&&
\FFFF (e)\\
&
\FFFF (e)^{ T^p}
\ar[ru]
&	
}
\end{equation*}
is \textit{pseudonaturally} equivalent to the $\FFFF$-descent factorization of $\FFFF (p) $ 
(Eq.~\eqref{descentfactorizationp}). In particular, we get that, assuming the 
above, a morphism \textit{$p$ is of effective $\FFFF $-descent if and only if $\FFFF (p) $ 
is monadic}.

\begin{rem}[Basic bifibration]
	If 
	$\CCCCC $ has pullbacks, 
	the basic indexed category $$\CCCCC /- : \CCCCC ^\op\to \Cat  $$
	satisfies the Beck-Chevalley condition. Therefore, in this case,
	by the B\'{e}nabou-Roubaud theorem, one reduces the
	problem of characterization of effective descent morphisms to the problem of characterization of the morphisms $p$ for which
	the change of base functor $\CCCCC /p = p^\ast : \CCCCC  /b \to \CCCCC /e $ is monadic. 
	
	For instance, if $\CCCCC $ is locally cartesian closed and has coequalizers, one can easily prove
	that $\CCCCC /p $ is monadic if and only if $p$ is a universal regular epimorphism, by Beck's (pre)monadicity theorem
	(\textit{e.g.} \cite[Corollary~1.3]{MR1271335}). 
	 This result on locally cartesian closed categories also plays an essential role in the usual
	framework to study effective $\left(\CCCCC /-\right) $-descent morphisms
	of  more general categories via embedding
	results (see, for instance, \cite[Section~1.6]{MR1271335}, \cite[Section~2]{MR1285884} and \cite[Section~1]{2016arXiv160604999L}).
\end{rem}

\subsection{Indexed categories not satisfying the Beck-Chevalley condition}
	The B\'{e}nabou-Roubaud theorem answers the question of the comparison
	of the Eilenberg-Moore factorization with the $\FFFF$-descent factorization of $\FFFF (p) $ (Eq.~\eqref{descentfactorizationp}) 
	whenever $\FFFF $ satisfies the Beck-Chevalley condition.
	One might ask what it is possible to prove in 
	this direction without assuming
	the Beck-Chevalley condition. 
	
	It should be noted that there
	are indexed categories $\FFFF : \CCCCC ^\op\to\Cat $ (coming from bifibrations that do not satisfy the Beck-Chevalley condition) 
	for which there are non-effective descent morphisms inducing monadic functors.

	For instance, in \cite[Example~3.2.3]{Melo} (\textit{Exemplo} 3.2.3 of pag.~67), Melo gives a detailed
	proof that the so called   \textit{fibration
	of points} of the category of groups does not satisfy the Beck-Chevalley condition (in particular, w.r.t. the morphism $0\to \mathsf{S}_3 $). 
It is known that, denoting by $\mathsf{Pt} $ the corresponding 
indexed category, $\mathsf{Pt} (0\to \mathsf{S}_3) $ is monadic but
$0\to \mathsf{S}_3 $ is not of effective $\mathsf{Pt}$-descent.

We can produce examples of non-effective descent morphisms
inducing monadic functors as above, once we observe that:

\begin{prop}\label{exemplo}
	If the domain of a morphism $p$ is the terminal object  of $\CCCCC $, then
$p$ is of effective $\FFFF $-descent if and only if $\FFFF (p)$ is
an equivalence.
\end{prop}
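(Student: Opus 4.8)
The plan is to show that, because the domain $e$ of $p$ is terminal, the internal groupoid $\mathsf{Eq}(p)$ degenerates into the discrete precategory $\overline{e}$. It then follows that the forgetful functor $\dd^{\FFFF^p}$ occurring in the $\FFFF$-descent factorization of Lemma~\ref{factorizationdescentfacets} is itself an equivalence, so that the statement can be read off directly from that factorization.

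First I would verify that $e\times_b e\cong e$. Since $\CCCCC$ has pullbacks and $e$ is terminal, the universal property identifies $\CCCCC(X, e\times_b e)$ with the set of pairs of morphisms $X\to e$ equalized by $p$; as $\CCCCC(X,e)$ is a singleton for every object $X$, this set is a singleton as well, whence $e\times_b e\cong e$ by the Yoneda lemma, and likewise $e\times_b e\times_b e\cong e$. All the face, degeneracy and projection morphisms of $\mathsf{Eq}(p)$ are then endomorphisms of the terminal object, hence identities; consequently $\mathsf{Eq}(p)$ is isomorphic, as a functor $\Delta_{\mathrm{3}}^\op\to\CCCCC$, to the underlying discrete precategory $\overline{e}$.

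Next I would deduce that $\dd^{\FFFF^p}$ is an equivalence. From the previous step, $\FFFF^p=\FFFF\circ\mathsf{Eq}(p)^\op$ is isomorphic to $\FFFF\circ\overline{e}^\op$, so by the lemma asserting that the category of $\FFFF$-internal actions of a discrete precategory $\overline{w}$ is equivalent to $\FFFF(w)$, we have $\laxde\left(\FFFF^p\right)\simeq\FFFF(e)$. To pin down that this equivalence is realized by the forgetful functor itself, I would recall that $\dd^{\FFFF^p}$ is always faithful and conservative, and observe that in the discrete case it is moreover essentially surjective, since every object of $\FFFF(e)$ carries the canonical descent datum coming from the coherence isomorphisms of $\FFFF$, and full, since the compatibility condition defining a morphism of descent data reduces to the naturality of those canonical isomorphisms. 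Hence $\dd^{\FFFF^p}\colon\laxde\left(\FFFF^p\right)\to\FFFF(e)$ is an equivalence.

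Finally, Lemma~\ref{factorizationdescentfacets} gives $\FFFF(p)=\dd^{\FFFF^p}\circ K_p$ with $\dd^{\FFFF^p}$ an equivalence; since composing with an equivalence both preserves and reflects the property of being an equivalence, $K_p$ is an equivalence if and only if $\FFFF(p)$ is. As $p$ is of effective $\FFFF$-descent exactly when $K_p$ is an equivalence, this establishes the claim. The only delicate point is the middle step, namely checking that the abstract equivalence furnished by the discrete-precategory lemma is concretely implemented by $\dd^{\FFFF^p}$; but this is immediate once the trivial descent data are identified, so no genuine obstacle arises.
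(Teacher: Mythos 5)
Your proof is correct and takes essentially the same route as the paper: observe that $\mathsf{Eq}(p)$ is the discrete precategory constant at the terminal object, conclude that $\laxde\left(\FFFF^p\right)\simeq\FFFF(e)$ via the forgetful functor, and read the result off the $\FFFF$-descent factorization of Lemma~\ref{factorizationdescentfacets}. The only difference is that you explicitly verify what the paper's diagram asserts implicitly (the $\simeq$ label on the forgetful arrow), namely that the equivalence with $\FFFF(e)$ is realized by $\dd^{\FFFF^p}$ itself.
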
 
\begin{proof}
	Indeed, if the domain of $p$ is the terminal object $1$ of $\CCCCC $, 	
		$\mathsf{Eq}(p)$ is discrete, naturally isomorphic to the 
	precategory $\Delta_{\mathrm{3}} ^\op\to \CCCCC $ constantly 
	equal to $1 $. Consequently, 
	$$\FFFF\textrm{-}\textrm{IntAct}\left(\mathsf{Eq}(p)\right)
	\simeq \FFFF ( 1 ).
	$$	
	Therefore the result follows, since the $\FFFF$-descent factorization of $\FFFF (p) $, in this case, is given by
	\begin{equation*}
	\xymatrix{\FFFF (b)
	\ar[rd]|-{K_{ p}}
	\ar[rr]^-{\FFFF (p)}
	&&
	\FFFF (1)
	\\
	&
	\FFFF\textrm{-}\textrm{IntAct}\left(\mathsf{Eq}(p)\right)
	\ar[ru]|-{\simeq }
	&
	}
	\end{equation*}
\end{proof}	

\begin{rem}
	Proposition \ref{exemplo} gives us a way to study \cite[Example~3.2.3]{Melo}. 
	
	In an \textit{exact protomodular} category (\textit{e.g.} \cite{MR1615341}), denoting again by $\mathsf{Pt} $
	the indexed category corresponding to the fibration of points, whenever $\mathsf{Pt} (p) $ has a left adjoint, it is monadic (see \cite[Theorem~3.4]{MR1615341}).

	In the case of the category of groups, 
	$\mathsf{Pt} (0\to \mathsf{S}_3) $ has a left adjoint but it is not an equivalence. 
	
	Therefore, by  \cite[Theorem~3.4]{MR1615341}, the functor $\mathsf{Pt} (0\to \mathsf{S}_3) $ is monadic, while, by Proposition \ref{exemplo}, the morphism $0\to \mathsf{S}_3$ is not of effective $\mathsf{Pt} $-descent.
\end{rem}

\begin{rem}
It should be noted that, if $p: 1\to b$ is a morphism of $\CCCCC $ satisfying the hypothesis of 
Proposition \ref{exemplo}, the pasting
\begin{equation*}
\xymatrix@=1.5em{
	\FFFF (1)
	\ar@{=}[dd]
	\ar[rr]^-{\FFFF (p)! }
	&&
	\FFFF (b)
	\ar@{{}{ }{}}@/_1.8pc/[lldd]|-{\xRightarrow{\hskip 1em\eta ^p \hskip 1em }}
	\ar[lldd]|-{\FFFF (p) }
	\ar[dd]^-{\FFFF (p) }
	\\
	& &
	\\
	\FFFF (1) 
	\ar@{{}{ }{}}[rr]|-{= }
	\ar[dd]_-{\FFFF (\pi ^1 ) =  \id_ {\FFFF (1)} }
	&&
	\FFFF (1)
	\ar@{{}{ }{}}@/^2.2pc/[lldd]|-{=}
	\ar[lldd]|-{\FFFF (\pi _1) = \id_ {\FFFF (1)}  }
	\ar@{=}[dd]
	\\
	&&
	\\
	\FFFF (1\times _b 1) = \FFFF (1)
	\ar[rr]_-{\id_ {\FFFF (1)} }
	&&
	\FFFF (1)
}
\end{equation*}
is invertible if and only if $\eta ^p $ is invertible.
(or, equivalently, $\FFFF (p)! $ is fully faithful).
In other words, \textit{$p: 1\to b $ satisfies the Beck-Chevalley condition w.r.t.
	$\FFFF $ if and only if $\FFFF (p)! $ is fully faithful}.

Assuming that $\FFFF (p)! $ is fully faithful in the situation above, we get that
$\FFFF (p) $ is (pre)monadic if and only if it is an equivalence. That is to say, in this case, we get, by Proposition \ref{exemplo}, that \textit{$\FFFF (p) $ is (pre)monadic if and only if $p$ is of
effective $\FFFF $-descent}. 
\end{rem}

The most elementary examples of non-effective $\FFFF $-descent morphisms
inducing monadic functors can be constructed from Lemma \ref{exemplo2}.
Namely, in order to get our desired example, it is enough to 
consider a pseudofunctor $\GGGG : \mathsf{2} ^\op\to \Cat $ 
whose image of $d $ is a monadic functor which is not an equivalence.
In this case, by Lemma \ref{exemplo2}, we conclude that,
despite $\GGGG (d) $ being monadic, 
$d$ is not of effective $\GGGG $-descent.

\begin{lem}\label{exemplo2}
	Consider the category $\mathsf{2} $ with the only non-trivial morphism
	$d: \mathsf{0}\to\mathsf{1} $. Given a pseudofunctor $\GGGG :\mathsf{2} ^\op\to\Cat $, $d$ is of  effective $\GGGG$-descent if and only if
	$\GGGG (d) $ is an equivalence.
\end{lem}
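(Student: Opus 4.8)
The plan is to reduce the statement to a computation of the kernel-pair precategory $\mathsf{Eq}(d)$, following exactly the strategy of the proof of Proposition~\ref{exemplo}. The only genuinely new ingredient is the observation that $\mathsf{2}$, being a poset (a thin category), has at most one morphism between any two objects, so that \emph{every} morphism of $\mathsf{2}$ is a monomorphism; in particular $d\colon\mathsf{0}\to\mathsf{1}$ is monic, which forces its kernel pair to collapse.

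First I would compute $\mathsf{Eq}(d)$. Since $d$ is a monomorphism, the pullback $\mathsf{0}\times_{\mathsf{1}}\mathsf{0}$ is $\mathsf{0}$ with both projections equal to $\id_{\mathsf{0}}$, and likewise the triple fibre product $\mathsf{0}\times_{\mathsf{1}}\mathsf{0}\times_{\mathsf{1}}\mathsf{0}$ is $\mathsf{0}$; consequently all the structure morphisms (projections, diagonal, and composition) of $\mathsf{Eq}(d)$ are identities. Thus $\mathsf{Eq}(d)$ is naturally isomorphic to the precategory $\Delta_{\mathrm{3}}^\op\to\mathsf{2}$ constantly equal to $\mathsf{0}$, i.e.\ $\mathsf{Eq}(d)$ is discrete and $\mathsf{Eq}(d)\cong\overline{\mathsf{0}}$. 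Invoking the lemma on the $\GGGG$-internal actions of a discrete precategory, we then obtain
\[
\laxde\left(\GGGG^d\right)=\GGGG\textrm{-}\textrm{IntAct}\left(\mathsf{Eq}(d)\right)\simeq\GGGG(\mathsf{0}),
\]
where, just as in the factorization displayed in the proof of Proposition~\ref{exemplo}, the forgetful functor $\dd^{\GGGG^d}\colon\laxde\left(\GGGG^d\right)\to\GGGG(\mathsf{0})$ is (naturally isomorphic to) the identity, hence an equivalence: on a discrete precategory the descent datum is uniquely forced to be the identity, so $\dd^{\GGGG^d}$ is fully faithful and essentially surjective.

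Finally, the $\GGGG$-descent factorization of $\GGGG(d)$ provided by Lemma~\ref{factorizationdescentfacets} yields the commutative triangle $\dd^{\GGGG^d}\circ K_d=\GGGG(d)$. Since $\dd^{\GGGG^d}$ is an equivalence, the two-out-of-three property for equivalences shows that $K_d$ is an equivalence if and only if $\GGGG(d)$ is an equivalence; and $d$ is of effective $\GGGG$-descent precisely when $K_d$ is an equivalence, which gives the claim. I do not anticipate any serious obstacle: the proof is a direct codual analogue of Proposition~\ref{exemplo}, and the only point requiring care is verifying that the equivalence $\laxde(\GGGG^d)\simeq\GGGG(\mathsf{0})$ identifies the forgetful functor $\dd^{\GGGG^d}$ with an equivalence, which is exactly the content of the discrete-precategory lemma together with the explicit description of the forgetful functor.
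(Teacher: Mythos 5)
Your proof is correct and takes essentially the same route as the paper's: the paper's own (very terse) proof just observes that $\mathsf{Eq}(d)$ is discrete, so that $\GGGG\textrm{-}\textrm{IntAct}\left(\mathsf{Eq}(d)\right)\simeq\GGGG(\mathsf{0})$, and concludes via the descent factorization exactly as in Proposition~\ref{exemplo}. Your extra details (every morphism of the poset $\mathsf{2}$ is monic, hence the kernel pair collapses; the forgetful functor is itself the equivalence; two-out-of-three) are precisely the steps the paper leaves implicit.
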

\begin{proof}
	Again, in this case, $\mathsf{Eq}(d)$ is discrete. We have that
	$\GGGG\textrm{-}\textrm{IntAct}\left(\mathsf{Eq}(d)\right)
	\simeq \GGGG ( \mathsf{0}  )$,	
	and, hence, we get the result.
\end{proof}

	In \cite{MR2107401}, Sobral characterizes the effective $\mathcal{E}$-descent morphisms in the category $\cat $ of small categories, in which
$\mathcal{E}: \cat ^\op\to\Cat $ can be defined by
\begin{eqnarray}\label{definitionofEdiscreteopfibrations}
\mathcal{E}: &\cat ^\op &\to \Cat \nonumber\\
&e  & \mapsto \Cat \left[  e, \Set \right]\\
& p: e\to b & \mapsto \Cat \left[  p, \Set \right]: \Cat \left[  b, \Set \right]\to \Cat \left[  e, \Set \right].\nonumber
\end{eqnarray}	
As a consequence of her characterization, she shows that the functor $h: \mathsf{1}\sqcup \mathsf{1}\to \mathsf{2}$ which is a bijection on objects
(that is to say, induced by $d^0$ and $d^1$) 
is not an effective $\mathcal{E}$-descent morphism, but 
$ \mathcal{E}(h) $ is monadic
 (see \cite[Remark~7]{MR2107401}). In this context, she also \textit{informally} suggests that,
	for the indexed category $\mathcal{E} $, 
	\textit{descent gives ``more information'' than monadicity}.
	We finish this article showing, as an immediate consequence of Theorem \ref{MONADIC}, that this is in fact the case for
	any indexed category. 
	
\begin{theo}[Effective descent implies monadicity]\label{consequenceofthemaintheoreminthecaseofGrothendieckdescent}
	Let $\FFFF : \CCCCC ^\op\to \Cat $ be any pseudofunctor. If $p$
	is of effective $\FFFF $-descent and $\FFFF (p) $ has a left adjoint,
	then $\FFFF (p) $ is monadic.
\end{theo}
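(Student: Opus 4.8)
The plan is to observe that the statement is an immediate consequence of Theorem \ref{MONADIC} once the $\FFFF$-descent factorization of Lemma \ref{factorizationdescentfacets} is invoked; essentially all the real work has already been carried out in establishing Theorem \ref{maintheorem} (and hence Theorem \ref{MONADIC}).

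First I would recall that, by the \emph{definition of effective $\FFFF$-descent morphism}, saying that $p$ is of effective $\FFFF$-descent means precisely that the comparison functor $K_p$ occurring in the $\FFFF$-descent factorization of $\FFFF(p)$ (Eq.~\eqref{descentfactorizationp}) is an equivalence. By Lemma \ref{factorizationdescentfacets}, that factorization reads
$$\FFFF(p)=\dd^{\FFFF^p}\circ K_p,$$
where $\dd^{\FFFF^p}\colon \laxde\left(\FFFF^p\right)\to\FFFF(e)$ is exactly the forgetful functor of descent data associated with the truncated pseudocosimplicial object $\FFFF^p=\FFFF\circ\mathsf{Eq}(p)^\op\colon\Delta_{\mathrm{3}}\to\Cat$.

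Then I would apply Theorem \ref{MONADIC} with the pseudofunctor $\AAAA=\FFFF^p$, the functor $G=\FFFF(p)$, and the equivalence $\mathcal{K}=K_p$. By hypothesis $\FFFF(p)$ has a left adjoint, and by effectiveness of $p$ the comparison $K_p$ is an equivalence; thus $\FFFF(p)$ is precisely a forgetful morphism of descent data precomposed with an equivalence, which is the shape required by Theorem \ref{MONADIC}. We therefore conclude that $\FFFF(p)$ is monadic.

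I do not expect any genuine obstacle: the substance of the result lives entirely in Theorem \ref{MONADIC} (which itself rests on Theorem \ref{maintheorem} and Beck's monadicity theorem) and in the identification of the $\FFFF$-descent factorization in Lemma \ref{factorizationdescentfacets}. The only point that warrants a moment of care is matching the order of composition, namely recognising that the factorization exhibits $\FFFF(p)$ as $\dd^{\FFFF^p}$ \emph{precomposed} with the comparison equivalence $K_p$, which is exactly the form $\dd^\AAAA\circ\mathcal{K}$ demanded by the hypothesis of Theorem \ref{MONADIC}.
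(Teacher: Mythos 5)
Your proposal is correct and follows exactly the paper's own route: the paper proves this theorem by observing it is "clearly a particular case of Theorem \ref{MONADIC}", which is precisely your argument once the factorization $\FFFF(p)=\dd^{\FFFF^p}\circ K_p$ from Lemma \ref{factorizationdescentfacets} and the definition of effective $\FFFF$-descent are unwound. You have merely made explicit the instantiation $\AAAA=\FFFF^p$, $G=\FFFF(p)$, $\mathcal{K}=K_p$ that the paper leaves implicit.
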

\begin{proof}
	It is clearly a particular case of Theorem \ref{MONADIC}.
\end{proof} 

By Theorem \ref{consequenceofthemaintheoreminthecaseofGrothendieckdescent}, given a pseudofunctor $\FFFF : \CCCCC ^\op\to \Cat $ coming from a bifibration, every effective $\FFFF $-descent morphism $p$ induces 
a monadic functor $\FFFF (p) $.

\begin{rem}\label{JanelidzeExample}
It is worth noting that there are pairs $ \left(\FFFF , p\right)  $ such that  $\FFFF $ is a pseudofunctor coming 
from a bifibration and $p$ is an $\FFFF$-descent morphism  not satisfying the Beck-Chevalley condition. 

For instance, considering the pseudofunctor $\mathcal{E}$ defined in \eqref{definitionofEdiscreteopfibrations}, 
the functor 
$$\underline{h}:\mathsf{1}\sqcup \mathsf{1}\to \mathsf{1} $$
is an effective $\mathcal{E}$-descent morphism, since it is a split epimorphism (by  \cite[Theorem~3.5]{MR1466540}). However $\underline{h}$ does not satisfy the Beck-Chevalley condition.
For instance, taking $f: \mathsf{1}\sqcup \mathsf{1}\to \Set $ defined by the pair $(\emptyset, \left\{\emptyset \right\} ) $, we get that 
$$ \lan _{\pi _{\mathsf{1}\sqcup \mathsf{1}} } \left(f\circ \pi^{\mathsf{1}\sqcup \mathsf{1}} \right) : \mathsf{1}\sqcup \mathsf{1}\to \Set
$$
is defined by the pair $(\emptyset, \left\{\emptyset \right\}\sqcup\left\{\emptyset \right\}  ) $, while 
$$\lan_{\underline{h}} \left( f \right) \circ \underline{h} :
\mathsf{1}\sqcup \mathsf{1}\to \Set
$$
is defined by the pair $\left(\left\{\emptyset \right\} ,  \left\{\emptyset \right\} \right)$.

\end{rem}

\section*{Acknowledgments}
I am very grateful to the \textit{Software Technology Research Group} at Utrecht University for the welcoming, supportive, and inspiring environment. Being part of this group has positively influenced me towards working in fundamental and applied research.

I am also grateful to the \textit{Coimbra Category Theory Group} for their   support and insight.
I am specially thankful to Manuela Sobral for our fruitful discussions on descent theory during my research fellowship at the Centre for Mathematics, University of Coimbra, in 2019.  She kindly taught me about the examples given in  \cite[Remark~7]{MR2107401} and \cite[Example~3.2.3]{Melo}
of non-effective descent morphisms inducing monadic functors.

George Janelidze warmly hosted me at the University of Cape Town in Nov/2019, where I started my first revision of this paper. On that occasion, he kindly gave the example for which I was looking, described in Remark \ref{JanelidzeExample}.

This work is part of a project that started during my postdoctoral fellowship at 
the \textit{Centre for Mathematics of the University of Coimbra} and, more particularly, during my research visit at the \textit{Universit\'{e} catholique de Louvain} in May 2018.

I thank Tim Van der Linden for being attentive and considerate while working as the editor of this paper. Finally, I thank the anonymous referee who read the article in detail and came up with regardful and helpful comments and suggestions.

\bibliographystyle{plain}
\bibliography{references}

\begin{thebibliography}{10}

\bibitem{MR1987896}
J.M. Beck.
\newblock Triples, algebras and cohomology.
\newblock {\em Repr. Theory Appl. Categ.}, TAC(2):1--59, 2003.

\bibitem{MR0220789}
J.~B\'enabou.
\newblock Introduction to bicategories.
\newblock In {\em Reports of the {M}idwest {C}ategory {S}eminar}, pages 1--77.
  Springer, Berlin, 1967.

\bibitem{MR0255631}
J.~B\'enabou and J.~Roubaud.
\newblock Monades et descente.
\newblock {\em C. R. Acad. Sci. Paris S\'er. A-B}, 270:A96--A98, 1970.

\bibitem{MR1615341}
D.~Bourn and G.~Janelidze.
\newblock Protomodularity, descent, and semidirect products.
\newblock {\em Theory Appl. Categ.}, 4:No. 2, 37--46, 1998.

\bibitem{MR1453515}
R.~Brown and G.~Janelidze.
\newblock Van {K}ampen theorems for categories of covering morphisms in
  lextensive categories.
\newblock {\em J. Pure Appl. Algebra}, 119(3):255--263, 1997.

\bibitem{MR0280560}
E.~Dubuc.
\newblock {\em Kan extensions in enriched category theory}.
\newblock Lecture Notes in Mathematics, Vol. 145. Springer-Verlag, Berlin-New
  York, 1970.

\bibitem{MR0354651}
A.~Grothendieck.
\newblock {\em Rev\^{e}tements \'{e}tales et groupe fondamental}.
\newblock Lecture Notes in Mathematics, Vol. 224. Springer-Verlag, Berlin-New
  York, 1971.
\newblock S\'{e}minaire de G\'{e}om\'{e}trie Alg\'{e}brique du Bois Marie
  1960--1961 (SGA 1), Dirig\'{e} par Alexandre Grothendieck. Augment\'{e} de
  deux expos\'{e}s de M. Raynaud.

\bibitem{MR2073649}
G.~Janelidze.
\newblock Categorical {G}alois theory: revision and some recent developments.
\newblock In {\em Galois connections and applications}, volume 565 of {\em
  Math. Appl.}, pages 139--171. Kluwer Acad. Publ., Dordrecht, 2004.

\bibitem{MR2056587}
G.~Janelidze, M.~Sobral, and W.~Tholen.
\newblock Beyond {B}arr exactness: effective descent morphisms.
\newblock In {\em Categorical foundations}, volume~97 of {\em Encyclopedia
  Math. Appl.}, pages 359--405. Cambridge Univ. Press, Cambridge, 2004.

\bibitem{MR1285884}
G.~Janelidze and W.~Tholen.
\newblock Facets of descent. {I}.
\newblock {\em Appl. Categ. Structures}, 2(3):245--281, 1994.

\bibitem{MR1466540}
G.~Janelidze and W.~Tholen.
\newblock Facets of descent. {II}.
\newblock {\em Appl. Categ. Structures}, 5(3):229--248, 1997.

\bibitem{MR0470019}
P.T. Johnstone.
\newblock {\em Topos theory}.
\newblock Academic Press [Harcourt Brace Jovanovich, Publishers], London-New
  York, 1977.
\newblock London Mathematical Society Monographs, Vol. 10.

\bibitem{MR0357542}
G.M. Kelly and R.~Street.
\newblock Review of the elements of {$2$}-categories.
\newblock {\em Category {S}eminar ({P}roc. {S}em., {S}ydney, 1972/1973)}, pages
  75--103. Lecture Notes in Math., Vol. 420, 1974.

\bibitem{MR1935980}
S.~Lack.
\newblock Codescent objects and coherence.
\newblock {\em J. Pure Appl. Algebra}, 175(1-3):223--241, 2002.
\newblock Special volume celebrating the 70th birthday of Professor Max Kelly.

\bibitem{MR3491845}
F.~Lucatelli~Nunes.
\newblock On biadjoint triangles.
\newblock {\em Theory Appl. Categ.}, 31:No. 9, 217--256, 2016.

\bibitem{2016arXiv160703087L}
F.~Lucatelli~Nunes.
\newblock On lifting of biadjoints and lax algebras.
\newblock {\em Categories and General Algebraic Structures with Applications},
  9(1):29--58, 2018.

\bibitem{2016arXiv160604999L}
F.~Lucatelli~Nunes.
\newblock Pseudo-{K}an extensions and descent theory.
\newblock {\em Theory Appl. Categ.}, 33:No. 15, 390--444, 2018.

\bibitem{arXiv:1711.02051}
F.~{Lucatelli Nunes}.
\newblock Pseudoalgebras and non-canonical isomorphisms.
\newblock {\em Appl. Categ. Structures}, 27(1):55--63, 2019.

\bibitem{2019arXiv190201225L}
F.~{Lucatelli Nunes}.
\newblock {Semantic Factorization and Descent}.
\newblock 2019.
\newblock arXiv: 1902.01225.

\bibitem{MR2056584}
J.~MacDonald and M.~Sobral.
\newblock Aspects of monads.
\newblock In {\em Categorical foundations}, volume~97 of {\em Encyclopedia
  Math. Appl.}, pages 213--268. Cambridge Univ. Press, Cambridge, 2004.

\bibitem{Melo}
M.~Melo.
\newblock Monadicidade e descida -- da fibra\c{c}\~{a}o b\'{a}sica \`{a}
  fibra\c{c}\~{a}o dos pontos.
\newblock Master's thesis, University of Coimbra, Portugal, 2004.

\bibitem{MR1040947}
A.J. Power.
\newblock A {$2$}-categorical pasting theorem.
\newblock {\em J. Algebra}, 129(2):439--445, 1990.

\bibitem{MR1271335}
J.~Reiterman and W.~Tholen.
\newblock Effective descent maps of topological spaces.
\newblock {\em Topology Appl.}, 57(1):53--69, 1994.

\bibitem{MR2107401}
M.~Sobral.
\newblock Descent for discrete (co)fibrations.
\newblock {\em Appl. Categ. Structures}, 12(5-6):527--535, 2004.

\bibitem{MR0299653}
R.~Street.
\newblock The formal theory of monads.
\newblock {\em J. Pure Appl. Algebra}, 2(2):149--168, 1972.

\bibitem{MR0401868}
R.~Street.
\newblock Limits indexed by category-valued {$2$}-functors.
\newblock {\em J. Pure Appl. Algebra}, 8(2):149--181, 1976.

\bibitem{MR2107402}
R.~Street.
\newblock Categorical and combinatorial aspects of descent theory.
\newblock {\em Appl. Categ. Structures}, 12(5-6):537--576, 2004.

\bibitem{MR0463261}
R.~Street and R.~Walters.
\newblock Yoneda structures on 2-categories.
\newblock {\em J. Algebra}, 50(2):350--379, 1978.

\end{thebibliography}

\pu

\end{document}
